\documentclass[11pt]{amsart}

\usepackage[dvipsnames]{xcolor}

\usepackage[
		backend=biber,
		style=alphabetic,
		doi=false,
		isbn=false,
		url=false
	]{biblatex}
\DeclareFieldFormat[
		article,
		book,
		inbook,
		incollection,
		inproceedings,
		patent,
		thesis,
		unpublished
	]{title}{\emph{#1\isdot}}
\DeclareFieldFormat{journaltitle}{#1\isdot}
\AtEveryBibitem{\clearfield{note}}

\usepackage{amssymb, mathrsfs, mathtools, braket, mleftright}

\usepackage{tikz}
\usetikzlibrary{cd}

\usepackage[
		colorlinks,
		linkcolor=Maroon,
		citecolor=ForestGreen,
		urlcolor=MidnightBlue
	]{hyperref}

\usepackage[nameinlink]{cleveref}

\crefname{section}{Section}{Sections}
\crefname{subsection}{Section}{Sections}
\crefname{subsubsection}{Section}{Sections}

\makeatletter
\renewcommand{\subsubsection}{%
	\@startsection{subsubsection}{3}{\z@}%
		{.5\linespacing\@plus.7\linespacing}{-.5em}%
		{\normalfont\bfseries}}
\makeatother

\newtheorem{thm}{Theorem}[section]
\crefname{thm}{Theorem}{Theorems}
\newtheorem{prop}[thm]{Proposition}
\crefname{prop}{Proposition}{Propositions}
\newtheorem{lem}[thm]{Lemma}
\crefname{lem}{Lemma}{Lemmas}
\newtheorem{cor}[thm]{Corollary}
\crefname{cor}{Corollary}{Corollaries}

\crefname{conj}{Conjecture}{Conjectures}

\crefname{ques}{Question}{Questions}

\theoremstyle{definition}
\newtheorem{defi}[thm]{Definition}
\crefname{defi}{Definition}{Definitions}
\newtheorem{setting}[thm]{Setting}
\crefname{setting}{Setting}{Settings}
\newtheorem{exa}[thm]{Example}
\crefname{exa}{Example}{Examples}

\crefname{claim}{Claim}{Claims}

\theoremstyle{remark}
\newtheorem{rem}[thm]{Remark}
\crefname{rem}{Remark}{Remarks}

\crefname{enumi}{}{}
\creflabelformat{enumi}{#2(#1)#3}

\crefname{enumii}{}{}
\creflabelformat{enumii}{#2(#1)#3}

\numberwithin{equation}{section}

\newcommand{\longhookrightarrow}{\lhook\joinrel\longrightarrow}

\newcommand{\abs}[1]{\lvert #1\rvert}
\newcommand{\card}{\abs}
\newcommand{\bigcard}[1]{\bigl\lvert #1\bigr\rvert}
\newcommand{\innprod}[2]{\langle #1,#2\rangle}
\newcommand{\isom}{\cong}
\newcommand{\isomarrow}{\xrightarrow{\;\sim\;}}
\newcommand{\lineq}{\sim}

\newcommand{\NEb}{\overline{\mathrm{NE}}}

\newcommand{\restr}[2]{#1\rvert_{#2}}
\newcommand{\restrdiv}[2]{#1\rvert_{#2}}

\newcommand{\PP}{\mathbb{P}}
\renewcommand{\AA}{\mathbb{A}}
\newcommand{\ZZ}{\mathbb{Z}}
\newcommand{\RR}{\mathbb{R}}
\newcommand{\RRp}{\RR_{\ge0}}
\newcommand{\QQ}{\mathbb{Q}}
\newcommand{\CC}{\mathbb{C}}
\newcommand{\Ob}[1]{\mathcal{O}_{#1}}
\newcommand{\Ic}{\mathcal{I}}

\DeclareMathOperator{\Cone}{Cone}
\DeclareMathOperator{\Bl}{Bl}
\DeclareMathOperator{\codim}{codim}

\DeclareMathOperator{\Div}{div}
\DeclareMathOperator{\Spec}{Spec}
\DeclareMathOperator{\Proj}{Proj}
\DeclareMathOperator{\Exc}{Exc}

\DeclareMathOperator{\relint}{rel\,int}
\DeclareMathOperator{\Hom}{Hom}
\DeclareMathOperator{\Ker}{Ker}

\newcommand{\sset}[2]{\set{#1|#2}}

\title
	{Relative cone of curves and extremal contractions of a successive blowup}
\author{Yuto Masamura}
\address{Graduate School of Mathematical Sciences, the University of Tokyo,\\
3-8-1 Komaba, Meguro-ku, Tokyo 153-8914, Japan}
\email{masamura@ms.u-tokyo.ac.jp}
\subjclass[2020]{Primary 14E30; Secondary 14M25, 14C20, 14E05}
\keywords{cone of curves, extremal contraction, blowup}

\begin{document}

\begin{abstract}
	Let $X$ be a normal variety, and let $\pi\colon\tilde X\to X$ be the successive blowup along subvarieties $Z_1,\dotsc,Z_n\subseteq X$ of codimension at least two that have simple normal crossings and satisfy $Z_h\not\supseteq Z_i$ whenever $h<i$.
	We prove that the relative cone of curves $\NEb(\tilde X/X)$ is generated by the classes of finitely many \emph{elementary curves}, and that every face admits a contraction over $X$.
	We describe the exceptional loci of extremal ray contractions, and prove that every small extremal ray contraction admits a $D$-flip for every $\RR$-Cartier divisor $D$ negative on the corresponding ray.
\end{abstract}

\maketitle

\tableofcontents

\section{Introduction}\label{sec:intro}

The cone of curves describes the numerical classes of effective curves on a variety and is a basic tool for studying contractions in birational geometry.
For a klt pair $(X,B)$ projective over a base $S$ in characteristic zero, the cone and contraction theorems give precise control of the $(K_X+B)$-negative region of the relative cone $\NEb(X/S)$.
The cone is locally rational polyhedral in this region, and every $(K_X+B)$-negative extremal face admits a contraction.
The minimal model program proceeds by contracting $(K_X+B)$-negative extremal rays and, when a contraction is small, replacing it by a flip \cite{KollarMori}.
These results, however, do not describe the entire relative cone, which need not be polyhedral outside the $(K_X+B)$-negative region.
Moreover, in positive characteristic, cone and contraction theorems for klt pairs are not known in all dimensions.
It is therefore natural to study classes of projective morphisms for which, in arbitrary characteristic, one can describe the entire relative cone, contract every face, and construct flips for all small extremal ray contractions.
The toric setting provides the basic model.
For smooth projective toric varieties, Batyrev described the cone of curves in terms of primitive collections \cite{Batyrev}, and Fujino and Sato developed relative toric Mori theory in arbitrary characteristic \cite{FujinoSato}.

In this paper, we study the relative birational geometry of the following class of successive blowups.
Let $X$ be a normal variety in arbitrary characteristic, and let $Z_1,\dotsc,Z_n\subseteq X$ be subvarieties with simple normal crossings (\cref{def:snc}).
Assume that $c_i\coloneq\codim_X Z_i\ge2$ for every $i$ and that $Z_h\not\supseteq Z_i$ whenever $h<i$.
Starting with $X^{(0)}\coloneq X$, let $Z_i^{(i-1)}\subseteq X^{(i-1)}$ be the strict transform of $Z_i$ and define successively
\[
	X^{(i)}\coloneq\Bl_{Z_i^{(i-1)}}X^{(i-1)}\qquad(1\le i\le n).
\]
Set $\tilde X\coloneq X^{(n)}$, and write $\pi\colon\tilde X\to X$ for the composite morphism.

For a single blowup $\tilde X=\Bl_ZX$, the relative cone of curves $\NEb(\tilde X/X)$ is the ray generated by the class of a line in an exceptional fiber.
For two centers, $\NEb(\tilde X/X)$ always has two extremal rays, but the corresponding contractions can be divisorial or small, depending on how the centers meet.
For example, if the centers are disjoint, the contractions of both rays are divisorial blowdowns.
If $Z_1\cap Z_2$ is nonempty and every irreducible component $Z\subseteq Z_1\cap Z_2$ has positive excess, meaning that
\[
	c_1+c_2-\codim_X Z>0,
\]
then the contraction other than the blowdown $\tilde X\to X^{(1)}$ is small if and only if $Z_1\not\subseteq Z_2$.
Such small contractions already appear in Kawamata's four-dimensional examples \cite{Kawamata}.
Tsukioka extended this construction to arbitrary smooth projective varieties of dimension at least four \cite{Tsukioka}, and Yoshida and the author later treated more general centers and intersection patterns \cite{MasamuraYoshida}.
These constructions are all carried out over $\CC$.
The present paper extends this two-center picture to an arbitrary number of centers with simple normal crossings, in arbitrary characteristic.
We determine the structure of the relative cone of curves $\NEb(\tilde X/X)$ and study the contractions of its faces and the flips of its small extremal ray contractions.

Successive blowups also arise in other contexts, such as the construction of the wonderful models of De Concini--Procesi and Li, where the centers form a building set \cite{DCP,LiLi}.
We do not impose this additional condition on $Z_1,\dotsc,Z_n$.

To formulate our cone theorem, we introduce a class of $\pi$-contracted curves defined in terms of their intersection numbers with the exceptional divisors.
For each $1\le i\le n$, let $E_i$ denote the pullback to $\tilde X$ of the exceptional divisor of the $i$-th blowup.
For a $\pi$-contracted curve $C\subseteq\tilde X$, consider its intersection vector
\[
	(C_1,\dotsc,C_n),\qquad C_i\coloneq E_i\cdot C.
\]
This vector determines the relative numerical class $[C]\in N_1(\tilde X/X)$.
Let $i_0$ be the smallest index for which $C_{i_0}\ne0$, and call $i_0$ the \emph{source} of $C$.
We say that $C$ is \emph{elementary} if $C_{i_0}=-1$ and
\[
	-\bigcard{\sset{h<i}{C_h=1}}\le C_i\le1\qquad(i>i_0).
\]
See \cref{ss:tower} for further details.
The condition $C_{i_0}=-1$ means geometrically that $C$ maps birationally onto a line in a fiber of the $i_0$-th blowup $X^{(i_0)}\to X^{(i_0-1)}$.
Only finitely many integer vectors satisfy these numerical conditions, although some of these vectors may not be represented by curves on $\tilde X$.

\begin{thm}[{\cref{thm:cone}}]\label{introthm:cone}
	Let $X$ be a normal variety over an algebraically closed field, and let $Z_1,\dotsc,Z_n\subseteq X$ be subvarieties of codimension at least two. Assume that $Z_1,\dotsc,Z_n$ have simple normal crossings and that $Z_h\not\supseteq Z_i$ whenever $h<i$.

	Then the relative cone of curves $\NEb(\tilde X/X)$ is generated by the classes of finitely many elementary curves. In particular, every extremal ray is spanned by the class of an elementary curve.
\end{thm}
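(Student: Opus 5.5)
We argue by induction on the number $n$ of centers, using the factorization $\pi=\pi_{n-1}\circ\beta_n$, where $\beta_n\colon\tilde X=X^{(n)}\to X^{(n-1)}$ is the last blowup and $\pi_{n-1}\colon X^{(n-1)}\to X$. Since $N_1(\tilde X/X)$ is identified with $\RR^n$ through the intersection vector $(C_1,\dotsc,C_n)$, it suffices to show that the intersection vector of every $\pi$-contracted irreducible curve $C$ is a nonnegative combination of vectors of elementary curves. Finitely many integer vectors satisfy the elementary inequalities, so the generated cone is automatically rational polyhedral and closed, and $\NEb(\tilde X/X)$ is generated by them. The claim about extremal rays then follows.

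The first step is to handle curves contracted by $\beta_n$. The strict transform $Z_n^{(n-1)}$ is again a blowup of $Z_n$ along snc-type centers, hence smooth where it matters, so the exceptional divisor of $\beta_n$ is a projective bundle over it. A $\beta_n$-contracted curve is numerically a positive multiple of a line $\ell$ in a fiber. This line has $C_i=0$ for $i<n$ and $C_n=-1$, so it is elementary with source $n$.

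In the second step, for $C$ not contracted by $\beta_n$, let $C'=\beta_n(C)$. By induction, $[C']$ is a nonnegative combination of elementary curves $\Gamma'_j$ on $X^{(n-1)}$ with respect to $Z_1,\dotsc,Z_{n-1}$. We lift each $\Gamma'_j$ to its strict transform $\Gamma_j$ on $\tilde X$. The entries $1,\dots,n-1$ of the vector do not change under this lift, since $E_i=\beta_n^*E_i'$. We have $(E_n\cdot\Gamma_j)\ge0$ when $\Gamma'_j\not\subseteq Z_n^{(n-1)}$.

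We then compare $[C]$ with $\sum a_j[\Gamma_j]$. The difference is a multiple of $\ell$, because both sides push forward to $[C']$ and $\Ker\beta_{n*}$ is spanned by $[\ell]$. If the difference is a nonnegative multiple of $\ell$, we are done. Otherwise we must absorb a negative multiple of $\ell$, that is, increase $C_n$. To do this, we use the freedom to choose the representatives $\Gamma'_j$ differently. When $\Gamma'_j\subseteq Z_n^{(n-1)}$, we replace its strict transform by a suitable section-type curve in $E_n|_{\Gamma'_j}$. The snc hypothesis gives an explicit normal bundle $N_{Z_n}$, split compatibly with the $E_i$'s, so that $E_n$ restricts to $\Ob{}(-1)$ twisted by $\sum_{h:\,Z_h\supset\dots} E_h$. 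This lets us realize, for each elementary $\Gamma'$, lifts with $C_n$ ranging over all integers $-\card{\{h<n : C_h=1\}}\le C_n\le 1$.

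The main obstacle is this last bound: we must show that $E_n\cdot C$ is bounded below by the sum over the decomposition of $-\card{\{h<n: (\Gamma_j)_h=1\}}$. In other words, the multiplicity of $C'$ along $Z_n^{(n-1)}$ is controlled by how many earlier exceptional divisors, made positive by $\Gamma'_j$, contain $Z_n$'s transform. The hypothesis $Z_h\not\supseteq Z_i$ for $h<i$ is used here. I would first try to prove this locally, using the snc coordinates: write $Z_n^{(n-1)}$ near $C'$ as cut out by transforms of coordinate functions. Then estimate $E_n\cdot C\ge-\sum_h(\text{contributions of }E_h\cdot C')$ by a Cartier-divisor inequality of the form $E_n\ge \beta_n^*(\text{coordinate divisor})-\sum_h\dots$ restricted to $C$.
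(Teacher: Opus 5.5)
There is a genuine gap. It sits at the step you yourself call the main obstacle, and that step is the whole content of the theorem.

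First, the inequality you aim for points the wrong way. Write $[C]=\sum_j a_j[\Gamma_j]+m[\ell]$ with $[\ell]=(0,\dotsc,0,-1)$. A small value of $E_n\cdot C$ is harmless, since it only makes $m$ larger. The bad case $m<0$ means $E_n\cdot C>\sum_j a_j\,(E_n\cdot\Gamma_j)$. So what you need is an \emph{upper} bound on $E_n\cdot C$ by what elementary lifts can carry, each lift contributing at most $1$.

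Second, no argument that uses only the numerical decomposition $[C']=\sum_j a_j[\Gamma'_j]$ from the induction can give such a bound, because $E_n\cdot C$ is not determined by $[C']$. Take $X=\AA^3$, $Z_1=V(x_1,x_2)$, $Z_2=V(x_2,x_3)$. All fibers of $\pi_1$ over $Z_1$ have the same class on $X^{(1)}$. Yet the strict transform of the fiber over the origin has $E_2$-degree $1$, and all the others have $E_2$-degree $0$.

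Third, your claim that every elementary $\Gamma'$ has lifts realizing every value $-\card{\{h<n:(\Gamma')_h=1\}}\le C_n\le1$ is false. If $Z_1\cap Z_2=\emptyset$, no curve on $\tilde X$ has vector $(-1,1)$. You also never check that a strict transform $\Gamma_j$ satisfies $E_n\cdot\Gamma_j\le1$.

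Fourth, the local Cartier-divisor estimate you sketch gives the opposite inequality. For $j\in J_n$, let $H'_j$ be the strict transform of $V(x_j)$ on $X^{(n-1)}$. Locally $\beta_n^*H'_j=\tilde H_j+E_n$, which yields only $E_n\cdot C\le H'_j\cdot C'$ when $C\not\subseteq\tilde H_j$. That is an upper bound, with $j$ depending on $C$, and it still has to be matched against a decomposition of $[C']$.

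The paper never handles arbitrary curves directly, and this is the ingredient your induction lacks.
\begin{itemize}
\item By \cref{prop:localmodel}, near each closed point $p$ the tower is an \'etale base change of the toric successive blowup of $\AA^d$ along coordinate subspaces. The fiber $\pi^{-1}(p)$ is identified with the central fiber, and intersection vectors are preserved.
\item In the toric model, the relative cone theorem of Cox--von Renesse (\cref{thm:toriccone}) reduces everything to finitely many torus-invariant wall curves. On each extremal ray one picks a wall curve whose class equals a primitive relation.
\item Tracking wall curves through the star subdivisions (\cref{lem:wallrec,lem:wallcurves}) gives $C_{i_0}=-1$ and $C_i=-\sum_{h<i,\,h\in I_j}C_h$ for some $j\in J_i$.
\item The primitive relation gives $C_i\le1$ (\cref{lem:vennlocal}). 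Combined with the recurrence, this yields the lower bound.
\item Hence the class of every curve in $\pi^{-1}(p)$ lies in the cone of elementary curves in that fiber. Every $\pi$-contracted curve lies in some fiber, so the theorem follows.
\end{itemize}
Working one fiber at a time and reducing to finitely many combinatorially computable invariant curves is exactly what controls $E_n\cdot C$. Your purely numerical inductive decomposition cannot supply that control.
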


The numerical conditions defining elementary curves are sharp when $X$ and the centers are allowed to vary.
Indeed, every nonzero integer vector satisfying these conditions occurs as an extremal generator for a suitable choice of $X$ and centers $Z_1,\dotsc,Z_n$ (\cref{prop:realization}).

For a fixed choice of $X$ and the centers, however, which elementary classes span extremal rays depends on how the centers meet.
In \cref{ss:numconstraints}, we use data on the intersections among the centers to derive further constraints on the intersection vectors $(C_1,\dotsc,C_n)$ of elementary curves spanning extremal rays.

The cone theorem gives each face of $\NEb(\tilde X/X)$ a $\pi$-nef supporting Cartier divisor.
To construct the corresponding contraction, it suffices to prove that this divisor is $\pi$-semiample.
Our second main result proves the stronger statement that every $\pi$-nef Cartier divisor on $\tilde X$ is $\pi$-base-point-free.

\begin{thm}[{\cref{thm:bpf,cor:contract}}]\label{introthm:contract}
	In the setting of \cref{introthm:cone}, the following hold.
	\begin{enumerate}
		\item Let $D$ be a Cartier divisor on $\tilde X$.
			If $D$ is $\pi$-nef, then $D$ is $\pi$-base-point-free.
		\item For every face $F\subseteq\overline{\operatorname{NE}}(\tilde X/X)$, the contraction $g_F\colon\tilde X\to\tilde X_F$ of $F$ over $X$ exists.
	\end{enumerate}
\end{thm}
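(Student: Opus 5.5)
We argue by induction on the number $n$ of centers. Work locally over $X$: the statement is local on $X$ because relative base-point-freeness means that $\pi^*\pi_*\Ob{\tilde X}(D)\to\Ob{\tilde X}(D)$ is surjective, so we may shrink $X$ to an affine neighbourhood of a point where the centers look like coordinate subspaces (simple normal crossings). Write $D\lineq\pi^*L-\sum_i a_iE_i$ with $L$ Cartier on $X$. After twisting by $\pi^*L$ on an affine base, we may assume $D=-\sum a_iE_i$.

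By \cref{introthm:cone}, $D$ is $\pi$-nef if and only if $D\cdot C\ge0$ for the finitely many elementary curves. From these inequalities we extract combinatorial conditions on $(a_1,\dotsc,a_n)$. Testing on a line in a fiber over a general point of $Z_i$ (source $i$, $C_i=-1$, and $C_h=0$ for $h>i$ unless $Z_h\supseteq Z_i$ generically) gives $a_i\ge\sum$ of the $a_h$ for later centers through which such lines pass. In the most basic case, it gives $a_i\ge0$.

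The key step is to exhibit $D$ as "generated by monomial ideals". Set
\[
	\Ic\coloneq\prod_i\Ic_{Z_i}^{b_i}\quad\text{or more generally}\quad\Ic\coloneq\sum_\alpha\prod_i\Ic_{Z_i}^{\alpha_i},
\]
an integrally closed monomial-type ideal on $X$ in the snc coordinates. Then show that $\pi^{-1}\Ic\cdot\Ob{\tilde X}=\Ob{\tilde X}(-\sum a_iE_i)$. Concretely, choose $\Ic=\pi_*\Ob{\tilde X}(-\sum a_iE_i)$. Since everything is toroidal with respect to the snc structure, locally $\tilde X\to X$ is (a product with a smooth factor of) a toric morphism of affine toric varieties: the successive blowup of coordinate subspaces of $\AA^N$ is a toric blowup given by successive stellar subdivisions. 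So the question reduces to toric geometry. A Cartier divisor $D=-\sum a_iE_i$ on the toric variety $\tilde X$, proper over the affine toric $X$, is $\pi$-nef if and only if its support function is convex on the fan. In that case it is base-point-free relative to an affine base, since a convex piecewise-linear support function gives global torus-invariant sections realising each local linear function. This works in arbitrary characteristic, as in \cite{FujinoSato}.

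The main obstacle is the reduction to the toric case when $X$ is only normal and the centers have simple normal crossings in the sense of \cref{def:snc}. Here we need the local model of $(X,Z_1,\dotsc,Z_n)$ to be, étale or formally locally, a product of a toric chart with the centers coordinate subspaces, and we need base-point-freeness to descend from the completion or étale cover. Surjectivity of $\pi^*\pi_*\Ob{}(D)\to\Ob{}(D)$ is compatible with flat base change, by flat base change for $\pi_*$, so a flat local model suffices. If $X$ is singular along the centers, I would instead argue directly by induction on $n$, factoring $\pi$ as $X^{(n)}\to X^{(n-1)}$ followed by the rest. Write $D=D'-a_nE_n^{\text{new}}$ and use the fact that $\Ob{}(-E)$ is relatively very ample for a blowup. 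Nefness of $D$ on the last-blowup fibers gives $a_n\ge0$. We would then show $\pi_{n*}\Ob{}(D)=\Ob{}(D'')\cdot\Ic_{Z_n}^{a_n}$ with $D''$ $\pi'$-nef by the elementary-curve criterion, apply induction to the strict transform structure, and use that $\Ic_{Z_n^{(n-1)}}^{a_n}\cdot\Ob{X^{(n)}}=\Ob{}(-a_nE_n)$.

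Part (2) then follows. For a face $F$, \cref{introthm:cone} and the rationality of the finitely generated cone give a $\pi$-nef Cartier $D$ with $D^\perp\cap\NEb(\tilde X/X)=F$. By (1), $D$ is $\pi$-base-point-free, so some multiple defines a morphism over $X$, and we take its Stein factorization to get $g_F$. A curve is contracted by $g_F$ exactly when $D\cdot C=0$.
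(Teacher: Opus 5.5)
Your main line follows the paper. You reduce to $D=-\sum_ia_iE_i$ and pass to an \'etale chart $\phi_p\colon U_p\to\AA^d$ over which $\tilde U_p\to U_p$ is the flat base change of the toric successive blowup $\pi_{Y_p}\colon\tilde Y_p\to\AA^d$. You then apply toric relative base-point-freeness (\cref{prop:toricbpf}) to $D_Y=-\sum_ia_iF_i$, pull back surjectivity of the evaluation map by flat base change, and obtain (2) from an integral supporting divisor and Stein factorization.

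There is, however, a missing step exactly where you write that ``the question reduces to toric geometry''. The toric theorem needs $D_Y$ to be $\pi_{Y_p}$-nef over all of $\AA^d$, i.e.\ convexity of its support function on the whole fan. Your hypothesis only controls $D_Y$ on curves of $\tilde Y_p$ lying over the open image $\phi_p(U_p)$, and $\phi_p$ is not surjective in general. You also cannot shrink the toric base instead, because the only torus-invariant open neighbourhood of $0$ in $\AA^d$ is $\AA^d$ itself. In effect you treat the \'etale chart as if it were the toric variety. The ``combinatorial conditions'' you propose to extract from elementary curves are never turned into convexity of the support function.

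The fix is the paper's \cref{lem:transport}. By the toric cone theorem, $\NEb(\tilde Y_p/\AA^d)$ is generated by torus-invariant wall curves, and each of them maps to the torus-fixed point $0=\phi_p(p)$ (\cref{prop:coordinate}). The cartesian square identifies $\pi^{-1}(p)$ with $\pi_{Y_p}^{-1}(0)$ so that $E_i\cdot C=F_i\cdot C_Y$. Hence $D_Y\cdot C_Y=D\cdot C\ge0$ on a set of generators. Alternatively, since every torus-orbit closure in $\AA^d$ contains $0$ and $\phi_p(U_p)$ is open, any fiber of $\pi_{Y_p}$ can be translated by the torus to a fiber over $\phi_p(U_p)$. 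With this step inserted, your argument is complete.

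Separately, drop the inductive fallback. \cref{def:snc} forces $X$ to be smooth along $\bigcup_iZ_i$, so the case it is meant for never occurs. As sketched it also fails. Indeed,
\[
\pi_{n*}\Ob{X^{(n)}}(D)=\Ob{X^{(n-1)}}(D')\cdot\Ic_{Z_n^{(n-1)}}^{a_n}, \qquad D'=-\sum_{i<n}a_iE_i,
\]
and $D'$ need not be nef over $X$. In \cref{exa:contain}, $-E_1-2E_2-E_3$ is $\pi$-nef, but $-E_1-2E_2$ has degree $-1$ on the strict transform in $X^{(2)}$ of $\pi_1^{-1}(0)$, whose vector is $(-1,1)$. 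Finally, your test on a line over a general point of $Z_i$ yields only $a_i\ge0$, since such a line avoids all later centers (\cref{lem:fiberclass}).
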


Since $\pi$ is birational, every contraction over $X$ obtained above is birational as well.
We next describe the exceptional loci and types of contractions of extremal rays.

\begin{thm}[{\cref{prop:locus,cor:typen}}]\label{introthm:general}
	In the setting of \cref{introthm:cone}, let $R$ be an extremal ray of $\NEb(\tilde X/X)$, and let $g_R\colon\tilde X\to\tilde X_R$ be its contraction.
	Then the following hold.
	\begin{enumerate}
		\item The exceptional locus $\Exc(g_R)$ is smooth and satisfies
			\[
				\Exc(g_R)\subseteq\bigcap_{E_i\cdot R<0}E_i.
			\]
			Its image $\pi(\Exc(g_R))$ is a union of connected components of $\bigcap_{E_i\cdot R\ne0}Z_i$.
		\item Let $i_0$ be the source of $R$, the least index for which $E_{i_0}\cdot R\ne0$.
			If $g_R$ is divisorial, then
			\[
				\Exc(g_R)=E_{i_0},\qquad \pi(\Exc(g_R))=Z_{i_0}.
			\]
			In particular, $g_R$ is small whenever $E_i\cdot R<0$ for some $i>i_0$.
	\end{enumerate}
\end{thm}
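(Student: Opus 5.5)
The plan is to work locally over $X$ in snc coordinates and to reduce everything to the combinatorics of intersection vectors, using \cref{introthm:cone} and \cref{introthm:contract}. By \cref{introthm:contract}, $g_R$ exists, and it is defined by a $\pi$-nef, $\pi$-base-point-free Cartier divisor $D$ with $D^\perp\cap\NEb(\tilde X/X)=R$. Hence $\Exc(g_R)$ is the union of the $\pi$-contracted curves $C$ with $[C]\in R$.

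\textbf{Step 1: the inclusion $\Exc(g_R)\subseteq\bigcap_{E_i\cdot R<0}E_i$.} If $[C]\in R$ and $E_i\cdot R<0$, then $E_i\cdot C<0$. An irreducible curve meeting an effective Cartier divisor negatively lies in its support, so $C\subseteq E_i$. Here $E_i$ is the pullback of the $i$-th exceptional divisor, hence effective and Cartier.

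\textbf{Step 2: image and smoothness.} I would work étale-locally (or Zariski-locally) on $X$ near a point $x$. There the centers are coordinate subspaces, so $\tilde X$ is covered by charts that are products of affine spaces with a smooth base, and the fiber $\pi^{-1}(x)$ is a union of toric-type strata. For $x\in\pi(\Exc(g_R))$, take a curve $C$ over $x$ with $[C]\in R$. Any $i$ with $C_i\ne0$ satisfies $x\in Z_i$, since $E_i$ lies over $Z_i$ and a curve disjoint from $E_i$ has $C_i=0$. So $\pi(\Exc(g_R))\subseteq W\coloneq\bigcap_{E_i\cdot R\ne0}Z_i$.

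For the converse, the local structure of $\tilde X$ over $W$ depends only on which centers contain a neighborhood, by the snc condition. The same elementary curve therefore moves in a family over the connected component of $W$ containing $x$. This gives that $\pi(\Exc(g_R))$ is a union of connected components of $W$. For smoothness, I would identify $\Exc(g_R)$ locally with a transversal intersection of the $E_i$ with $E_i\cdot R<0$, or more precisely with a union of strata that the local toric description of the charts shows to be a smooth subvariety (a projective-bundle-type locus over $W$).

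\textbf{Step 3: divisorial case.} Suppose $g_R$ is divisorial, so $\Exc(g_R)$ is a prime divisor $E$. Take an elementary curve $C$ spanning $R$ with source $i_0$. By Step 1, $E\subseteq E_{i_0}$, since $C_{i_0}=-1<0$. Here $E_{i_0}$ is the total transform, which may be reducible. However, $C$ maps birationally onto a line in a fiber of $X^{(i_0)}\to X^{(i_0-1)}$, so $E$ dominates the exceptional divisor of the $i_0$-th blowup. Hence $E$ is the strict transform of that divisor and $\pi(E)=Z_{i_0}$.

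To see that $E_{i_0}$ itself equals $E$, I need the components of $E_{i_0}$ that come from later blowups to be absent. These components are exceptional over $Z_i$ with $Z_i\subseteq Z_{i_0}$ and $i>i_0$. They should lie in $E_i$, and on them a curve of class $R$ must have $C_i>0$, so $E_i\cdot R>0$. I would argue that $E$ divisorial forces the intersection vector to be $-e_{i_0}$ modulo indices $i$ with $Z_i\not\subseteq Z_{i_0}$. Then the blowups over $Z_{i_0}$ contribute nothing, and $E_{i_0}$ equals its strict transform.

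The final claim then follows. If $E_i\cdot R<0$ for some $i>i_0$, Step 1 gives $\Exc(g_R)\subseteq E_{i_0}\cap E_i$. This has codimension at least two, because $Z_h\not\supseteq Z_i$ for $h<i$ means $E_i\not\subseteq E_{i_0}$, so $g_R$ is not divisorial.

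\textbf{Main obstacle.} I expect Step 2 to be the hardest part: smoothness together with exactly recovering whole components of $W$. It requires a careful chart computation showing that curves of class $R$ sweep out a smooth locus uniformly over each component. My first attempt would be induction on $n$, comparing with the cone of $X^{(n-1)}$ and treating the last blowup as a projective bundle over the strict transform of $Z_n$.
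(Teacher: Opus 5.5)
Your Step~1, your inclusion $\pi(\Exc(g_R))\subseteq Z_R=\bigcap_{E_i\cdot R\ne0}Z_i$, and your closing codimension argument for smallness are correct. The genuine gap is Step~2, which carries the substance of part~(1). You prove neither that $\Exc(g_R)$ is smooth nor that $\pi(\Exc(g_R))$ is open in $Z_R$, and neither of the two mechanisms you suggest works.

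First, $\Exc(g_R)$ is not in general locally the transverse intersection of the $E_i$ with $E_i\cdot R<0$. For $Z_1=V(x_1,x_2)$ and $Z_2=V(x_2,x_3)$ in $\AA^3$ with $R=\RRp(-1,1)$, one has $I^-(R)=\{1\}$, yet $\Exc(g_R)$ is the strict transform of $\pi_1^{-1}(0)$, which is a curve. Similarly, in \cref{exa:branch} it is a $\PP^2$ of codimension three. The correct local description is $\bigcap_{i\in I^-(R)}E_i\cap\bigcap_{j\in J^-_p(R)}\tilde H_j$. It also involves the strict transforms $\tilde H_j$ of the local coordinate hypersurfaces $\phi_p^{-1}V(x_j)$ whose degree $-\sum_{i\in I_j}E_i\cdot R$ on $R$ is negative. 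Your phrase ``a union of strata'' does not pin this down, and a union of strata need not be smooth.

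Second, the claim that a curve of class $R$ ``moves in a family over the connected component'' is unsupported. The local model changes along a component of $Z_R$ whenever a center $Z_k$ with $E_k\cdot R=0$ meets that component in a proper closed subset. Moreover, $\pi(\Exc(g_R))$ can be a proper union of components of $Z_R$ (\cref{exa:locusproper}). So openness is a real statement that needs a real mechanism.

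The missing idea is to identify $g_R$, over a coordinate neighborhood $U_p$ of $p\in\pi(\Exc(g_R))$, with the base change of a toric extremal contraction, and then read its exceptional locus off the fan.
\begin{itemize}
\item Since $\NEb(\tilde Y_p/\AA^d)$ is generated by curves in the central fiber, it injects into $\NEb(\tilde X/X)$. The face $R_p=\iota_p^{-1}(R)$ is then an extremal ray, and $g_R$ over $U_p$ is the base change of $g_{R_p}$ because both contract exactly the same curves.
\item Sato's theorem gives $\Exc(g_{R_p})=V(\tau_R)$, where $\tau_R$ is spanned by \emph{all} rays of negative degree on $R_p$: the $v_i$ with $i\in I^-(R)$ and the $e_j$ with $j\in J^-(R)$. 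This orbit closure is a smooth transverse intersection.
\item $V(\tau_R)$ maps onto $V(\sigma)$, where $\sigma$ is the smallest face of the orthant containing $\tau_R$. A short combinatorial check identifies $V(\sigma)$ with $\bigcap_{F_i\cdot R\ne0}W_i$.
\item Pulling back along the \'etale map gives smoothness and $\pi(\Exc(g_R))\cap U_p=Z_R\cap U_p$. Hence the image is open in $Z_R$, and properness makes it closed.
\end{itemize}

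Separately, your worry in Step~3 is misplaced. The hypothesis $Z_{i_0}\not\supseteq Z_i$ for $i>i_0$, which you use correctly at the end, means no later center lies in the strict transform of the $i_0$-th exceptional divisor. So $E_{i_0}$ is already a prime divisor. Then $E\subseteq E_{i_0}$ gives $E=E_{i_0}$ and $\pi(E)=Z_{i_0}$ at once. Your ``dominates'' argument, which is not valid as stated, and the proposed reduction of the intersection vector can both be dropped. With that fix, part~(2) and the smallness criterion follow from Step~1 alone. This is more elementary than the paper's route, which goes through the local codimension formula $|I^-(R)|+|J^-_p(R)|$.
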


We give a more precise local description of the exceptional locus and a criterion for the contraction to be divisorial in \cref{prop:locus,cor:typen}.

For an arbitrary face, we identify the target of its contraction as the normalized blowup of an explicit ideal on $X$.

\begin{thm}[{\cref{thm:targetn}}]\label{introthm:target}
	In the setting of \cref{introthm:cone}, let $F$ be a face of $\NEb(\tilde X/X)$, and let $g_F\colon\tilde X\to\tilde X_F$ be its contraction.
	Let $a=(a_1,\dotsc,a_n)\in\ZZ_{\ge0}^n$ be a vector such that $-\sum_i a_iE_i$ is a supporting divisor of $F$.
	Set $\Ic^{(a)}\coloneq\Ic_{Z_1}^{a_1}\cap\dotsb\cap\Ic_{Z_n}^{a_n}$.
	Then there is an isomorphism
	\[
		\tilde X_F\isom\overline{\Bl}_{\Ic^{(a)}}X
	\]
	over $X$, where $\overline{\Bl}_{\Ic^{(a)}}X$ denotes the normalization of the blowup $\Bl_{\Ic^{(a)}}X$.
\end{thm}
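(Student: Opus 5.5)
The plan is to realize $\tilde X_F$ as the relative Proj of the section ring of $D\coloneq-\sum_i a_iE_i$, and then compare that ring with the Rees algebra of the ideals $\Ic^{(ma)}$.

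First, since $D$ is a supporting divisor of $F$, it is $\pi$-nef. By \cref{introthm:contract}(1), every multiple $mD$ is then $\pi$-base-point-free. Hence the contraction $g_F$ is the Stein factorization of the morphism over $X$ defined by $|mD|$ for $m\gg0$. Equivalently,
\[
	\tilde X_F\isom\operatorname{Proj}_X\bigoplus_{m\ge0}\pi_*\Ob{\tilde X}(mD),
\]
and this sheaf of algebras is finitely generated and integrally closed in the appropriate sense, because $\tilde X_F$ is normal. Here I use that $\pi_*\Ob{\tilde X}=\Ob X$, since $X$ is normal and $\pi$ is birational.

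Second, I identify $\pi_*\Ob{\tilde X}(-\sum_i ma_iE_i)$ with an ideal of $\Ob X$. A function $f$ lies in this sheaf if and only if $\operatorname{ord}_{E_i}(\pi^*f)\ge ma_i$ for each $i$, where $\operatorname{ord}_{E_i}$ is taken with multiplicities of the pulled-back divisors $E_i$.

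The key local computation is a comparison of valuations. Work étale-locally, or formally, where the snc condition makes the $Z_i$ coordinate subspaces of a smooth germ times a normal factor. Along the generic point of $Z_i$ the space is smooth. There the condition on $f$ should reduce to $f\in\Ic_{Z_i}^{ma_i}$ after saturating. Strictly, the sheaf should be $\bigcap_i\Ic_{Z_i}^{(ma_i)}$ (symbolic powers), with the vanishing order measured by the exceptional divisor over $Z_i$.

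There is one subtlety: the total transform $E_i$ is not just the strict exceptional divisor $E_i'$ of the $i$-th blowup. It equals $E_i'$ plus the transforms of later exceptional divisors whose centers lie over $Z_i$. Still, I expect the conditions at the later components to be implied by those at the first. For a later center $Z_j\subseteq Z_i$ with $h<j$, the valuation of $\pi^*f$ along $E_j'$ is at least the sum of the valuations along the earlier centers containing $Z_j$. With monomial coordinates this is a routine check, and it shows the section ring equals $\bigoplus_m \bigcap_i\Ic_{Z_i}^{ma_i}$, up to integral closure. Since the $Z_i$ have snc, they are locally complete intersections of coordinate hyperplanes, so their powers are saturated and symbolic powers agree with ordinary ones. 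Thus the ring is $\bigoplus_m\Ic^{(ma)}$.

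Third, I compare with $\Ic^{(a)}$ itself. Locally $\Ic^{(ma)}$ is a monomial ideal, namely the intersection of powers of coordinate prime ideals. The algebra $\bigoplus_m\Ic^{(ma)}$ is then the normalization (integral closure) of the Rees algebra $\bigoplus_m(\Ic^{(a)})^m$: it is finite over it by polyhedral (Gordan) arguments, and $\overline{(\Ic^{(a)})^m}\supseteq\Ic^{(ma)}$ by a Newton polyhedron computation. Taking $\operatorname{Proj}$, the Proj of the integral closure of the Rees algebra is $\overline{\Bl}_{\Ic^{(a)}}X$. This gives the isomorphism over $X$.

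I expect the main obstacle to be the valuation comparison in the second step. One must show that the condition "$\operatorname{ord}\ge ma_i$ along the total transform $E_i$" for all $i$ carves out exactly $\bigcap_i\Ic_{Z_i}^{ma_i}$, up to integral closure. This requires handling nested centers and checking the global normal factor, and I would do it by induction on $n$ using local monomial coordinates.
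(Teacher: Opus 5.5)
Your first step is fine, and your second step reaches the right formula $\pi_*\Ob{\tilde X}(mD)=\Ic^{(ma)}$. It is easier than you expect. The hypothesis $Z_h\not\supseteq Z_i$ for $h<i$ means no later center is contained in the strict transform of an earlier exceptional divisor. So each $E_i$ is already prime, $v_{E_i}$ is the $\Ic_{Z_i}$-adic order, and no ``up to integral closure'' is needed.

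The genuine gap is in your third step. The equality $\overline{(\Ic^{(a)})^m}=\Ic^{(ma)}$, i.e.\ that $\bigoplus_m\Ic^{(ma)}$ is the normalization of the Rees algebra of $\Ic^{(a)}$, is not a general Newton-polyhedron fact about intersections of powers of coordinate ideals. Locally it says that the polyhedron $\{u\in\RR^d_{\ge0}:\sum_{j\in J_i}u_j\ge a_i\}$ equals the Newton polyhedron of $\Ic^{(a)}$, and this fails in general. For example, take $X=\AA^3$, $Z_1=V(x_1,x_2)$, $Z_2=V(x_2,x_3)$, $Z_3=V(x_1,x_3)$ and $a=(1,1,1)$. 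Then $\Ic^{(a)}=(x_1x_2,x_2x_3,x_1x_3)$, so $\overline{(\Ic^{(a)})^2}\subseteq\mathfrak m^4$ with $\mathfrak m=(x_1,x_2,x_3)$, whereas $x_1x_2x_3\in\Ic^{(2a)}$. So $\bigoplus_m\Ic^{(ma)}$ is not even integral over the Rees algebra, and your finiteness claim fails as well. This is consistent with the theorem only because $-E_1-E_2-E_3$ is not $\pi$-nef. It has degree $-1$ on the strict transform of the fiber of the first blowup over the origin, whose class is $(-1,1,1)$. So the identity you need holds precisely because $D$ is $\pi$-nef, and your third step never uses nefness.

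The missing input is that \cref{thm:bpf} applies to $D$ itself, not just to a multiple. Together with $\pi_*\Ob{\tilde X}(D)=\Ic^{(a)}$, it makes the evaluation map $\pi^*\Ic^{(a)}\to\Ob{\tilde X}(D)$ surjective, i.e.\ $\Ic^{(a)}\Ob{\tilde X}=\Ob{\tilde X}(D)$ is invertible. This repairs your step: then $\Ic^{(ma)}=\pi_*\bigl((\Ic^{(a)})^m\Ob{\tilde X}\bigr)\subseteq\overline{(\Ic^{(a)})^m}$ by the valuative criterion. The reverse inclusion holds because $\Ic^{(ma)}$ is an intersection of valuation ideals.

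Once $\Ic^{(a)}\Ob{\tilde X}$ is invertible, though, the Rees algebra comparison is unnecessary, and the paper argues directly. The morphism $\pi$ factors through $\Bl_{\Ic^{(a)}}X$, and by normality of $\tilde X$ through $g\colon\tilde X\to\bar X=\overline{\Bl}_{\Ic^{(a)}}X$, with $g_*\Ob{\tilde X}=\Ob{\bar X}$. Moreover, $g^*(\Ic^{(a)}\Ob{\bar X})=\Ob{\tilde X}(D)$, and $\Ic^{(a)}\Ob{\bar X}$ is ample over $X$. Hence $g$ contracts a $\pi$-contracted curve $C$ exactly when $D\cdot C=0$, i.e.\ when $[C]\in F$.
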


In \cref{cor:MY}, we recover the two-center construction of \cite{MasamuraYoshida} under its hypotheses over $\CC$.
The contraction defined by $-E_1-E_2$ has target $\Bl_{Z_1\cup Z_2}X$.
No normalization is needed in this case, since this blowup is normal by the proof of \cite[Theorem~3.7]{MasamuraYoshida}.

We now turn to small contractions of extremal rays and prove the existence of their flips.

\begin{thm}[{\cref{thm:flip}}]\label{introthm:flip}
	In the setting of \cref{introthm:cone}, let $R$ be an extremal ray of $\NEb(\tilde X/X)$ whose contraction $g_R\colon\tilde X\to\tilde X_R$ is small.
	Let $D$ be an $\RR$-Cartier divisor on $\tilde X$ such that $D\cdot R<0$.
	Then the $D$-flip
	\[
		g_R^+\colon\tilde X^+\longrightarrow\tilde X_R
	\]
	of $g_R$ exists.
\end{thm}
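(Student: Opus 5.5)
We plan to construct the flip as the relative Proj of the section ring of $D$ over $\tilde X_R$, and to prove finite generation by exhibiting the flipped variety directly as another successive blowup of the same kind. The key point is that $\tilde X$, $\tilde X_R$ and a candidate $\tilde X^+$ are all normal varieties projective and birational over $X$. So it suffices to find a normal variety $\tilde X^+$ with a small projective morphism $g_R^+\colon\tilde X^+\to\tilde X_R$ such that the strict transform $D^+$ is $\RR$-Cartier and $g_R^+$-ample. Since $N^1(\tilde X/X)$ is spanned by $E_1,\dotsc,E_n$ and $R$ is extremal, $D\equiv_{g_R}\lambda\sum_i b_iE_i$ for some fixed divisor $\sum_i b_iE_i$ negative on $R$, with $\lambda>0$. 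The flip is therefore independent of $D$, and we may take $D=-H$ for $H$ a $g_R$-ample divisor of the form $\sum a_iE_i$ with $a\in\ZZ^n$.

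\textbf{Step 1 (local reduction).} By \cref{introthm:general}(1), $\Exc(g_R)$ lies over a union of connected components of $W\coloneq\bigcap_{E_i\cdot R\ne0}Z_i$. Away from these components $g_R$ is an isomorphism. Existence of the flip is local over $\tilde X_R$, hence local over $X$ near $W$. Near a point of $W$ the snc hypothesis gives local coordinates in which each $Z_i$ is a coordinate linear subspace. After an étale or formal localization, $\pi$ is a product of a toric successive blowup of affine space with a smooth factor.

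\textbf{Step 2 (toric flip and reordering).} In the toric model, $\NEb(\tilde X/X)$ and the ray $R$ are computed by the fan. The flip of a small extremal contraction of a toric variety exists in any characteristic by Fujino--Sato \cite{FujinoSato}. We would rather identify it intrinsically, so that it glues. The natural candidate is the successive blowup along the same centers taken in a modified order: swap the source $i_0$ with the indices $i>i_0$ having $E_i\cdot R<0$. The smallness criterion (\cref{cor:typen}) should say that these indices are exactly the ones for which $Z_{i_0}\not\subseteq Z_i$-type conditions make the order matter. The reordered blowup $\tilde X^+$ is again of the type in \cref{introthm:cone}, possibly after replacing the ordering condition. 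It is isomorphic to $\tilde X$ in codimension one, since both have exceptional divisors over the same $Z_i$. Moreover $\sum_i a_iE_i^+$ is $\RR$-Cartier on it, because $\tilde X^+$ is smooth over $X$ near $W$.

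\textbf{Step 3 (target, ampleness, gluing).} Using \cref{introthm:target}, $\tilde X_R\isom\overline{\Bl}_{\Ic^{(a)}}X$ for a supporting vector $a$ of $R$. Applying \cref{introthm:contract} and \cref{introthm:target} on $\tilde X^+$ to the corresponding face shows that $\tilde X^+$ maps to the same normalized blowup by a small morphism $g_R^+$. It remains to check that $-H^+$ is $g_R^+$-ample, that is, positive on the curves contracted by $g_R^+$. Via \cref{introthm:cone} for $\tilde X^+$, we verify this on elementary curves of $\tilde X^+$ in the flipped ray. Their intersection vectors are computed from the reordered tower, and we expect them to be the negatives of those of $R$ in the swapped coordinates. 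Once ampleness holds, $\tilde X^+=\Proj_{\tilde X_R}\bigoplus_m g_{R*}\Ob{\tilde X}(-mH)$. This uniqueness lets the local constructions glue over $\tilde X_R$.

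\textbf{Main obstacle.} The hard step is Step 2. We must show that a suitable reordering, or more generally a modified tower with centers built from intersections of the $Z_i$, really produces the flip of an arbitrary small extremal ray, and then verify the sign of $H^+$ on the new contracted curves. If reordering alone does not suffice, the fallback is purely toric: use Fujino--Sato locally and glue via the uniqueness of the Proj description, which requires only finite generation of $\bigoplus_m g_{R*}\Ob{}(-mH)$. That finite generation can be checked étale-locally, where it is a toric statement.
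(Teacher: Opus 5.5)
\textbf{Your main route (Steps 2--3) has a genuine gap.} A small extremal contraction of $\tilde X$ need not be flipped by any reordering of the centers, and the paper gives an explicit example. Take $X=\AA^5$, $Z_1=V(x_1,x_2,x_3)$, $Z_2=V(x_2,x_4)$, $Z_3=V(x_3,x_5)$. The ray $R=\RRp(-1,1,1)$ is extremal, its contraction is small (a flopping contraction), and it is supported by $H=-2E_1-E_2-E_3$.
\begin{itemize}
\item If $Z_2$ is blown up before $Z_1$, take a point of $(Z_1\cap Z_2)\setminus Z_3$. Over it, the strict transform of the first exceptional line has vector $(1,-1,0)$ in the original indexing, so its $H^\sigma$-degree is $-1$.
\item The same happens if $Z_3$ precedes $Z_1$.
\item If $Z_1$ comes first, the strict transforms of $Z_2$ and $Z_3$ meet transversely. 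The remaining blowups then commute and return $\tilde X$.
\end{itemize}
So no ordering makes $H^\sigma$ nef. Nefness of $H^\sigma$ is exactly what your Step 3 needs for the reordered model to map to $\overline{\Bl}_{\Ic^{(a)}}X$.

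Your specific recipe also fails even where reordering does work. Take $Z_1=V(x_1,x_2)$, $Z_2=V(x_1,x_3,x_4)$, $Z_3=V(x_3,x_4)$ and $R=\RRp(-1,1,-1)$. Swapping the source with the negative index $3$ gives the order $Z_3,Z_2,Z_1$. This violates the ordering hypothesis, since $Z_3\supseteq Z_2$. The flop there comes instead from moving the \emph{positive} index to the front ($Z_2,Z_1,Z_3$), as in the two-center case. The ``modified tower'' alternative is not developed, so Steps 2--3 do not give a proof.

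\textbf{Your fallback is the right argument and is essentially the paper's proof.} As a one-sentence remark, though, it omits what makes it work.

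First, reducing to an integral $D=\sum_ia_iE_i$ needs more than $\rho(\tilde X/\tilde X_R)=1$. You must know that a divisor numerically trivial on $R$ is, up to $\pi^*$ of a divisor on $X$, pulled back from $\tilde X_R$. This is the exact sequence in the contraction corollary. Only then do the strict transforms of $D'$ and $tD$ differ by a pullback.

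Second, by Koll\'ar--Mori the flip exists iff $\mathcal A=\bigoplus_{m\ge0}g_{R*}\Ob{\tilde X}(mD)$ is finitely generated, and this question is local on $\tilde X_R$. To treat $\mathcal A$ as an \'etale-local toric algebra, you need the following. Over a coordinate chart $\phi_p\colon U_p\to\AA^d$ with $p\in\pi(\Exc(g_R))$, the morphism $g_R$ is the base change of the toric contraction $g_{R_p}$ of $R_p=\iota_p^{-1}(R)$. Here $\iota_p\colon\NEb(\tilde Y_p/\AA^d)\hookrightarrow\NEb(\tilde X/X)$ exists because the local toric cone is generated by curves in the central fiber, which correspond to curves in $\pi^{-1}(p)$. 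The two contractions contract the same curves, so they agree over $U_p$.

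Third, to invoke the Fujino--Sato toric flip you need $g_{R_p}$ to be small. The paper gets this from Sato's description of $\Exc(g_{R_p})$ as an irreducible orbit closure. Its codimension therefore equals that of its nonempty \'etale preimage $\Exc(g_R)\cap\tilde U_p$; irreducibility matters because an \'etale map need not be surjective.

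Flat base change then gives $\restr{\mathcal A}{\tilde U_{p,R}}\isom\tilde\phi_{p,R}^*\mathcal A_p$, which is finitely generated. No gluing of local flips is needed: $\Proj_{\tilde X_R}\mathcal A$ is the flip globally.
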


The flipped model $\tilde X^+$ is independent of the choice of $D$, up to unique isomorphism over $\tilde X_R$ (\cref{lem:flipindependence}).
If $X$ is $\QQ$-factorial, then $\tilde X$ is also $\QQ$-factorial.
In this case, the construction gives a flip, a flop, or an anti-flip according to whether $K_{\tilde X}$ is negative, numerically trivial, or positive on $R$.

In the two-center construction of \cite{MasamuraYoshida}, the flip is obtained by reversing the order of the blowups.
Already for three centers, a flop need not arise from any reordering of the original centers (\cref{exa:branch}).
The following theorem characterizes when a reordered successive blowup gives the flip.

\begin{thm}[{\cref{thm:reorderflip}}]\label{introthm:reorderflip}
	In the setting of \cref{introthm:flip}, let $\sigma\in\mathfrak S_n$ be a permutation such that $Z_{\sigma(h)}\not\supseteq Z_{\sigma(i)}$ whenever $h<i$.
	Let $\pi^\sigma\colon\tilde X^\sigma\to X$ be the successive blowup along $Z_{\sigma(1)},\dotsc,Z_{\sigma(n)}$.
	Let $\varphi\colon\tilde X\dashrightarrow\tilde X^\sigma$ be the induced birational map over $X$.
	Let $H$ be an $\RR$-Cartier supporting divisor of $R$, and let $D^\sigma$ and $H^\sigma$ be the strict transforms of $D$ and $H$ on $\tilde X^\sigma$, respectively.
	Then $D^\sigma$ and $H^\sigma$ are $\RR$-Cartier, and the following are equivalent.
	\begin{enumerate}
		\item The divisor $H^\sigma$ is $\pi^\sigma$-nef, and $\varphi$ is not an isomorphism over $X$.
		\item There is a commutative diagram
			\[
			\begin{tikzcd}[column sep=small]
				\tilde X \arrow[rr, dashed, "\varphi"] \arrow[dr, "g_R"'] && \tilde X^\sigma \arrow[dl, "g_R^\sigma"] \\
				& \tilde X_R &
			\end{tikzcd}
			\]
			in which $g_R^\sigma$ is the $D$-flip of $g_R$.
	\end{enumerate}
\end{thm}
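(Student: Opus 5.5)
We first show that $D^\sigma$ and $H^\sigma$ are $\RR$-Cartier. The maps $\tilde X\to X$ and $\tilde X^\sigma\to X$ are both birational, and over the open set $X\setminus\bigcup_iZ_i$ they are isomorphisms. The birational map $\varphi$ is an isomorphism in codimension one: every prime divisor exceptional over $X$ on either side is one of the strict transforms of the exceptional divisors over the $Z_i$. These are in canonical bijection, since each $Z_i$ has codimension at least two and, under the simple normal crossings hypothesis, contributes exactly one divisor on each model. So strict transforms of divisors are well defined. Locally on $X$ we may write $D\equiv_\pi -\sum_i d_iE_i$ plus a pullback from $X$. We then argue that the relative Picard group of $\tilde X^\sigma$ over $X$ is generated by the $E^\sigma_j$ and that $\tilde X^\sigma$ is $\QQ$-factorial over $X$ relative to $X$. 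This gives the $\RR$-Cartier property after expressing the strict transform of each $E_i$ as a combination of the $E^\sigma_j$; this expression comes from the snc local description of the successive blowup.

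\textbf{(2)$\Rightarrow$(1).} Suppose $g_R^\sigma$ is the $D$-flip. Then $H=g_R^*A$ for some $A$ that is ample over $X$ (after adding a pullback). Since $\varphi$ is an isomorphism in codimension one over $\tilde X_R$, we get $H^\sigma=(g_R^\sigma)^*A$, so $H^\sigma$ is $\pi^\sigma$-nef. The map $\varphi$ is not an isomorphism because $D$ is $g_R$-anti-ample while $D^\sigma$ is $g_R^\sigma$-ample, and $g_R$ is small with nonempty exceptional locus.

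\textbf{(1)$\Rightarrow$(2).} Apply \cref{introthm:contract} to $\tilde X^\sigma$; its hypotheses hold for the reordered centers because of the assumption on $\sigma$. Then $H^\sigma$ is $\pi^\sigma$-semiample and defines a contraction $h\colon\tilde X^\sigma\to Y$ over $X$. Both $\tilde X$ and $\tilde X^\sigma$ are normal, and they are isomorphic in codimension one with $H^\sigma$ the strict transform of $H$. Hence
\[
	\bigoplus_m\pi_*\Ob{\tilde X}(mH)\isom\bigoplus_m\pi^\sigma_*\Ob{\tilde X^\sigma}(mH^\sigma),
\]
so $Y\isom\tilde X_R$ and $h=g_R^\sigma$, with $\varphi$ commuting over $\tilde X_R$.

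It remains to show that $D^\sigma$ is $g_R^\sigma$-ample. The relative Picard number of $\tilde X_R$ over $X$ is $n-1$, so $N_1(\tilde X^\sigma/\tilde X_R)$ is at most one-dimensional. It is nonzero because $\varphi$ is not an isomorphism: if $g_R^\sigma$ were finite, it would be an isomorphism onto the normal variety $\tilde X_R$, forcing $g_R$ to be an isomorphism in codimension one onto $\tilde X^\sigma$ with $\varphi$ a morphism, which contradicts smallness. So $\NEb(\tilde X^\sigma/\tilde X_R)$ is a single ray $R^\sigma$. Take any curve $C^\sigma$ contracted by $g_R^\sigma$. By the same isomorphism-in-codimension-one argument, the numerical pairing behaves so that $D^\sigma\cdot R^\sigma$ has sign opposite to $D\cdot R$. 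Concretely, choose a relatively ample $A^\sigma$ on $\tilde X^\sigma$; its strict transform $A$ on $\tilde X$ cannot be $g_R$-nef, since otherwise the two models would coincide over $\tilde X_R$. Hence $A\cdot R<0$. Because $N^1$ over $\tilde X_R$ is one-dimensional on both sides, $D=\lambda A$ modulo pullbacks with $\lambda>0$, and thus $D^\sigma=\lambda A^\sigma$ is $g_R^\sigma$-ample.

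The main obstacle is the $\RR$-Cartier statement together with the identification of relative Néron–Severi spaces under strict transform. This needs the local snc description of both towers, to see that each $E_i$ transforms to a $\ZZ$-combination of Cartier divisors on $\tilde X^\sigma$.
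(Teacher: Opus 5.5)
Your skeleton matches the paper's. Strict transforms are handled by the isomorphism in codimension one plus \cref{lem:divisordecomposition}. For (2)$\Rightarrow$(1) you use $H=g_R^*A$. For (1)$\Rightarrow$(2) you contract $H^\sigma$, identify the target with $\tilde X_R$, and use $\rho(\tilde X^\sigma/\tilde X_R)=1$. Identifying the target through the equality of the section algebras of $H$ and $H^\sigma$, rather than through \cref{thm:targetn}, is a legitimate alternative that avoids the explicit ideal computation.

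\textbf{The gap.} In (1)$\Rightarrow$(2) you apply \cref{introthm:contract} and the section-algebra comparison directly to the given $\RR$-Cartier $H$. Base-point-freeness is proved only for Cartier divisors. For irrational $H$ the algebra $\bigoplus_m\pi_*\Ob{\tilde X}(\lfloor mH\rfloor)$ need not be finitely generated, so its $\Proj$ is not the contraction. Nor can you switch to a convenient integral supporting divisor $H'$ of $R$: hypothesis (1) concerns the nefness of this particular $H^\sigma$, and a priori nothing tells you that $(H')^\sigma$ is $\pi^\sigma$-nef. The missing step is the paper's simultaneous rational approximation:
\begin{itemize}
\item after subtracting the pullback part, write $H=-\sum_ia_iE_i$;
\item regard the finitely many integral generators of \emph{both} $\NEb(\tilde X/X)$ and $\NEb(\tilde X^\sigma/X)$ as linear forms in $a$;
\item choose a rational point near $a$ in the rational subspace where the same forms vanish, and clear denominators.
\end{itemize}
This preserves both ``$H$ supports $R$'' and ``$H^\sigma$ is $\pi^\sigma$-nef''. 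Replacing $D$ by $\pm E_h$ via \cref{lem:flipindependence} is the analogous reduction for $D$; your auxiliary Cartier $A^\sigma$ does that job.

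\textbf{The $\RR$-Cartier step.} You call this the main obstacle, but it is short, and your sketch points at the wrong ingredients. $\tilde X^\sigma$ need not be $\QQ$-factorial, since $X$ is only normal. Nothing needs to be ``expressed'': the strict transform of $E_i$ is exactly $E_i^\sigma$, because both give the $\Ic_{Z_i}$-adic valuation (the generic point of $Z_i$ lies on no earlier center in either ordering). This equality of valuations, not a count of divisors, is what makes $\varphi$ an isomorphism in codimension one. Combine it with the honest equality $D=\pi^*D_0-\sum_ia_iE_i$ from \cref{lem:divisordecomposition}, not merely $\equiv_\pi$. Then $D^\sigma=(\pi^\sigma)^*D_0-\sum_ia_iE_i^\sigma$, which is $\RR$-Cartier.

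The same remark gives the identification $N^1(\tilde X/\tilde X_R)\isom N^1(\tilde X^\sigma/\tilde X_R)$ with $[D]\mapsto[D^\sigma]$. Strict transform sends $[E_i]$ to $[E_i^\sigma]$, and $g_R^*B$ to $(g_R^\sigma)^*B$ because both contractions are small. Your final step needs this identification. In particular it handles the case $A\cdot R=0$, which your ``the models would coincide'' remark does not cover.

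\textbf{Nonvanishing of $N_1(\tilde X^\sigma/\tilde X_R)$.} This follows at once from $\rho(\tilde X^\sigma/X)=n$, $\rho(\tilde X_R/X)=n-1$, and the exact sequence in \cref{cor:contract}. In your finite case the contradiction is not with smallness as such. Rather, each $E_i$ would be the pullback of the Cartier divisor $E_i^\sigma$ under the small morphism $g_R$, forcing $E_i\cdot R=0$ for every $i$, which is impossible since $R\ne0$.
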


In \cref{sec:examples}, we recover the two-center construction and then illustrate these results with two examples involving three centers.
For the latter examples, we describe the contraction targets and exceptional loci; the first flop cannot be realized by reordering the centers, whereas the second can.

We now outline the proofs of these results.
In the two-center construction of \cite{MasamuraYoshida}, the relative nef cone is computed directly from the geometry of the fibers.
For an arbitrary number of centers, we use local toric models of the entire blowup sequence.
The simple normal crossings condition expresses this sequence locally as an \'etale base change of successive coordinate blowups of $\AA^d$ (\cref{sec:model}).
We track wall curves through the corresponding star subdivisions to obtain a recurrence for their intersection numbers.
Combining this recurrence with bounds from primitive relations gives the inequalities defining elementary curves.

For contractions, we compare curve classes on $\tilde X$ and its coordinate models using intersections with the exceptional divisors.
This comparison transfers relative nefness to the toric models.
Toric base-point-freeness then gives relative base-point-freeness on $\tilde X$, and hence contractions of all faces in arbitrary characteristic.
We also identify these contractions locally with base changes of toric contractions.
For small extremal ray contractions, flat base change then reduces finite generation of the flip algebra to the toric case.

\subsubsection*{Organization of the paper}

In \cref{sec:setup}, we fix notation, introduce elementary curves, and recall the required toric geometry.
In \cref{sec:model}, we construct local coordinate models for the blowup sequence.
In \cref{sec:cone}, we prove \cref{introthm:cone}, derive further numerical constraints on classes spanning extremal rays, and realize every vector allowed by the definition of an elementary curve.
In \cref{sec:contractions}, we prove relative base-point-freeness and construct contractions of all faces.
We describe the exceptional loci of extremal ray contractions, identify the targets of face contractions as normalized blowups, and construct flips of small extremal ray contractions.
We also characterize when these flips can be realized by reordering the centers.
The section ends with the two-center construction and explicit three-center examples of these contractions and flops.

\subsubsection*{Acknowledgements}

The author is grateful to his Ph.D. advisor, Keiji Oguiso, for his invaluable guidance and encouragement.
The author was supported by JSPS KAKENHI Grant Number JP24KJ0856.

The author used the AI assistants Claude (Anthropic) and GPT (OpenAI) to help with his research and the writing of this paper.
The author is responsible for the content of the paper.
This research was supported in part by access to OpenAI models provided through the ChatGPT program for academic researchers.

\section{Setup and preliminaries}\label{sec:setup}

In this section, we fix notation and develop the basic geometry of the successive blowup construction used throughout the paper, including its elementary curves and its toric coordinate model.

\subsection{Notation and conventions}\label{ss:conventions}

We work over an algebraically closed field $k$ of arbitrary characteristic.
A \emph{variety} is an integral separated scheme of finite type over $k$.

For a locally free sheaf $F$ of finite rank on a scheme $X$, we use Grothendieck's convention: $\PP_X(F)$ parametrizes invertible quotients of $F$.

Let $f\colon X\to S$ be a projective morphism of varieties with $X$ normal.
We write $N_1(X/S)$ for the $\RR$-vector space of $f$-contracted curves modulo numerical equivalence over $S$.
We write $N^1(X/S)$ for the $\RR$-vector space of $\RR$-Cartier divisors on $X$ modulo numerical equivalence over $S$.
The \emph{relative cone of curves} $\NEb(X/S)\subseteq N_1(X/S)$ is the closure of the convex cone generated by the classes $[C]$ of $f$-contracted curves $C\subseteq X$.
The \emph{relative Picard number} is $\rho(X/S)\coloneq\dim_\RR N^1(X/S)$.
When $S=\Spec k$, we omit $S$ from this notation and write $N_1(X)$, $N^1(X)$, $\NEb(X)$, and $\rho(X)$.

A \emph{face} of $\NEb(X/S)$ is a convex subcone $F\subseteq\NEb(X/S)$ with the following property: if $\alpha_1,\alpha_2\in\NEb(X/S)$ and $\alpha_1+\alpha_2\in F$, then $\alpha_1,\alpha_2\in F$.
A one-dimensional face is called an \emph{extremal ray}.
A \emph{supporting divisor} of a face $F$ is an $f$-nef $\RR$-Cartier divisor $D$ such that
\[
	F=\sset{\alpha\in\NEb(X/S)}{D\cdot\alpha=0}.
\]
Thus $D$ is nonnegative on $\NEb(X/S)$ and vanishes precisely on $F$.

The \emph{contraction of a face $F$ over $S$}, if it exists, is a projective surjective morphism of varieties $g_F\colon X\to X_F$ over $S$ such that $(g_F)_*\Ob X=\Ob{X_F}$ and $g_F$ contracts precisely those $f$-contracted curves $C\subseteq X$ whose numerical classes $[C]$ lie in $F$.
It is unique up to a unique isomorphism of the target commuting with the morphisms from $X$.
If $F$ is an extremal ray and $g_F$ is birational, then we call $g_F$ \emph{small} if $\Exc(g_F)$ has codimension at least two in $X$, and \emph{divisorial} if $\Exc(g_F)$ is a prime divisor.
We refer to \cite{KollarMori} for the standard terminology concerning cones of curves and contractions.

If $X$ is $\QQ$-factorial, then the exceptional locus of a birational extremal ray contraction is a prime divisor as soon as it contains a codimension-one component.
Without $\QQ$-factoriality, a birational extremal ray contraction can be neither divisorial nor small in the above sense \cite[Example~4.1]{FujinoCompletions}.
In \cref{set:main}, however, we will see that every extremal ray contraction of $\NEb(\tilde X/X)$ is either divisorial or small, even without assuming $\QQ$-factoriality (\cref{cor:typen}).

For a fan $\Sigma$ in a lattice $N$ with dual lattice $M$, we write $X_\Sigma$ for the associated toric variety, obtained by gluing the affine toric varieties $U_\sigma\coloneq\Spec k[\sigma^\vee\cap M]$ for $\sigma\in\Sigma$ along the open subsets corresponding to common faces.

\subsection{Successive blowups and elementary curves}\label{ss:tower}

We now describe the successive blowup construction and define elementary curves, whose numerical classes will be shown to generate the relative cone of curves.

\begin{defi}\label{def:snc}
	Let $X$ be a variety and let $Z_1,\dotsc,Z_n\subseteq X$ be (closed) subvarieties.
	The subvarieties $Z_i$ \emph{have simple normal crossings} if, at every closed point $p\in \bigcup_i Z_i$, there is a regular system of parameters $x_1,\dots,x_d$ of $\Ob{X,p}$ and there are subsets $J_i\subseteq\{1,\dots,d\}$ such that every $Z_i$ containing $p$ satisfies
	\[
		\mathcal I_{Z_i,p}=(x_j\mid j\in J_i).
	\]
\end{defi}

Note that if $Z_1,\dotsc,Z_n$ are subvarieties with simple normal crossings, then $X$ is smooth along $\bigcup_i Z_i$, each $Z_i$ is smooth, and, for every nonempty subset $I\subseteq\{1,\dots,n\}$, the intersection $\bigcap_{i\in I} Z_i$ is either empty or smooth.

We will work in the following setup.

\begin{setting}\label{set:main}
	Let $X$ be a normal variety, and let $Z_1,\dots,Z_n\subseteq X$ be subvarieties.
	Assume that
	\begin{itemize}
		\item $c_i\coloneq\codim_X Z_i\ge 2$,
		\item $Z_h\not\supseteq Z_i$ for $h<i$, and
		\item $Z_1,\dotsc,Z_n$ have simple normal crossings.
	\end{itemize}
	Let
	\[
	\begin{tikzcd}
		\tilde X\coloneq X^{(n)}\arrow[r,"\pi_n"]
			& X^{(n-1)}\arrow[r]
			& \dotsb\arrow[r]
			& X^{(1)}\arrow[r,"\pi_1"]
			& X^{(0)}\coloneq X
	\end{tikzcd}
	\]
	be the sequence of blowups defined inductively as follows:
	\begin{itemize}
		\item $X^{(0)}\coloneq X$,
		\item $Z_h^{(0)}\coloneq Z_h$ for $1\le h\le n$,
		\item $X^{(i)}\coloneq\Bl_{Z_i^{(i-1)}}X^{(i-1)}$, and
		\item $Z_h^{(i)}\coloneq(\pi_i)^{-1}_*Z_h^{(i-1)}$ for $h>i$.
	\end{itemize}
	Set $\pi\coloneq\pi_1\circ\dotsb\circ\pi_n\colon\tilde X\to X$.
	For each $i$, let $E_i$ denote the pullback to $\tilde X$ of the exceptional divisor
	$E_i^{(i)}\subseteq X^{(i)}$ of $\pi_i$.
\end{setting}

We first record that the simple normal crossings condition is preserved under the successive blowups in \cref{set:main}.

\begin{lem}\label{lem:inherit}
	In \cref{set:main}, the subvarieties $Z_{i+1}^{(i)},\dotsc,Z_n^{(i)}\subseteq X^{(i)}$ have simple normal crossings for $1\le i<n$.
	In particular, $\tilde X$ is normal.
	If $X$ is $\QQ$-factorial (resp. smooth), then $\tilde X$ is $\QQ$-factorial (resp. smooth).
\end{lem}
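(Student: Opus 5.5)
By induction on $i$, it suffices to treat a single step. Let $Z_1,\dotsc,Z_n\subseteq X$ be as in \cref{set:main} and let $X'=\Bl_{Z_1}X$. We will show that the strict transforms $Z_2',\dotsc,Z_n'$ again have simple normal crossings, have codimension at least two, and satisfy $Z'_h\not\supseteq Z'_i$ for $1<h<i$. Then $X^{(1)}$ together with $Z_2^{(1)},\dotsc,Z_n^{(1)}$ is again in \cref{set:main}, and the statement for all $i$ follows by iterating.

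The codimension and non-containment conditions are routine. The strict transform $Z'_h$ is birational to $Z_h$, and $Z_h\not\subseteq Z_1$ because $Z_1\not\supseteq Z_h$. Hence $\codim_{X'}Z'_h=c_h\ge2$. If $Z'_h\supseteq Z'_i$, then applying the blowdown gives $Z_h\supseteq Z_i$, which is excluded.

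The main step is the local computation for simple normal crossings. Away from $Z_1$ the blowup is an isomorphism, so we fix a closed point $q\in X'$ over $p\in Z_1$. We choose parameters $x_1,\dotsc,x_d$ at $p$ and index sets $J_i$ as in \cref{def:snc}, and set $J\coloneq J_1$. The blowup is covered by the charts $U_k$ for $k\in J$. On $U_k$ the functions $y_k=x_k$, $y_j=x_j/x_k$ for $j\in J\setminus\{k\}$, and $y_j=x_j$ for $j\notin J$ form a regular system of parameters at $q$, after translating $y_j$ by constants for $j\in J\setminus\{k\}$. More precisely, $q$ has coordinates $y_j=a_j$, and we replace $y_j$ by $y_j-a_j$.

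For $h>1$ with $p\in Z_h$, we compute the strict transform of $(x_j\mid j\in J_h)$ as follows.
\begin{itemize}
\item If $k\notin J_h$, the ideal becomes $(y_j\mid j\in J_h\cap J)+(y_j\mid j\in J_h\setminus J)$, since $x_j=y_jy_k$ and we saturate by $y_k$.
\item If $k\in J_h$, then $J_h\not\supseteq J$: otherwise $Z_h\subseteq Z_1$, which is excluded. The strict transform is then $\emptyset$ on this chart, because its total transform contains $y_k$ times a unit after saturation.
\end{itemize}

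The delicate point is that the translation $y_j\mapsto y_j-a_j$ must be compatible with every $Z_h'$ passing through $q$. If $q\in Z'_h$ with $k\notin J_h$, then $a_j=0$ for all $j\in J_h\cap J$. So the translated coordinates agree with the untranslated ones on exactly the coordinates that occur in the ideals of the $Z'_h$ through $q$. The coordinates that actually get translated, namely those with $a_j\ne0$, appear in no ideal of a $Z_h'$ through $q$. Therefore every $Z'_h\ni q$ is cut out by a subset of one common regular system of parameters, which is the simple normal crossings condition at $q$.

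The remaining claims follow from this. Because the centers are smooth with simple normal crossings, every blowup center is a smooth subvariety contained in the smooth locus. Hence each $X^{(i)}$ is normal, being smooth over the center and isomorphic to $X^{(i-1)}$ elsewhere, and $\tilde X$ is normal. If $X$ is smooth, every step blows up a smooth center in a smooth variety, so $\tilde X$ is smooth. For $\QQ$-factoriality, we argue one step at a time. Let $X'\to X$ be the blowup along a smooth center $Z\subseteq X_{\mathrm{sm}}$, with exceptional divisor $E$. Let $D'$ be a prime divisor on $X'$ and $D$ its pushforward. Some $mD$ is Cartier, so $m D'=\pi^*(mD)-aE$ for some integer $a$. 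Since $E$ is Cartier (the blowup of a Cartier-defined ideal sheaf is invertible), $mD'$ is Cartier. Thus $X'$ is $\QQ$-factorial, and by induction so is $\tilde X$.
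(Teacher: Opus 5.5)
Your proof is correct and follows essentially the same route as the paper. Both use the standard-chart computation showing that the strict transforms are coordinate subspaces, and you additionally make explicit the translation $y_j\mapsto y_j-a_j$ needed for a regular system of parameters at points away from the chart origin, which the paper leaves implicit. Both also use the same local argument for normality and smoothness. The only small difference is for $\QQ$-factoriality: you write $mD'=\pi^*(mD)-aE$ with $E$ Cartier, whereas the paper notes that $X^{(i)}$ is covered by a smooth neighborhood of the exceptional divisor and an open set isomorphic to an open subset of the $\QQ$-factorial $X^{(i-1)}$. Either argument works.
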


\begin{proof}
	We first prove the simple normal crossings assertion for $i=1$ and then apply the same argument inductively.
	The condition
	\[
		Z_h\not\supseteq Z_{h'}\qquad (h<h')
	\]
	is preserved under strict transforms.

	Away from $Z_1$, the blowup $\pi_1$ is an isomorphism, so we work locally near a closed point $p\in Z_1$.
	Shrinking $X$ around $p$, we may assume that $Z_h=\emptyset$ whenever $p\notin Z_h$.
	By the simple normal crossings assumption, after shrinking $X$ further, we may choose \'etale coordinates $x_1,\dotsc,x_d$ on $X$ such that
	\[
		Z_h=V(x_j\mid j\in J_h)
	\]
	for every nonempty $Z_h$, where $J_h\subseteq\{1,\dotsc,d\}$.

	Fix $j_0\in J_1$, and let $U_{j_0}\subseteq X^{(1)}$ be the standard blowup chart corresponding to the generator $x_{j_0}$ of $\mathcal I_{Z_1}$.
	On this chart, we have \'etale coordinates
	\[
		y_{j_0}=x_{j_0},\qquad
		y_j=\frac{x_j}{x_{j_0}}\quad(j\in J_1\setminus\{j_0\}),\qquad
		y_j=x_j\quad(j\notin J_1).
	\]
	The exceptional divisor on $U_{j_0}$ is defined by $y_{j_0}=0$.
	For each $h\ge2$ with $Z_h\ne\emptyset$, saturating the pullback of $\mathcal I_{Z_h}$ with respect to $y_{j_0}$ gives
	\[
		Z_h^{(1)}\cap U_{j_0}
		=\begin{cases}
			\emptyset & j_0\in J_h,\\
			V(y_j\mid j\in J_h) & j_0\notin J_h.
		\end{cases}
	\]
	Thus, on every standard blowup chart, the nonempty strict transforms of $Z_2,\dotsc,Z_n$ are coordinate subspaces.
	Since $p\in Z_1$ was arbitrary, $Z_2^{(1)},\dotsc,Z_n^{(1)}$ have simple normal crossings, and the first assertion follows by induction.

	We now prove that $\tilde X$ is normal.
	Inductively, assume that $X^{(i-1)}$ is normal.
	The simple normal crossings condition implies that $X^{(i-1)}$ is smooth along $Z_i^{(i-1)}$, and that $Z_i^{(i-1)}$ is smooth.
	Therefore $X^{(i)}=\Bl_{Z_i^{(i-1)}}X^{(i-1)}$ is smooth over a neighborhood of $Z_i^{(i-1)}$, while away from the exceptional divisor it is isomorphic to the open subset $X^{(i-1)}\setminus Z_i^{(i-1)}$ of the normal variety $X^{(i-1)}$.
	Hence $X^{(i)}$ is normal.
	Since $X=X^{(0)}$ is normal, induction gives that $\tilde X=X^{(n)}$ is normal.

	The same open cover shows that if $X^{(i-1)}$ is $\QQ$-factorial, then $X^{(i)}$ is $\QQ$-factorial.
	Indeed, it is smooth over a neighborhood of the center and isomorphic to $X^{(i-1)}$ away from the center.
	Thus, if $X$ is $\QQ$-factorial, then induction gives that $\tilde X$ is $\QQ$-factorial.

	Finally, if $X$ is smooth, then each $X^{(i)}$ is smooth, since it is the blowup of a smooth variety along a smooth center.
	Hence $\tilde X$ is smooth.
\end{proof}

\begin{lem}\label{lem:divisordecomposition}
	In \cref{set:main}, let $D$ be a Cartier divisor on $\tilde X$.
	Then $D_0\coloneq\pi_*D$ is a Cartier divisor on $X$, and there are unique integers $a_1,\dotsc,a_n$ such that
	\[
		D=\pi^*D_0-\sum_{i=1}^n a_iE_i.
	\]
\end{lem}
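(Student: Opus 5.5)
We argue by induction on $n$, peeling off one blowup at a time. The key input is the following one-step statement. Let $Y$ be a normal variety, $Z\subseteq Y$ a smooth subvariety of codimension $c\ge2$ along which $Y$ is smooth, $\pi_1\colon Y'=\Bl_ZY\to Y$ the blowup, and $E'$ its exceptional divisor. We claim that every Cartier divisor $D'$ on $Y'$ can be written uniquely as $D'=\pi_1^*D_0-aE'$ with $D_0=(\pi_1)_*D'$ Cartier and $a\in\ZZ$.

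For this claim, first note that $\pi_1$ is an isomorphism over $Y\setminus Z$. Hence $D_0\coloneq(\pi_1)_*D'$ is a Weil divisor on $Y$ that is Cartier on $Y\setminus Z$. Since $Y$ is smooth in a neighborhood $U$ of $Z$, every Weil divisor on $U$ is Cartier. Gluing these two statements shows that $D_0$ is Cartier on all of $Y$. Then $D'-\pi_1^*D_0$ is a Cartier divisor on $Y'$ that is trivial off $E'$. Because $Y'$ is normal (\cref{lem:inherit}), it is determined by its Weil divisor, which is supported on $E'$. Since $c\ge 2$, $E'$ is a prime divisor, so $D'-\pi_1^*D_0=-aE'$ for a unique integer $a$.

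To iterate, we apply the claim to $\pi_n\colon X^{(n)}\to X^{(n-1)}$. The hypotheses hold because, by \cref{lem:inherit}, $Z_n^{(n-1)}$ is smooth and $X^{(n-1)}$ is smooth along it, being part of a simple normal crossings collection. This gives
\[
D=\pi_n^*D^{(n-1)}-a_nE_n^{(n)},
\]
with $D^{(n-1)}$ Cartier on $X^{(n-1)}$. Applying the inductive hypothesis to $D^{(n-1)}$ and pulling back along $\pi_n$ turns each $E_i^{(i)}$ into its pullback $E_i$. Since push-forwards compose, $\pi_*D=(\pi_1\circ\dotsb\circ\pi_{n-1})_*D^{(n-1)}=D_0$, and existence follows.

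For uniqueness, suppose $\sum_i b_iE_i=0$ as Cartier divisors with some $b_i\ne0$. Choose the largest such $i$. Its strict transform $E_i^{(i)}$ appears in $E_i$ with coefficient $1$, and it appears in $E_h$ for $h<i$ with coefficient $0$, because $E_h$ is pulled back from $X^{(h)}$ and the $i$-th exceptional divisor is not the pullback of anything from an earlier stage. Pushing forward to $X^{(i)}$ kills $E_{h}$ for $h>i$ and leaves the coefficient of $E_i^{(i)}$ equal to $b_i\ne0$, a contradiction. Uniqueness can also be obtained inductively from the one-step uniqueness.

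The only delicate point is checking that $D_0$ is Cartier, which uses that $Y$ is smooth along the center while normality alone would not suffice. \Cref{lem:inherit} supplies exactly this at each stage.
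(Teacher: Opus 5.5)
Your proof is correct and rests on the same two ideas as the paper's: smoothness near the centers makes the pushforward Cartier, and the remaining difference is supported on exceptional divisors. The difference is organizational. You induct over single blowups, whereas the paper argues once for the composite $\pi$. There, $X$ is smooth near $\bigcup_iZ_i$ and $\pi$ is an isomorphism over the complement, so $D_0$ is Cartier, and $D-\pi^*D_0$ is then a unique integral combination of the distinct prime divisors $E_1,\dotsc,E_n$. The direct route is shorter and gives existence and uniqueness at once. Yours invokes \cref{lem:inherit} at every intermediate stage, but makes the compatibility of pushforwards and pullbacks explicit.

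One justification in your first uniqueness argument needs repair. The coefficient of the strict transform of $E_i^{(i)}$ in $E_h$ ($h<i$) does not vanish merely because ``the $i$-th exceptional divisor is not a pullback from an earlier stage''. Pulling back a divisor $G$ along a blowup with center $Z$ gives the new exceptional divisor coefficient $\operatorname{mult}_ZG$, which is positive whenever $Z\subseteq\operatorname{Supp}G$. The correct reason is the hypothesis $Z_h\not\supseteq Z_i$: the generic point of $Z_i^{(i-1)}$ lies over that of $Z_i$, which is not in $Z_h$, so $Z_i^{(i-1)}$ is not contained in the support of the pullback of $E_h^{(h)}$. The same observation shows that each $E_h$ equals the strict transform of $E_h^{(h)}$ and is therefore prime, which is exactly what the paper's one-line uniqueness uses.

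Your alternative inductive uniqueness (push forward along $\pi_n$, then induct) sidesteps this and is fine as stated. Similarly, $E'$ is prime because $Z$ is irreducible; $c\ge2$ is what makes $E'$ exceptional, so that $(\pi_1)_*(\pi_1^*D_0-aE')=D_0$.
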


\begin{proof}
	The variety $X$ is smooth on a neighborhood of $\bigcup_iZ_i$, so the Weil divisor $D_0$ is Cartier there.
	Over $X\setminus\bigcup_iZ_i$, the morphism $\pi$ is an isomorphism, so $D_0$ is Cartier there as well.
	Hence $D_0$ is Cartier on $X$.

	Since $D-\pi^*D_0$ is supported on the exceptional prime divisors $E_1,\dotsc,E_n$, it is a unique integral combination of the $E_i$.
	This proves the assertion.
\end{proof}

To formulate the numerical conditions below, we first introduce some notation.

\begin{defi}
	In \cref{set:main}, let $C\subseteq\tilde X$ be a curve contracted by $\pi$.
	\begin{itemize}
		\item For $1\le i\le n$, let $C_i\coloneq E_i\cdot C$.
		\item The \emph{source} of $C$ is the minimal $i_0$ such that $C_{i_0}\ne0$.
	\end{itemize}
\end{defi}

The divisor classes $[E_1],\dotsc,[E_n]$ form a basis of $N^1(\tilde X/X)$.
Thus the intersection pairing gives an isomorphism
\[
	N_1(\tilde X/X)\isomarrow\RR^n,\qquad
	\alpha\longmapsto(E_1\cdot\alpha,\dotsc,E_n\cdot\alpha).
\]
We therefore identify the relative numerical class $[C]$ of a $\pi$-contracted curve $C$ with its intersection vector $(C_1,\dotsc,C_n)$.

For $0\le i\le n$, let $C^{(i)}\subseteq X^{(i)}$ denote the image of $C$.
The source $i_0$ is the unique index for which $C^{(i_0)}$ is a curve and $C^{(i_0-1)}$ is a point.

\begin{defi}\label{def:elementary}
	In \cref{set:main}, a $\pi$-contracted curve $C\subseteq\tilde X$ with source $i_0$ is called an \emph{elementary curve} if $C_{i_0}=-1$ and, for each $i>i_0$,
	\[
		-\bigcard{\sset{h<i}{C_h=1}}\le C_i\le1.
	\]
\end{defi}

Geometrically, since $C_{i_0}=-1$, the induced map $C\to C^{(i_0)}$ is birational, and the curve $C^{(i_0)}$ is a line in a fiber of the blowup $\pi_{i_0}\colon X^{(i_0)}\to X^{(i_0-1)}$.
Furthermore, if $C_i<0$ with $i>i_0$, then the line $C^{(i_0)}$ is contained in $Z_i^{(i_0)}$.

In \cref{set:main}, for each $i$, let $\ell_i\in N_1(\tilde X/X)$ denote the class whose intersection vector has entry $-1$ in position $i$ and $0$ elsewhere.

\begin{lem}\label{lem:fiberclass}
	In \cref{set:main}, for every $i$, the class $\ell_i$ is represented by an elementary curve.
\end{lem}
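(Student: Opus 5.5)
We need a curve $C$ with $E_i\cdot C=-1$ and $E_h\cdot C=0$ for every $h\ne i$. Such a curve automatically has source $i$, $C_i=-1$, and $C_h=0$ for $h>i$; since $0$ lies between $-\bigcard{\sset{h<j}{C_h=1}}$ and $1$, it is then elementary. So everything reduces to constructing the curve.

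The plan is to pick a general point of $Z_i^{(i-1)}$ and a line in the fiber of $\pi_i$ over it, and lift this line to $\tilde X$.

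\textbf{Choice of point.} The strict transform $Z_i^{(i-1)}$ is nonempty: $Z_i$ is not contained in any $Z_h$ with $h<i$, so it is not swallowed by earlier centers. Choose a closed point $q\in Z_i$ not lying in $\bigcup_{h\ne i}Z_h$ minus those $Z_h$ that contain $Z_i$. More precisely, take $q\in Z_i\setminus\bigcup_{h:\,Z_h\not\supseteq Z_i}Z_h$, which is possible since $Z_i$ is irreducible. Then:
\begin{itemize}
\item Every $Z_h$ through $q$ contains $Z_i$, so $h>i$.
\item For $h>i$ with $Z_h\supsetneq Z_i$, the hypotheses are satisfied.
\item Since $q$ avoids every $Z_h$ with $h<i$, the maps $\pi_1,\dots,\pi_{i-1}$ are isomorphisms over a neighborhood $U$ of $q$.
\end{itemize}
Hence near $q$ we have $X^{(i-1)}\isom X$, and the fiber $\pi_i^{-1}(q)\isom\PP^{c_i-1}$.

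\textbf{Choice of line.} In the local coordinates of \cref{def:snc}, $Z_i=V(x_j\mid j\in J_i)$. Each $Z_h\supseteq Z_i$ with $h>i$ is $V(x_j\mid j\in J_h)$ with $J_h\subsetneq J_i$. Its strict transform meets the fiber $\PP^{c_i-1}=\PP(\text{span of }x_j, j\in J_i)$ in the linear subspace $\Lambda_h$ cut out by $x_j=0$ for $j\in J_h$, which is a proper linear subspace. Choose a line $L$ in this fiber that is not contained in any $\Lambda_h$ and meets none of them. Meeting nothing is possible when every $\Lambda_h$ has codimension $\ge2$, i.e. when $\abs{J_h}\ge2$, and this holds since $c_h\ge2$. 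Then $L$ is disjoint from all later strict centers $Z_h^{(i)}$ with $h>i$, so the later blowups are isomorphisms near $L$.

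\textbf{Intersection numbers.} Let $C\subseteq\tilde X$ be the isomorphic preimage of $L$. Then:
\begin{itemize}
\item $E_i\cdot C=E_i^{(i)}\cdot L=\deg\Ob{}(-1)|_L=-1$.
\item For $h<i$, $E_h\cdot C=0$, because $E_h$ is pulled back from $X^{(h)}$, where $C$ maps to a point.
\item For $h>i$, $E_h\cdot C=0$, because $C$ is disjoint from the support of $E_h$: its image in $X^{(h-1)}$ avoids $Z_h^{(h-1)}$.
\end{itemize}
This gives exactly the class $\ell_i$.

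The main obstacle is the line-avoidance step, i.e. justifying that the traces of later centers on the fiber are linear subspaces of codimension $\ge2$. Codimension $\ge2$ follows from $c_h\ge2$ because $\abs{J_h}=c_h$. The traces are linear because of the blowup chart description in the proof of \cref{lem:inherit}. Given this, a general line works: lines meeting a fixed codimension-$\ge2$ subspace form a proper closed subset of the Grassmannian. If $c_i=2$ the fiber is itself a $\PP^1$, and then $J_h\subsetneq J_i$ with $\abs{J_h}\ge2$ is impossible, so there are no such $Z_h$ to avoid.
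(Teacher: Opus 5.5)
Your proposal is correct and follows essentially the same route as the paper. You choose a point of $Z_i$ off every center not containing $Z_i$, so that the earlier blowups are isomorphisms there, note that each later center through the point meets the fiber $\PP^{c_i-1}$ in a linear subspace of codimension $c_h\ge2$, take a general line avoiding these subspaces, and read off $E_i\cdot C=-1$ and $E_h\cdot C=0$ for $h\ne i$.
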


\begin{proof}
	Choose a closed point $p\in Z_i$ outside every center that does not contain $Z_i$.
	By the assumptions in \cref{set:main}, $p$ lies outside all earlier centers.
	Thus the first $i-1$ blowups are isomorphisms near $p$, and we identify $p$ with its inverse image in $X^{(i-1)}$.

	Shrinking $X$ around $p$, choose coordinates with $\Ic_{Z_h}=(x_j\mid j\in J_h)$ for every nonempty $Z_h$.
	Via the above local isomorphism, we also regard these as coordinates on $X^{(i-1)}$.
	The generators $x_j$ for $j\in J_i$ give homogeneous coordinates $y_j$ for $j\in J_i$ on the fiber $\pi_i^{-1}(p)\isom\PP^{c_i-1}$.
	For each $h>i$ with $Z_h\ne\emptyset$, our choice of $p$ gives $Z_i\subseteq Z_h$, so $J_i\supseteq J_h$.
	In these coordinates,
	\[
		Z_h^{(i)}\cap\pi_i^{-1}(p)=V(y_j\mid j\in J_h)\subseteq\pi_i^{-1}(p),
	\]
	so the intersection is a linear subspace of codimension $\card{J_h}=c_h\ge2$.

	A general line in $\pi_i^{-1}(p)$ avoids these finitely many linear subspaces, so all later blowups are isomorphisms near its strict transform $C\subseteq\tilde X$.
	We have $E_i\cdot C=-1$, and $E_h\cdot C=0$ for $h\ne i$ since $C$ is disjoint from $E_h$.
	Thus $C$ is an elementary curve of class $\ell_i$.
\end{proof}

The following example shows how a negative entry $C_i<0$ with $i>i_0$ arises from a section of a later exceptional divisor.

\begin{exa}\label{exa:forced}
	Let $X=\AA^4_{x_1,x_2,x_3,x_4}$ and
	\[
		Z_1=V(x_1,x_2),\qquad
		Z_2=V(x_1,x_3,x_4),\qquad
		Z_3=V(x_3,x_4).
	\]
	The exceptional fiber $C^{(1)}\coloneq\pi_1^{-1}(0)\isom\PP^1$ meets $Z_2^{(1)}$ in a single point and is contained in $Z_3^{(1)}$.
	After blowing up $Z_2^{(1)}$, the strict transform $C^{(2)}$ is still contained in $Z_3^{(2)}$, and
	\[
		N_{Z_3^{(2)}/X^{(2)}}\Bigr\rvert_{C^{(2)}}
		\isom\Ob{\PP^1}(-1)^{\oplus2}.
	\]
	Thus the third exceptional divisor admits a section $C\subseteq E_3$ over $C^{(2)}$ such that $E_3\cdot C=-1$.
	Its numerical class is
	\[
		[C]=(C_1,C_2,C_3)=(-1,1,-1).
	\]
	Hence $C$ is an elementary curve with source $1$.
\end{exa}

More generally, \cref{prop:realization} shows that every nonzero integer vector satisfying the numerical conditions in \cref{def:elementary} occurs as the intersection vector of an elementary curve spanning an extremal ray for a suitable choice of $X$ and centers $Z_1,\dotsc,Z_n$.

\subsection{Coordinate blowups and toric geometry}\label{ss:fan}

We next consider the coordinate model of \cref{set:main}, obtained by successively blowing up coordinate subspaces of affine space.
It will provide the basic local model for the relative geometry of $\tilde X\to X$.

\begin{setting}\label{set:coordinate}
	Let $d\ge2$ be an integer, and let $J_1,\dotsc,J_n\subseteq\{1,\dotsc,d\}$ be subsets satisfying
	\[
		\card{J_i}\ge2,\qquad
		J_h\not\subseteq J_i\quad(h<i).
	\]
	For $1\le i\le n$, set
	\[
		W_i\coloneq V(x_j\mid j\in J_i)\subseteq\AA^d.
	\]
	The subvarieties $W_1,\dotsc,W_n$ have simple normal crossings, and
	\[
		\codim_{\AA^d}W_i=\card{J_i}\ge2,\qquad
		W_h\not\supseteq W_i\quad(h<i).
	\]
	Thus they satisfy the assumptions of \cref{set:main}.

	Let
	\[
	\begin{tikzcd}
		\tilde Y\coloneq Y^{(n)}
			\arrow[r]
			&Y^{(n-1)}
			\arrow[r]
			&\dotsb
			\arrow[r]
			&Y^{(1)}
			\arrow[r]
			&Y^{(0)}\coloneq\AA^d
	\end{tikzcd}
	\]
	be the resulting sequence of blowups, and let $\pi_Y\colon\tilde Y\to\AA^d$ denote their composite.
	For each $i$, let $F_i$ denote the pullback to $\tilde Y$ of the exceptional divisor of $Y^{(i)}\to Y^{(i-1)}$.

	We now describe this construction torically.
	Let $N\coloneq\ZZ^d$ with standard basis $e_1,\dotsc,e_d$, and let $\Sigma=\Sigma^{(0)}$ be the fan of $\AA^d$, consisting of the faces of
	\[
		\Cone(e_1,\dotsc,e_d)\subseteq N_\RR.
	\]
	For $1\le i\le n$, set
	\[
		\sigma_i\coloneq\Cone(e_j\mid j\in J_i),\qquad
		v_i\coloneq\sum_{j\in J_i}e_j,\qquad
		\rho_{v_i}\coloneq\Cone(v_i).
	\]
	Let $\Sigma^{(i)}$ be the fan obtained from $\Sigma$ by successive star subdivisions at $v_1,\dotsc,v_i$.
	As we verify below, at the $i$-th step the cone $\sigma_i$ is still a cone of $\Sigma^{(i-1)}$, and the subdivision is precisely the star subdivision along $\sigma_i$.
	Set
	\[
		\tilde\Sigma\coloneq\Sigma^{(n)}.
	\]
\end{setting}

We first check that the cones corresponding to the blowup centers survive until the stages at which they are subdivided.

\begin{lem}\label{lem:conesurvive}
	In \cref{set:coordinate}, let $0\le h\le n$ and $1\le i\le n$.
	Then the following are equivalent:
	\begin{enumerate}
		\item\label{item:h<i} $h<i$,
		\item\label{item:sigma_i_in_Sigma^h} $\sigma_i\in\Sigma^{(h)}$, and
		\item\label{item:exists_sigma_containing_sigma_i} there exists a cone $\sigma\in\Sigma^{(h)}$ such that $\sigma_i\subseteq\sigma$.
	\end{enumerate}
\end{lem}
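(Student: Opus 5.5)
The three conditions form a cycle: I will show (1)$\Rightarrow$(2) by induction on $h$, note that (2)$\Rightarrow$(3) is trivial, and prove (3)$\Rightarrow$(1) by contraposition.

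The basic tool is how a star subdivision acts. Recall that the star subdivision of a simplicial fan $\Sigma'$ at $v=\sum_{j\in J}e_j$, with $\tau=\Cone(e_j\mid j\in J)\in\Sigma'$, does two things. It keeps every cone not containing $\tau$. It replaces each cone $\sigma\supseteq\tau$ by the cones $\Cone(v)+\sigma'$, where $\sigma'$ ranges over the faces of $\sigma$ that do not contain $\tau$. The key observation is that every cone of $\Sigma^{(h)}$ either is a cone of the original fan $\Sigma$, or contains at least one of the new rays $\rho_{v_1},\dotsc,\rho_{v_h}$. Moreover, the only standard basis rays remaining are the $e_j$.

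\emph{(1)$\Rightarrow$(2).} I induct on $h$, with $h<i$. For $h=0$ the cone $\sigma_i$ is a face of the positive orthant. Suppose $\sigma_i\in\Sigma^{(h-1)}$ and $h<i$. The $h$-th star subdivision removes exactly the cones containing $\sigma_h$. Since $J_h\not\subseteq J_i$, the cone $\sigma_i$ does not contain $\sigma_h$, so $\sigma_i$ survives into $\Sigma^{(h)}$. The same step justifies the claim in \cref{set:coordinate} that $\sigma_i\in\Sigma^{(i-1)}$, so that the $i$-th step really is the star subdivision along $\sigma_i$.

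\emph{(3)$\Rightarrow$(1).} Assume $h\ge i$; I show that no cone of $\Sigma^{(h)}$ contains $\sigma_i$. Let $\sigma\in\Sigma^{(h)}$ with $\sigma_i\subseteq\sigma$. Since $\Sigma^{(h)}$ is a fan, it suffices to show that $\sigma_i$ is not contained in any cone of $\Sigma^{(i)}$, because $\Sigma^{(h)}$ refines $\Sigma^{(i)}$. After the $i$-th step, each cone of $\Sigma^{(i)}$ lies in some cone of $\Sigma^{(i-1)}$ and has one of two forms.
\begin{itemize}
\item It is an old cone not containing $\sigma_i$. Then it is a cone of a simplicial fan, and I need to check it cannot contain $\sigma_i$ as a subset. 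This holds because $\sigma_i\in\Sigma^{(i-1)}$ and cones of a fan meet in faces: an old cone $\tau$ with $\sigma_i\subseteq\tau$ would have $\sigma_i=\sigma_i\cap\tau$ as a face, so $\tau\supseteq\sigma_i$ as a face, and such cones were removed.
\item It is a cone $\Cone(v_i)+\sigma'$, where $\sigma'$ is a face of some $\tau\supseteq\sigma_i$ with $\sigma'\not\supseteq\sigma_i$. Suppose it contains $\sigma_i$. Then $\Cone(v_i)+\sigma'\subseteq\tau$, with $v_i,\sigma'$ independent and forming a simplicial cone. Write $\sigma'=\tau'\cap\tau$ for a face $\tau'$. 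Each $e_j$ with $j\in J_i$ lies in $\tau$ and in $\Cone(v_i)+\sigma'$. Expressing $e_j$ in the simplicial basis of $\tau$ shows $e_j$ is a ray of $\tau$. On the other hand, the decomposition $e_j=a v_i+s$ with $s\in\sigma'$ forces $e_j\in\sigma'$ (taking $a=0$), because $v_i$ has positive coefficient on all of $J_i$ and $\card{J_i}\ge2$. Hence all $e_j$, $j\in J_i$, lie in $\sigma'$, so $\sigma'\supseteq\sigma_i$, a contradiction.
\end{itemize}

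The main obstacle is this last containment step. Its difficulty is handling support containment in a general cone of $\Sigma^{(h-1)}$, which is not a positive orthant. I would handle it using simpliciality together with the fact that each cone of $\Sigma^{(i-1)}$ is unimodular, being spanned by part of a basis obtained from $e$'s and $v$'s. Then coefficients in the basis of $\tau$ are unique, which gives the argument above. For the refinement step from $\Sigma^{(i)}$ to $\Sigma^{(h)}$, it suffices that a subset contained in a cone of a refinement is contained in a cone of the coarser fan. Applying the above to a minimal cone containing $\sigma_i$ finishes the argument.
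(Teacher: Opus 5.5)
Your proof is correct and has the same structure as the paper's: (1)$\Rightarrow$(2) because no earlier center cone $\sigma_{h'}$ ($h'\le h<i$) is contained in $\sigma_i$, (2)$\Rightarrow$(3) trivially, and (3)$\Rightarrow$(1) by showing that no cone of $\Sigma^{(i)}$ contains $\sigma_i$ and then passing to the refinements $\Sigma^{(h)}$. Your case-by-case coefficient check of that last claim is valid, but the paper obtains it in one line from the fact that the new ray $\rho_{v_i}$ passes through $\relint(\sigma_i)$: a cone containing $\sigma_i$ would contain $v_i$, hence have $\rho_{v_i}$ as a face, and this would force every $e_j$ with $j\in J_i$ onto that single ray, which is impossible since $\card{J_i}\ge2$.
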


\begin{proof}
	Assume first that $h<i$.
	For every $h'\le h$, the assumption $J_{h'}\not\subseteq J_i$ gives $\sigma_{h'}\not\subseteq\sigma_i$.
	Hence none of the first $h$ star subdivisions subdivides $\sigma_i$.
	Therefore $\sigma_i\in\Sigma^{(h)}$, proving that
	\cref{item:h<i} implies \cref{item:sigma_i_in_Sigma^h}.

	The implication
	\cref{item:sigma_i_in_Sigma^h}$\Rightarrow$\cref{item:exists_sigma_containing_sigma_i} is immediate by taking $\sigma\coloneq\sigma_i$.

	Finally, suppose that $h\ge i$.
	The $i$-th star subdivision introduces the ray $\rho_{v_i}$ through the relative interior of $\sigma_i$.
	Consequently, no cone of $\Sigma^{(i)}$ contains $\sigma_i$.
	The same remains true in every subsequent refinement $\Sigma^{(h)}$.
	Thus \cref{item:exists_sigma_containing_sigma_i} cannot hold.
	This proves
	\cref{item:exists_sigma_containing_sigma_i}$\Rightarrow$\cref{item:h<i}.
\end{proof}

We can now identify the successive blowup with the toric sequence of star subdivisions.

\begin{lem}\label{lem:validstar}
	In \cref{set:coordinate}, let $1\le i\le n$.
	\begin{enumerate}
		\item\label{item:Yi_toric}
			The variety $Y^{(i)}$ is the toric variety $X_{\Sigma^{(i)}}$ of the fan $\Sigma^{(i)}$.
		\item\label{item:pii_toric}
			The morphism $Y^{(i)}\to Y^{(i-1)}$ is the toric morphism associated with the star subdivision along $\sigma_i$.
		\item\label{item:Wi_toric}
			For $h>i$, the subvariety $W_h^{(i)}\subseteq Y^{(i)}$ is the orbit closure $V(\sigma_h)$ in $X_{\Sigma^{(i)}}$ corresponding to $\sigma_h\in\Sigma^{(i)}$.
		\item\label{item:Fi}
			On $\tilde Y=X_{\tilde\Sigma}$, the divisor $F_i$ is the torus-invariant divisor $D_{v_i}=V(\rho_{v_i})$.
	\end{enumerate}
\end{lem}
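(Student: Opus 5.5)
I will prove all four assertions simultaneously by induction on $i$, starting from $Y^{(0)}=\AA^d=X_{\Sigma^{(0)}}$. For $i=0$, the analogue of \cref{item:Wi_toric} says that $W_h=V(x_j\mid j\in J_h)$ equals the orbit closure $V(\sigma_h)$ in $\AA^d$. This is the standard description of torus-invariant subvarieties of affine space: the orbit closure of $\Cone(e_j\mid j\in J)$ is cut out by the coordinates $x_j$ with $j\in J$.

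For the inductive step, assume that $Y^{(i-1)}=X_{\Sigma^{(i-1)}}$ and that $W_h^{(i-1)}=V(\sigma_h)$ for every $h\ge i$. By \cref{lem:conesurvive}, $\sigma_i\in\Sigma^{(i-1)}$. Since $\sigma_i$ is a face of the smooth cone $\Cone(e_1,\dots,e_d)$, its generators $e_j$, $j\in J_i$, form part of a lattice basis, so $V(\sigma_i)$ is a smooth torus-invariant center. I will invoke the standard fact that blowing up the orbit closure $V(\tau)$ of a smooth cone $\tau$ in a toric variety $X_\Delta$ (with every cone of $\Delta$ containing $\tau$ smooth) is the toric morphism attached to the star subdivision of $\Delta$ at the sum of the primitive generators of $\tau$. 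This is checked chartwise: on $U_\sigma\cong\AA^r\times(\text{torus})$ for $\sigma\supseteq\sigma_i$, the blowup is the coordinate blowup, whose charts are the cones $\Cone(v_i,\text{facets})$. I also need that every cone of $\Sigma^{(i-1)}$ containing $\sigma_i$ is smooth. This holds because all cones of $\Sigma^{(i-1)}$ are smooth, which follows by induction since each star subdivision of a smooth fan at the sum of the generators of a smooth cone yields smooth cones. This gives \cref{item:Yi_toric} and \cref{item:pii_toric}.

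For \cref{item:Wi_toric} at stage $i$, take $h>i$. The strict transform of the torus-invariant subvariety $V(\sigma_h)$ is torus-invariant and irreducible, and it is the closure of the orbit $O(\sigma_h)\setminus V(\sigma_i)$, provided this set is nonempty. Since $\sigma_i\not\subseteq\sigma_h$, we have $V(\sigma_h)\not\subseteq V(\sigma_i)$, so $O(\sigma_h)$ is disjoint from $V(\sigma_i)$ and is untouched by the blowup. Moreover, $\sigma_h$ is still a cone of $\Sigma^{(i)}$ by \cref{lem:conesurvive}. Hence the strict transform is the closure of $O(\sigma_h)$ in $X_{\Sigma^{(i)}}$, namely $V(\sigma_h)$. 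Equivalently, the chart computation in the proof of \cref{lem:inherit} gives the same result. This completes the induction.

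For \cref{item:Fi}, the exceptional divisor of the $i$-th blowup is $D_{v_i}\subseteq X_{\Sigma^{(i)}}$, the divisor of the new ray. Its pullback to $X_{\tilde\Sigma}$ must be computed, and this is the step I expect to need the most care. The pullback of a torus-invariant Cartier divisor under the refinement $\tilde\Sigma\to\Sigma^{(i)}$ is given by its support function. The support function of $D_{v_i}$ is linear on each cone of $\Sigma^{(i)}$, takes the value $1$ at $v_i$, and takes the value $0$ at every other ray of $\Sigma^{(i)}$. The pullback therefore equals $\sum_\rho \phi(u_\rho)D_\rho$ over the rays of $\tilde\Sigma$. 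So I must check that $\phi(v_h)=0$ for every later $v_h$ with $h>i$. The vector $v_h$ lies in $\sigma_h$, which is a cone of $\Sigma^{(i)}$ not containing $v_i$. Indeed, if $v_i\in\sigma_h$, then $J_i\subseteq J_h$, contradicting $J_i\not\subseteq J_h$. Hence $\phi$ vanishes on $\sigma_h$, and the pullback is exactly $D_{v_i}$, which equals $V(\rho_{v_i})$ on $\tilde Y$.
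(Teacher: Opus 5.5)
Your proof is correct and matches the paper's argument for (1)–(3): induction on $i$, the standard identification of the blowup along a smooth orbit closure with the star subdivision, and the observation that $\sigma_i\not\subseteq\sigma_h$ makes $O(\sigma_h)$ disjoint from the center, so the strict transform is the closure of that orbit. For (4) your route differs slightly but equally well: you use the support function of $D_{v_i}$, which vanishes at each later $v_h$ because $v_h\in\sigma_h\in\Sigma^{(i)}$, whereas the paper argues stage by stage that no later center $V(\sigma_h)$ lies in the strict transform of $D_{v_i}$, so pullback equals strict transform; both rest on the same fact that $\rho_{v_i}$ is not a ray of $\sigma_h$.
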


\begin{proof}
	We prove \cref{item:Yi_toric,item:pii_toric,item:Wi_toric} inductively on $i$.
	For $i=0$, we have $Y^{(0)}=X_\Sigma=\AA^d$ and $W_h^{(0)}=V(\sigma_h)$ for every $h$, which gives the initial step.
	Let $i\ge1$ and assume the identifications in \cref{item:Yi_toric,item:Wi_toric} at stage $i-1$.

	The blowup $Y^{(i)}$ of $Y^{(i-1)}=X_{\Sigma^{(i-1)}}$ along the smooth torus-invariant subvariety
	\[
		W_i^{(i-1)}=V(\sigma_i)
	\]
	is the toric morphism associated with the star subdivision along $\sigma_i$, which introduces the ray $\rho_{v_i}$.
		Thus $Y^{(i)}=X_{\Sigma^{(i)}}$, proving \cref{item:Yi_toric,item:pii_toric}.

		Let $h>i$.
	By \cref{lem:conesurvive}, we have $\sigma_h\in\Sigma^{(i-1)}$. Moreover, the assumption $J_i\not\subseteq J_h$ gives $\sigma_i\not\subseteq\sigma_h$.
	Hence the orbit $O(\sigma_h)$ is disjoint from the center $W_i^{(i-1)}=V(\sigma_i)$ of the $i$-th blowup.
	Therefore the $i$-th blowup is an isomorphism over $O(\sigma_h)$.
	Hence the strict transform of $W_h^{(i-1)}=V(\sigma_h)$ is the closure of the same orbit $O(\sigma_h)$ in $X_{\Sigma^{(i)}}$.
	Thus
	\[
		W_h^{(i)}=V(\sigma_h)\quad
		\text{in}\quad Y^{(i)}=X_{\Sigma^{(i)}}.
	\]
	This proves \cref{item:Wi_toric} and completes the induction.

	At the $i$-th step, the exceptional divisor is $D_{v_i}$.
	Since no later blowup center is contained in its strict transform, the pullback of $D_{v_i}$ through all subsequent blowups coincides with its strict transform.
	Therefore $F_i=D_{v_i}$ on $\tilde Y=X_{\tilde\Sigma}$.
\end{proof}

In \cref{set:coordinate}, for each coordinate $j\in\{1,\dotsc,d\}$, set
\[
	I_j\coloneq\sset{i\in\{1,\dotsc,n\}}{j\in J_i},
\]
which records precisely which blowup centers involve the coordinate $x_j$.

The rays of $\tilde\Sigma$ are precisely the coordinate rays $\rho_{e_j}$ and the exceptional rays $\rho_{v_i}$.
Let $D_{e_j}$ and $D_{v_i}=F_i$ denote the corresponding invariant divisors, respectively.
In particular, $D_{e_j}$ is the strict transform of the coordinate hyperplane $V(x_j)\subseteq\AA^d$.

\begin{lem}\label{lem:D_e_j}
	In \cref{set:coordinate}, let $j\in\{1,\dotsc,d\}$.
	Then the following linear equivalence holds on $\tilde Y$:
	\[
		D_{e_j}
		\lineq-\sum_{i\in I_j}F_i.
	\]
\end{lem}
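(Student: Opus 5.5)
The plan is to use the standard toric description of principal divisors: for a character $m\in M$, the divisor of $\chi^m$ on $X_{\tilde\Sigma}$ is $\sum_\rho\innprod{m}{u_\rho}D_\rho$, where $u_\rho$ runs over the primitive generators of the rays of $\tilde\Sigma$. I will take $m=e_j^*$, the $j$-th dual basis vector, so that $\chi^m=x_j$ is the pullback of the coordinate function. By \cref{lem:validstar}, $\tilde Y=X_{\tilde\Sigma}$ and $F_i=D_{v_i}$. By the remark preceding the lemma, the rays of $\tilde\Sigma$ are exactly $\rho_{e_1},\dotsc,\rho_{e_d}$ and $\rho_{v_1},\dotsc,\rho_{v_n}$.

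The key steps, in order, are as follows.

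\begin{enumerate}
	\item Check that $e_k$ and $v_i=\sum_{j\in J_i}e_j$ are primitive lattice vectors. This holds because their coordinates lie in $\{0,1\}$ and are not all zero.
	\item Compute the pairings. One has $\innprod{e_j^*}{e_k}=\delta_{jk}$, and $\innprod{e_j^*}{v_i}=1$ if $j\in J_i$ and $0$ otherwise. The latter is exactly the indicator of $i\in I_j$.
	\item Conclude that
	\[
		\Div(\pi_Y^*x_j)=D_{e_j}+\sum_{i\in I_j}F_i\lineq0.
	\]
	This gives the claimed relation $D_{e_j}\lineq-\sum_{i\in I_j}F_i$.
\end{enumerate}

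As an alternative, non-toric cross-check, one can argue by induction on the blowups. The pullback of $V(x_j)$ under the $i$-th blowup gains the exceptional divisor with multiplicity equal to the multiplicity of $x_j$ along $W_i^{(i-1)}$. That multiplicity is $1$ if $j\in J_i$ and $0$ otherwise, because the center is cut out by the coordinates indexed by $J_i$. The strict transform of $V(x_j)$ then remains $D_{e_j}$. Since the $F_i$ are pullbacks of the exceptional divisors, the total pullback of $V(x_j)$ is $D_{e_j}+\sum_{i\in I_j}F_i$, and it is principal.

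There is no real obstacle. The only point needing care is that $F_i$ is the total pullback and not the strict transform. \Cref{lem:validstar}\cref{item:Fi} already identifies it with the reduced invariant divisor $D_{v_i}$, so the multiplicity of $F_i$ in the principal divisor is just the pairing computed above.
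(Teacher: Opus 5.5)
Your proposal is correct and follows the paper's own proof: it applies the toric principal divisor formula to $\chi^{m_j}$, uses that the rays of $\tilde\Sigma$ are the $\rho_{e_l}$ and $\rho_{v_i}$ with $\innprod{m_j}{v_i}=\mathbf 1_{i\in I_j}$, and uses $F_i=D_{v_i}$. Your primitivity check on $v_i$ and your inductive multiplicity cross-check are correct extras, which the paper leaves implicit or omits.
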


\begin{proof}
	Let $M\coloneq\Hom(N,\ZZ)$, and let $m_1,\dotsc,m_d$ be the basis dual to $e_1,\dotsc,e_d$.
	The toric principal divisor formula gives
	\begin{align*}
		\Div_{\tilde Y}(\chi^{m_j})
		&=\sum_{\rho\in\tilde\Sigma(1)}\innprod{m_j}{u_\rho}D_\rho\\
		&=\sum_{l=1}^d\innprod{m_j}{e_l}D_{e_l}+\sum_{i=1}^n\innprod{m_j}{v_i}D_{v_i}\\
		&=D_{e_j}+\sum_{i\in I_j}F_i.
	\end{align*}
	Since this divisor is principal, the assertion follows.
\end{proof}

\section{Local coordinate models}\label{sec:model}

In this section, we construct local coordinate models for the successive blowup $\pi\colon\tilde X\to X$ using successive blowups along coordinate subspaces of affine space.
We show that, locally near the centers, the original blowup sequence is obtained from these models by \'etale base change.
These models allow us to apply toric geometry to the study of the relative cone of curves and its contractions.

\begin{defi}\label{def:coordmodel}
	In \cref{set:main}, let $p\in\bigcup_i Z_i$ be a closed point.
	Choose coordinates $x_1,\dotsc,x_d$ at $p$ as in \cref{def:snc}, so that
	\[
		\mathcal I_{Z_i,p}=(x_j\mid j\in J_i)
	\]
	for every $Z_i$ containing $p$.
	After extending the $x_j$ to functions on an open neighborhood $U_p$ of $p$ and shrinking $U_p$ if necessary, we may assume that
	\[
		\phi_p\coloneq(x_1,\dotsc,x_d)\colon U_p\longrightarrow\AA^d,
		\qquad \phi_p(p)=0,
	\]
	is \'etale, that
	\[
		Z_i\cap U_p=\phi_p^{-1}(V(x_j\mid j\in J_i))
	\]
	for every $Z_i$ containing $p$, and that $U_p$ meets no $Z_i$ not containing $p$.

	For each $i$, set
	\[
		W_i\coloneq
		\begin{cases}
			V(x_j\mid j\in J_i)\subseteq\AA^d & p\in Z_i,\\
			\emptyset & p\notin Z_i.
		\end{cases}
	\]
	Thus $Z_i\cap U_p=\phi_p^{-1}(W_i)$ for every $i$.
	The $W_i$ define a sequence of blowups
	\[
		Y^{(0)}\coloneq\AA^d,\qquad
		Y^{(i)}\coloneq\Bl_{W_i^{(i-1)}}Y^{(i-1)},\qquad
		\tilde Y_p\coloneq Y^{(n)},
	\]
	where the stages with $W_i=\emptyset$ are trivial.
	Let $F_i$ denote the pullback to $\tilde Y_p$ of the exceptional divisor of the $i$-th blowup, with the convention that $F_i=0$ if $W_i=\emptyset$.

	We call
	\[
		\pi_{Y_p}\colon\tilde Y_p\longrightarrow\AA^d
		\quad\text{together with}\quad
		\phi_p\colon U_p\longrightarrow\AA^d
	\]
	a \emph{coordinate model} of $\tilde X\to X$ at $p$, with \emph{exceptional divisors} $F_1,\dotsc,F_n$.
\end{defi}

Note that in \cref{def:coordmodel}, the nonempty $W_i$ satisfy the assumptions of \cref{set:coordinate}.

The next proposition shows that $\tilde X\to X$ is locally obtained from its coordinate models by \'etale base change.

\begin{prop}\label{prop:localmodel}
	In \cref{set:main}, let $p\in\bigcup_iZ_i$ be a closed point, and let $\pi_{Y_p}\colon\tilde Y_p\to\AA^d$ together with $\phi_p\colon U_p\to\AA^d$ be a coordinate model of $\pi\colon\tilde X\to X$ at $p$, with exceptional divisors $F_1,\dotsc,F_n$.
	Let $\tilde U_p\coloneq\pi^{-1}(U_p)$, the successive blowup of $U_p$ along $Z_1\cap U_p,\dotsc,Z_n\cap U_p$.
	\begin{enumerate}
		\item\label{item:fiberproduct}
			There is a cartesian diagram
			\[
			\begin{tikzcd}
				\tilde U_p\arrow[r,"\tilde\phi_p"]\arrow[d,"\pi"']
					& \tilde Y_p\arrow[d,"\pi_{Y_p}"]\\
				U_p\arrow[r,"\phi_p"']
					& \AA^d
			\end{tikzcd}
			\]
			In particular, $\tilde\phi_p$ induces an isomorphism $\pi^{-1}(p)\isomarrow\pi_{Y_p}^{-1}(0)$.
		\item\label{item:exceptionaldivisors}
			$\restrdiv{E_i}{\tilde U_p}=\tilde\phi_p^{\,*}F_i$ for $1\le i\le n$.
		\item\label{item:NEcompare}
			The diagram in \cref{item:fiberproduct} gives a commutative diagram
			\[
			\begin{tikzcd}
				& N_1(\pi^{-1}(p))\arrow[r,"\sim"]\arrow[d]
					& N_1(\pi_{Y_p}^{-1}(0))\arrow[d]\\
				N_1(\tilde X/X)
					& N_1(\tilde U_p/U_p)\arrow[l,hook']\arrow[r,"(\tilde\phi_p)_*"',"\sim"]
					& N_1(\tilde Y_p/\AA^d)
			\end{tikzcd}
			\]
			in which $(\tilde\phi_p)_*$ is an isomorphism.
		\item\label{item:elementaryequiv}
			Let $C\subseteq\pi^{-1}(p)$ be a curve, and let $C_Y\coloneq\tilde\phi_p(C)\subseteq\pi_{Y_p}^{-1}(0)$.
			Then $C$ is an elementary curve of $\pi\colon\tilde X\to X$ if and only if $C_Y$ is an elementary curve of $\pi_{Y_p}\colon\tilde Y_p\to\AA^d$.
			Here elementary curves on $\tilde Y_p$ are defined with respect to $(F_1,\dotsc,F_n)$.
	\end{enumerate}
\end{prop}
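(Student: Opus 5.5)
The plan is to prove \cref{item:fiberproduct} by induction on the stage $i$, using that blowups commute with flat base change. Set $U^{(0)}\coloneq U_p$ and let $U^{(i)}$ be the preimage of $U_p$ in $X^{(i)}$. We claim that for each $0\le i\le n$ there is a cartesian square $U^{(i)}=U_p\times_{\AA^d}Y^{(i)}$. Under this identification, the strict transforms $Z_h^{(i)}\cap U^{(i)}$ for $h>i$ will be the preimages of $W_h^{(i)}$. For $i=0$ this is exactly the choice of $\phi_p$ in \cref{def:coordmodel}; there we arranged $Z_h\cap U_p=\phi_p^{-1}(W_h)$ for all $h$, including the empty case.

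For the inductive step, note that $U^{(i-1)}\to Y^{(i-1)}$ is étale, being a base change of $\phi_p$, and hence flat. Blowing up commutes with flat base change, so
\[
	\Bl_{Z_i^{(i-1)}}U^{(i-1)}\isom U^{(i-1)}\times_{Y^{(i-1)}}Y^{(i)}.
\]
The exceptional divisor of the left side is the pullback of the exceptional divisor of $Y^{(i)}\to Y^{(i-1)}$. Strict transforms are characterized as closures of preimages away from the exceptional divisor. For the preimage of a strict transform, this is the same as saturating the pulled-back ideal by the ideal of the exceptional divisor. Saturation commutes with flat base change, so the strict transform of $Z_h^{(i-1)}$ is the preimage of $W_h^{(i)}$. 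Composing the squares gives the cartesian diagram. Since $\phi_p^{-1}(0)=\{p\}$ scheme-theoretically after shrinking (étale at $p$ with $k$ algebraically closed), the fiber isomorphism $\pi^{-1}(p)\isom\pi_{Y_p}^{-1}(0)$ follows. The same base-change compatibility of exceptional divisors, traced through the later pullbacks, gives \cref{item:exceptionaldivisors}.

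For \cref{item:NEcompare}, the injectivity of $N_1(\tilde U_p/U_p)\to N_1(\tilde X/X)$ comes from evaluating against the $E_i$. On $\tilde U_p$, intersection numbers with the $\restrdiv{E_i}{\tilde U_p}$ determine classes, by the same argument as for $\tilde X$ applied to the nonempty centers; on $\tilde X$ these numbers are computed by the same divisors. By \cref{item:exceptionaldivisors} and the projection formula, $(\tilde\phi_p)_*$ preserves intersection vectors, since $F_i\cdot\tilde\phi_{p*}C=E_i\cdot C$ when $\tilde\phi_p$ is birational onto its image on $C$. The classes on both sides are identified with the corresponding coordinate vectors in $\RR^n$, where zero coordinates correspond to $W_i=\emptyset$, $E_i|_{\tilde U_p}=0$. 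Surjectivity of $(\tilde\phi_p)_*$ uses that every curve in a fiber $\pi_{Y_p}^{-1}(y)$ lifts isomorphically to a fiber over a preimage point of $y$, when $y$ lies in the image of $\phi_p$; fibers over points outside the image are handled by torus equivariance. Dimensions can also be compared, since the nonempty $E_i$ and $F_i$ form bases. Commutativity of the square is formal.

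\Cref{item:elementaryequiv} then follows. By the fiber isomorphism, $C\to C_Y$ is an isomorphism, so $C_i=F_i\cdot C_Y$ for all $i$ by \cref{item:exceptionaldivisors}. The source and the conditions of \cref{def:elementary} depend only on the intersection vector. The main obstacle is the careful bookkeeping of strict transforms under the étale base change. A secondary issue is whether the restriction map in \cref{item:NEcompare} needs more than intersection vectors. I would handle it by working throughout with the explicit coordinate identification $N_1\isom\RR^{\{i:W_i\neq\emptyset\}}$.
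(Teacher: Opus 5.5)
Your proposal is correct and follows essentially the same route as the paper. You prove the cartesian square by induction using that blowups and strict transforms commute with flat (étale) base change, identify the exceptional divisors through the same base change, obtain the $N_1$ comparison from the bases given by the nonempty $E_i|_{\tilde U_p}$ and $F_i$ (dual to the paper's observation that $\tilde\phi_p^*$ on $N^1$ is an isomorphism), and read off the elementary condition from equal intersection vectors. Two of your steps are unnecessary: you need neither $\phi_p^{-1}(0)=\{p\}$ nor any shrinking of the given $U_p$, since the fiber isomorphism follows by base-changing the cartesian square along $\Spec k(p)\to U_p$ with $k(p)=k=k(0)$; likewise the curve-lifting and torus-equivariance argument for surjectivity is superseded by your dimension count.
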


\begin{proof}
	Let $U^{(i)}$ be the $i$-th blowup of $U_p$, and let $W^{(i)}_h$ ($i<h$) be the strict transform of $W_h$ on $Y^{(i)}$.
	We prove by induction on $i$ that there is a cartesian diagram
	\[
	\begin{tikzcd}
		U^{(i)}\arrow[r,"\phi_i"]\arrow[d]
			& Y^{(i)}\arrow[d]\\
		U_p\arrow[r,"\phi_p"']
			& \AA^d
	\end{tikzcd}
		\]
		and that $Z_h^{(i)}\cap U^{(i)}=\phi_i^{-1}(W_h^{(i)})$ for every $h>i$.

		For $i=0$, the statements are the choice of $U_p$ and $\phi_p$.
	Let $i\ge1$.
	Since blowups commute with flat base change, we obtain a cartesian diagram
	\[
	\begin{tikzcd}
		U^{(i)}\arrow[d]\arrow[r,"\phi_i"]
			& Y^{(i)}\arrow[d]\\
		U^{(i-1)}\arrow[r,"\phi_{i-1}"']
			& Y^{(i-1)}
	\end{tikzcd}
	\]
	from the induction hypothesis.
	Moreover, since strict transforms commute with flat base change, the equality
	\[
		Z_h^{(i)}\cap U^{(i)}=\phi_i^{-1}(W_h^{(i)})\qquad(h>i)
	\]
	follows as well.

	Letting $\tilde\phi_p\coloneq\phi_n$, we now have a cartesian diagram
	\[
	\begin{tikzcd}
		\tilde U_p\arrow[r,"\tilde\phi_p"]\arrow[d]
			& \tilde Y_p\arrow[d,"\pi_{Y_p}"]\\
		U_p\arrow[r,"\phi_p"']
			& \AA^d
	\end{tikzcd}
	\]
	and, in particular, an isomorphism $\pi^{-1}(p)\isomarrow\pi_{Y_p}^{-1}(0)$.
	This proves \cref{item:fiberproduct}.

	For \cref{item:exceptionaldivisors}, first assume $W_i\ne\emptyset$.
	Let $F_i^{(i)}\subseteq Y^{(i)}$ be the exceptional divisor of the $i$-th blowup.
	The base-change diagram at the $i$-th stage identifies the two exceptional divisors, so $\restrdiv{E_i^{(i)}}{U^{(i)}}=\phi_i^*F^{(i)}_i$ on $U^{(i)}$.
	Pulling back to $\tilde U_p$ gives $\restrdiv{E_i}{\tilde U_p}=\tilde\phi_p^*F_i$.
	If $W_i=\emptyset$, then $Z_i\cap U_p=\emptyset$, and hence $\restrdiv{E_i}{\tilde U_p}=0$.
	Thus the desired equality follows from $F_i=0$.

	The diagram of \cref{item:NEcompare} follows from the diagram in \cref{item:fiberproduct}.
	By \cref{item:exceptionaldivisors}, the pullback
	\[
		\tilde\phi_p^*\colon N^1(\tilde Y_p/\AA^d)\longrightarrow N^1(\tilde U_p/U_p)
	\]
		is an isomorphism: it sends the basis given by nonzero $[F_i]$ to the corresponding basis given by $\restrdiv{E_i}{\tilde U_p}$.
		Hence the map $(\tilde\phi_p)_*\colon N_1(\tilde U_p/U_p)\to N_1(\tilde Y_p/\AA^d)$ is an isomorphism.

	Finally, for every $i$, \cref{item:exceptionaldivisors} and the isomorphism $C\isomarrow C_Y$ give $E_i\cdot C=F_i\cdot C_Y$.
	Thus $C$ and $C_Y$ have the same intersection vector.
	By \cref{def:elementary}, $C$ is elementary if and only if $C_Y$ is elementary, proving \cref{item:elementaryequiv}.
\end{proof}

\section{Elementary generators of the relative cone of curves}\label{sec:cone}

This section proves \cref{introthm:cone}, which states that $\NEb(\tilde X/X)$ is generated by finitely many elementary curve classes (\cref{ss:conegeneration}).
We then derive further numerical constraints on classes spanning extremal rays (\cref{ss:numconstraints}).
Finally, we give examples of extremal rays, including a construction realizing every vector allowed by the definition of an elementary curve (\cref{sec:rays}).

\subsection{Generation by elementary curves}\label{ss:conegeneration}

We prove that the relative cone of curves is generated by finitely many elementary curve classes.
We first treat coordinate blowups, and then pass to the general setting using the coordinate models of \cref{sec:model}.
We begin by recalling the relevant facts from toric geometry.

Let $N$ be a lattice of rank $d$, and let $\Sigma$ be a smooth fan in $N_\RR$ whose support $\sigma_0\coloneq\abs{\Sigma}$ is a full-dimensional strongly convex cone.
Since $\Sigma$ subdivides $\sigma_0$, the identity map on $N$ induces a proper toric morphism
\[
	\pi\colon X_\Sigma\longrightarrow X_{\sigma_0}.
\]

A \emph{wall} of $\Sigma$ is a cone $\tau\in\Sigma(d-1)$ contained in exactly two maximal cones, say $\sigma_+$ and $\sigma_-$.
Since $\Sigma$ is smooth, there are rays $\rho_+,\rho_-\notin\tau(1)$ such that $\sigma_\pm=\tau+\rho_\pm$.
Then $\pi$ maps the orbit closure $V(\tau)\subseteq X_\Sigma$ to the torus-fixed point $V(\sigma_0)\subseteq X_{\sigma_0}$, since $\tau$ is not contained in a proper face of $\sigma_0$.
We call this complete curve $V(\tau)$ the \emph{wall curve} associated with $\tau$.

For each ray $\rho\in\Sigma(1)$, let $u_\rho\in N$ be its primitive generator and let $D_\rho\subseteq X_\Sigma$ be the corresponding torus-invariant prime divisor.
Set $b_\rho\coloneq D_\rho\cdot V(\tau)$.
Since $\Sigma$ is smooth,
\[
	b_{\rho_+}=b_{\rho_-}=1,\qquad
	b_\rho=0\quad\text{if }\rho\notin\tau(1)\cup\{\rho_+,\rho_-\}.
\]
Moreover, the class $[V(\tau)]\in N_1(X_\Sigma/X_{\sigma_0})$ is represented by the \emph{wall relation}
\begin{equation}\label{eq:wallrelation}
	u_{\rho_+}+u_{\rho_-}+\sum_{\rho\in\tau(1)}b_\rho u_\rho=0
	\quad\text{in }N.
\end{equation}

A \emph{primitive collection} of $\Sigma$ is a subset $P\subseteq\Sigma(1)$ that is not contained in $\sigma(1)$ for any $\sigma\in\Sigma$, while every proper subset of $P$ is contained in $\sigma(1)$ for some $\sigma\in\Sigma$.
Since $\sigma_0$ is convex, the lattice point
\[
	s_P\coloneq\sum_{\rho\in P}u_\rho
\]
belongs to $\sigma_0$.
Let $\gamma(P)$ be the unique cone of $\Sigma$ whose relative interior contains $s_P$.
Since $\Sigma$ is smooth, there are unique positive integers $c_\rho$ such that
\[
	s_P=\sum_{\rho\in\gamma(P)(1)}c_\rho u_\rho.
\]
Set $c_\rho=0$ for $\rho\notin\gamma(P)(1)$.
The resulting lattice relation defines the \emph{primitive relation} $r(P)\in N_1(X_\Sigma/X_{\sigma_0})$, characterized by
\[
	D_\rho\cdot r(P)
	=\mathbf 1_{\rho\in P}-c_\rho
	=\begin{cases}
		1 & \rho\in P,\;\rho\notin\gamma(P)(1),\\
		1-c_\rho & \rho\in P\cap\gamma(P)(1),\\
		-c_\rho & \rho\in\gamma(P)(1),\;\rho\notin P,\\
		0 & \text{otherwise}.
	\end{cases}
\]

We use the following relative toric cone theorem.

\begin{thm}[Cox--von Renesse {\cite{CvR}}]\label{thm:toriccone}
	Let $\Sigma$ be a smooth fan in $N_\RR$ whose support $\sigma_0$ is a full-dimensional strongly convex cone.
	\begin{enumerate}
		\item\label{item:wallcurves}
			The relative cone of curves is generated by the classes of the wall curves:
			\[
				\NEb(X_\Sigma/X_{\sigma_0})
				=\sum_{\tau\text{ a wall of }\Sigma}\RRp[V(\tau)].
			\]
		\item\label{item:primitive}
			If $X_\Sigma$ is quasi-projective, then the relative cone of curves is generated by the primitive relations:
			\[
				\NEb(X_\Sigma/X_{\sigma_0})
				=\sum_{P\text{ a primitive collection of }\Sigma}\RRp\,r(P).
			\]
	\end{enumerate}
\end{thm}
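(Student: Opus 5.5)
The plan is to prove \cref{item:wallcurves} by torus degeneration, and then to deduce \cref{item:primitive} from \cref{item:wallcurves} together with a convexity argument and Reid's description of toric extremal contractions.

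For \cref{item:wallcurves}, first let $C\subseteq X_\Sigma$ be an irreducible $\pi$-contracted curve. Choose a one-parameter subgroup $\lambda$ with $\lambda\in\operatorname{int}(\sigma_0)\cap N$. For every point $x\in X_{\sigma_0}$, the limit $\lambda(t)\cdot x$ as $t\to0$ is the torus-fixed point $V(\sigma_0)$. Since $\pi$ is proper, the family of cycles $\lambda(t)\cdot C$ has a flat limit $C_0$ in the relative Chow variety (or Hilbert scheme) of $X_\Sigma$ over $X_{\sigma_0}$. This limit is an effective $1$-cycle, numerically equivalent to $C$ over $X_{\sigma_0}$, and supported in $\pi^{-1}(V(\sigma_0))$.

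Next I make the cycle fully torus invariant. Iterate the same limiting process on each component with one-parameter subgroups in general position, or equivalently apply Borel's fixed point theorem to the $T$-orbit closure in the (projective over the base) Chow variety of cycles of fixed class in the fiber. This produces a $T$-invariant effective cycle in $\pi^{-1}(V(\sigma_0))$ in the same class.

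I then identify the $T$-invariant curves that can occur. They are orbit closures $V(\tau)$ with $\tau\in\Sigma(d-1)$. Such a curve is complete only if $\tau$ is a face of two maximal cones, and it maps to $V(\sigma_0)$ exactly when the relative interior of $\tau$ meets the interior of $\sigma_0$. These conditions say precisely that $\tau$ is a wall. Hence $[C]$ is a nonnegative combination of wall classes. Since the wall curves are themselves $\pi$-contracted, the two cones coincide, and the cone is polyhedral, hence closed. The smooth-fan formulas for $b_\rho$ and the wall relation \eqref{eq:wallrelation} are the standard computation and are not needed further here.

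For \cref{item:primitive}, I prove two inclusions. First, each $r(P)$ lies in $\NEb$. Since $X_\Sigma$ is quasi-projective, $\NEb$ is dual to the $\pi$-nef cone. A $\pi$-nef torus-invariant divisor corresponds to a support function $\varphi$ that is linear on cones and convex (for the sign convention where nef means convex). Then
\[
	D\cdot r(P)=\sum_{\rho\in P}\varphi(u_\rho)-\varphi(s_P)\ge0,
\]
by convexity and linearity of $\varphi$ on $\gamma(P)$. Second, every extremal ray $R$ of the polyhedral cone from \cref{item:wallcurves} is spanned by some $r(P)$. Take a wall $\tau$ with $[V(\tau)]\in R$. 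By Reid's toric description of extremal contractions, the rays $\rho$ with $D_\rho\cdot R>0$ are exactly $\rho_\pm$ and those $\rho\in\tau(1)$ with $b_\rho>0$. These form a set $P$ that is not contained in a single cone, while every proper subset is. Thus $P$ is a primitive collection, and the wall relation is exactly its primitive relation $r(P)$. Combining the two inclusions gives \cref{item:primitive}.

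The main obstacle is this last step: checking that the positive part of the wall relation of an extremal wall is a primitive collection, and that $\gamma(P)$ is spanned by the negative part. I would first handle this via Reid's result that all walls in an extremal ray have the same relation and that the contracted fan is obtained by merging the cones. The quasi-projectivity hypothesis enters only here and in the duality with the nef cone.
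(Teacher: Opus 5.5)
The paper gives no argument of its own here. It quotes part~(1) from \cite[Theorem~1.6 and Proposition~1.11]{CvR} and part~(2) from \cite[Theorem~1.4 and Proposition~1.10]{CvR}. Your sketch supplies an actual proof, and the route is sound:
\begin{itemize}
\item for (1), degenerate a contracted curve by one-parameter subgroups to a $T$-invariant effective cycle in $\pi^{-1}(V(\sigma_0))$, whose components must be wall curves;
\item for (2), get $r(P)\in\NEb$ from convexity of nef support functions, and use Reid's structure theorem to show that each extremal wall relation is a primitive relation.
\end{itemize}
Compared with the bare citation, this makes the role of the hypotheses visible. Part (1) uses only properness of $\pi$. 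The inclusion $r(P)\in\NEb$ uses no quasi-projectivity at all: the duality $\NEb=\operatorname{Nef}^\vee$ is formal, and ``nef $\Leftrightarrow$ convex'' follows from (1) together with convexity of $\sigma_0$. So your closing remark overstates where quasi-projectivity enters. Projectivity of $\pi$ is needed only to invoke Reid and to make $\NEb$ strongly convex. You should say the latter explicitly, since without it the cone need not be generated by its extremal rays. The price of your route is dependence on Reid's theorem in the relative projective setting.

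A few details need tightening.
\begin{itemize}
\item \emph{Chow varieties in (1).} Part (1) has no quasi-projectivity hypothesis, so relative Chow varieties and Borel's theorem are not available as stated. Use your iterative alternative instead. Realize each limit as the fiber over $0$ of the closure of $\sset{(\lambda(t)c,t)}{c\in C,\ t\ne0}$ in $X_\Sigma\times\AA^1$. This closure is flat and proper over $\AA^1$ because $\pi$ is proper and $\pi(C)$ is a point. Then iterate over a basis $\lambda_1,\dotsc,\lambda_d$ of $N$ inside the complete central fiber; invariance under earlier $\lambda_i$ persists because the actions commute.
\item \emph{Coefficients in (2).} Identifying the wall relation with $r(P)$ also needs $b_\rho=1$ for $\rho\in P\cap\tau(1)$, which you do not address. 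By Reid, $\Cone\bigl((\tau(1)\cup\{\rho_+,\rho_-\})\setminus\{\rho\}\bigr)$ is a smooth maximal cone of $\Sigma$ for each $\rho\in P$. Expanding $b_\rho u_\rho$ in its $\ZZ$-basis shows that $b_\rho$ divides the coefficient $1$ of $u_{\rho_+}$. The same cones give the proper-subset condition for $P$.
\item \emph{The rest of the ``main obstacle''.} What remains is immediate. Let $P^-=\sset{\rho\in\tau(1)}{b_\rho<0}$. This set spans a face of $\tau$. Once the coefficients on $P$ are $1$, $s_P$ lies in $\relint\Cone(P^-)$, so $\gamma(P)=\Cone(P^-)$. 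Finally, $P$ lies in no cone: otherwise $\sum_{\rho\in P}b_\rho u_\rho\in\relint\Cone(P)\cap\Cone(P^-)$ would force $\Cone(P)\subseteq\Cone(P^-)$. That is impossible, since $P\ne\emptyset$ and $P\cap P^-=\emptyset$.
\end{itemize}
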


\begin{proof}
	Part \cref{item:wallcurves} is the relative toric cone theorem \cite[Theorem~1.6 and Proposition~1.11]{CvR}.
	Part \cref{item:primitive} follows from \cite[Theorem~1.4 and Proposition~1.10]{CvR}.
\end{proof}

To apply this theorem to successive blowups, we compare wall curves before and after a star subdivision.

\begin{lem}\label{lem:wallrec}
	Let $\Sigma$ be a smooth full-dimensional fan in $N_\RR\isom\RR^d$, and let $\sigma\in\Sigma$ be a cone of dimension $r\ge2$ with primitive ray generators $u_1,\dotsc,u_r$.
	Let $\Sigma^*$ be the star subdivision of $\Sigma$ along $\sigma$, set $v\coloneq u_1+\dotsb+u_r$, and let $\rho_v\coloneq\Cone(v)$ be the new ray.
	The induced toric morphism
	\[
		\pi\colon X_{\Sigma^*}\longrightarrow X_\Sigma
	\]
	is the blowup along $V_\Sigma(\sigma)$.
	Let $D_v\subseteq X_{\Sigma^*}$ be its exceptional divisor, and, for each $\rho\in\Sigma(1)$, let $D_\rho\subseteq X_\Sigma$ denote the corresponding invariant prime divisor.

	Let $\tau$ be a wall of $\Sigma^*$ and set $C\coloneq V_{\Sigma^*}(\tau)$.
	Then exactly one of the following occurs.
	\begin{enumerate}
		\item\label{item:curveimage}
			The image $C'\coloneq\pi(C)$ is a wall curve of $X_\Sigma$, and the induced morphism $C\to C'$ is an isomorphism.
			Moreover, at least one of the following holds:
			\begin{itemize}
				\item $D_v\cdot C=0$, or
				\item $D_v\cdot C=D_\rho\cdot C'$ for some ray $\rho\in\sigma(1)$.
			\end{itemize}
		\item\label{item:vertical}
			The morphism $\pi$ contracts $C$ to a point, and $D_v\cdot C=-1$.
	\end{enumerate}
\end{lem}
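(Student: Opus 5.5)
Our approach is to split according to whether $\rho_v\in\tau(1)$, after describing the cones of $\Sigma^*$ explicitly. Recall that the maximal cones of $\Sigma^*$ are of two kinds. Those maximal cones of $\Sigma$ not containing $\sigma$ are kept unchanged. Every maximal cone $\sigma'=\sigma+\eta$ of $\Sigma$ containing $\sigma$, where $\eta$ is spanned by the remaining rays, is replaced by the $r$ cones $\Cone(v,u_1,\dots,\widehat{u_k},\dots,u_r)+\eta$. A wall $\tau$ of $\Sigma^*$ is $(d-1)$-dimensional, and $\pi$ maps $V(\tau)$ onto $V_\Sigma(\tau')$, where $\tau'$ is the smallest cone of $\Sigma$ containing $\tau$.

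First take the case $\rho_v\notin\tau(1)$. Then $\tau$ is already a cone of $\Sigma$ not containing $\sigma$. We claim its two adjacent maximal cones $\tau+\rho_\pm$ in $\Sigma^*$ are the images of cones of $\Sigma$, so $\tau$ is a wall of $\Sigma$ and $C'=V_\Sigma(\tau)$. If a neighbor $\tau+\rho_+$ with $\rho_+=\rho_v$ occurred, then $\tau\supseteq$ all but one $u_k$ and the corresponding $\Sigma$-cone $\tau+\rho_{u_k}$ contains $\sigma$; in that case $C$ is still a wall curve of $\Sigma$ with $\tau$ a wall, namely between $\tau+u_k$ and the other neighbor. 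Since $\pi$ restricted to $V(\tau)$ is an isomorphism onto its image (both are $\PP^1$ and the map is toric of degree one, as the lattice map is the identity), we obtain case \cref{item:curveimage}.

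For the intersection number, we compare wall relations. The wall relation of $\tau$ in $\Sigma$ is $u_{\rho_+'}+u_{\rho_-'}+\sum_{\rho\in\tau(1)} b'_\rho u_\rho=0$. In $\Sigma^*$, one neighbor ray may be $v$ instead of $u_k$. If no neighbor is $v$, then $D_v\cdot C=0$. Otherwise we substitute $u_k=v-\sum_{l\ne k}u_l$ and read off the coefficient of $v$. We expect this to give $D_v\cdot C=D_{u_k}\cdot C'=1$, or more generally one of the $b'_\rho$ with $\rho\in\sigma(1)$. A cleaner route uses $\pi^*D_{\rho}=D_\rho^*+D_v$ for $\rho\in\sigma(1)$ together with the projection formula: $D_\rho\cdot C'=\pi^*D_\rho\cdot C=D^*_\rho\cdot C+D_v\cdot C$. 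It then suffices to find some $\rho\in\sigma(1)$ whose strict transform $D^*_\rho$ has zero intersection with $C$. Suppose instead that every $D^*_{u_k}\cdot C\neq 0$. The $u_k$ must all lie in $\tau(1)\cup\{\rho_\pm\}$. Since the neighbor cones must not contain all of $\sigma$ after subdivision, and $\tau$ has at most one extra neighbor in $\sigma(1)$, we get a contradiction unless $D_v\cdot C=0$. This combinatorial case check, and the bookkeeping showing that some $u_k$ lies outside $\tau(1)\cup\{\rho_\pm\}$ or has $b=0$, is the main obstacle. The key input there is $r\ge2$ and the explicit shape of the star-subdivided cones.

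Next take the case $\rho_v\in\tau(1)$. Then $\tau=\Cone(v)+\Cone(u_l: l\ne k,k')+\eta$ with $\dim\eta=d-r$. Its neighbors are obtained by adding $u_k$ or $u_{k'}$, so the wall relation is $u_k+u_{k'}-v+\sum_{l\ne k,k'}u_l=0$. This shows $D_v\cdot C=-1$. Moreover, the smallest cone of $\Sigma$ containing $\tau$ contains $\sigma+\eta$, which has dimension greater than $d-1$ only when $V_\Sigma(\sigma+\eta)$ is a point. Hence $\pi(C)=V_\Sigma(\sigma+\eta)$ is a point (a fiber line of the blowup), which is case \cref{item:vertical}. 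Exclusivity of the two cases is clear: in case \cref{item:curveimage} the curve maps isomorphically onto a curve, while in case \cref{item:vertical} it is contracted to a point.
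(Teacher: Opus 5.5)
There is a genuine gap in your second case. When $\rho_v\in\tau(1)$, you take $\tau=\Cone(v)+\Cone(u_l : l\ne k,k')+\eta$ with $\dim\eta=d-r$. That is, you only consider facets of a subdivided cone $\Cone(v,u_1,\dots,\widehat{u_k},\dots,u_r)+\eta$ obtained by deleting a second ray of $\sigma$. Deleting a ray of $\eta$ instead also gives a facet containing $v$. Then $\tau$ contains $v$ and every $u_l$ with $l\ne j$ for a single $j$, and the smallest cone $\bar\tau$ of $\Sigma$ containing $\tau$ is $\sigma+\eta'$, with $\dim\bar\tau=d-1$. In this subcase $C$ is \emph{not} contracted, so your claim that $\pi(C)$ is a point whenever $\rho_v\in\tau(1)$ is false.

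For example, let $\Sigma$ have maximal cones $\Cone(e_1,e_2,\pm e_3)$, so $X_\Sigma=\AA^2\times\PP^1$, and let $\sigma=\Cone(e_1,e_2)$. Then $\tau=\Cone(v,e_2)$ is a wall of $\Sigma^*$ containing $\rho_v$, its curve maps isomorphically onto $V_\Sigma(\sigma)\isom\PP^1$, and $D_v\cdot C=0$.

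The missing argument runs as follows:
\begin{itemize}
\item Replacing $\rho_{u_j}$ by $\rho_v$ matches the maximal cones of $\Sigma$ containing $\bar\tau$ with those of $\Sigma^*$ containing $\tau$, so $\bar\tau$ is a wall of $\Sigma$.
\item $\tau$ and $\bar\tau$ span the same subspace, so $C\to C'=V_\Sigma(\bar\tau)$ is birational and hence an isomorphism.
\item Substituting $u_j=v-\sum_{l\ne j}u_l$ into the normalized wall relation of $\bar\tau$ gives that of $\tau$, with the coefficient of $v$ equal to the old coefficient of $u_j$. Hence $D_v\cdot C=D_{\rho_{u_j}}\cdot C'$.
\end{itemize}
This lands in case (1). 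It is the only situation in which a non-contracted wall curve can have $D_v\cdot C<0$, and it is exactly what produces negative later entries of elementary curves. For instance, take the class $(-1,1,-1)$ for the centers $V(x_1,x_2)$, $V(x_1,x_3,x_4)$, $V(x_3,x_4)$ in $\AA^4$. At the third subdivision the wall $\Cone(v_1,v_3,e_4)$ has $\bar\tau=\Cone(v_1,e_3,e_4)$. So the omission cannot be dismissed. You must split the case $\rho_v\in\tau(1)$ according to $\dim\bar\tau\in\{d-1,d\}$; only $\dim\bar\tau=d$ gives case (2).

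By contrast, the ``main obstacle'' you flag in the case $\rho_v\notin\tau(1)$ is not real, and you should discard the contradiction sketch there. If $\rho_v$ is a neighbor of $\tau$ in $\Sigma^*$, it replaces the $\Sigma$-neighbor $\rho_{u_k}$, where $u_k$ is the unique ray of $\sigma$ outside $\tau$. Substituting $u_k=v-\sum_{l\ne k}u_l$ (all these $u_l$ lie in $\tau$) gives the normalized wall relation of $\tau$ in $\Sigma^*$ with coefficient exactly $1$ on $v$. Also $D_{\rho_{u_k}}\cdot C'=1$ because $\rho_{u_k}$ is an outer ray of $\tau$ in $\Sigma$. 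In your pullback route this is just $D^*_{\rho_{u_k}}\cdot C=0$, since $\rho_{u_k}\notin\tau(1)\cup\{\rho_\pm\}$. So in this case only the values $0$ and $1$ occur, not ``one of the $b'_\rho$''.
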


\begin{proof}
	First assume that $\rho_v\notin\tau(1)$.
	Then $\tau\in\Sigma$ and $\sigma\not\subseteq\tau$.
	The star subdivision induces a bijection between the maximal cones of $\Sigma$ containing $\tau$ and those of $\Sigma^*$ containing $\tau$.
	Indeed, such a cone $\mu\in\Sigma$ is unchanged if $\sigma\not\subseteq\mu$, while otherwise the corresponding cone is obtained by replacing the unique ray $\rho\in\mu(1)\setminus\tau(1)$ with $\rho_v$.
	Hence $\tau$ is a wall of $\Sigma$.

	Set $C'\coloneq V_\Sigma(\tau)$.
	Since $\sigma\not\subseteq\tau$, the orbit $O_\Sigma(\tau)$ is disjoint from the center $V_\Sigma(\sigma)$.
	The morphism $\pi$ therefore induces an isomorphism
	\[
		O_{\Sigma^*}(\tau)\isomarrow O_\Sigma(\tau),
	\]
	and $C$ is the strict transform of $C'$.
	Hence the proper morphism $C\to C'$ is birational.
	Since $C'$ is a smooth curve, this morphism is an isomorphism.

	If $C'\cap V_\Sigma(\sigma)=\emptyset$, then $D_v\cdot C=0$.
	Assume instead that $C'\cap V_\Sigma(\sigma)\ne\emptyset$.
	By the orbit--cone correspondence, some cone of $\Sigma$ contains both $\tau$ and $\sigma$.
	Since $\dim\tau=d-1$ and $\sigma\not\subseteq\tau$, this cone is $\mu\coloneq\tau+\sigma$ and has dimension $d$.
	As $\mu$ is smooth and $\tau$ is a facet of $\mu$, there is exactly one ray $\rho\in\mu(1)\setminus\tau(1)$, so $\mu=\tau+\rho$.
	On the smooth affine chart corresponding to $\mu$, the curve $C'$ and the center $V_\Sigma(\sigma)$ are coordinate subvarieties whose scheme-theoretic intersection is the reduced point $V_\Sigma(\mu)$.
	The local description of the blowup and the toric intersection formula therefore give, respectively,
	\[
		D_v\cdot C=1=D_\rho\cdot C',
	\]
	and \cref{item:curveimage} holds in this case.

	Now assume that $\rho_v\in\tau(1)$, and let $\bar\tau$ be the smallest cone of $\Sigma$ containing $\tau$.
	Since $\tau$ has dimension $d-1$, we have $\dim\bar\tau\in\{d-1,d\}$.
	Moreover, $v\in\operatorname{relint}(\sigma)\cap\bar\tau$.
	Because $\sigma\cap\bar\tau$ is a face of $\sigma$ containing a point of $\operatorname{relint}(\sigma)$, it follows that $\sigma\subseteq\bar\tau$.

	Assume first that $\dim\bar\tau=d-1$.
	The definition of the star subdivision then gives
	\[
		\tau(1)
		=\{\rho_v\}\cup(\bar\tau(1)\setminus\{\rho_j\})
	\]
	for a unique ray $\rho_j=\Cone(u_j)\in\sigma(1)$.
	Replacing $\rho_j$ with $\rho_v$ gives a bijection between the maximal cones of $\Sigma$ containing $\bar\tau$ and the maximal cones of $\Sigma^*$ containing $\tau$.
	Thus $\bar\tau$ is a wall of $\Sigma$.

	Set $C'\coloneq V_\Sigma(\bar\tau)$.
	The toric morphism $\pi$ maps $O_{\Sigma^*}(\tau)$ onto $O_\Sigma(\bar\tau)$, since $\bar\tau$ is the smallest cone of $\Sigma$ containing $\tau$.
	Taking closures, we obtain $\pi(C)=C'$.
	The relation
	\[
		u_j
		=v-\sum_{1\le l\le r,\;l\ne j}u_l
	\]
	shows that $\operatorname{span}_\RR(\tau)=\operatorname{span}_\RR(\bar\tau)$.
	The dense tori of $C$ and $C'$ therefore have the same quotient lattice $N/(N\cap\operatorname{span}_\RR(\tau))$, and $\pi$ induces the identity on this lattice.
	Thus $C\to C'$ is birational.
	It is also proper, so it is an isomorphism.

	To compute $D_v\cdot C$, substitute the expression for $u_j$ above into the normalized wall relation of $\bar\tau$.
	The two rays outside the wall are unchanged, and their coefficients are still $1$.
	Hence this gives the normalized wall relation of $\tau$.
	The coefficient of $v$ in this relation is the original coefficient of $u_j$, so the toric intersection formula gives
	\[
		D_v\cdot C=D_{\rho_j}\cdot C'.
	\]
	Thus \cref{item:curveimage} also holds in this case.

	It remains to consider the case $\dim\bar\tau=d$.
	Then $\bar\tau$ is a maximal cone of $\Sigma$ containing $\sigma$.
	The two maximal cones of $\Sigma^*$ containing $\tau$ are obtained from $\bar\tau$ by replacing one of two distinct rays $\rho_{j_1},\rho_{j_2}\in\sigma(1)$ with $\rho_v$.
	Thus
	\[
		\tau(1)
		=\{\rho_v\}\cup(\bar\tau(1)\setminus\{\rho_{j_1},\rho_{j_2}\}).
	\]
	Since $\bar\tau$ is the smallest cone of $\Sigma$ containing $\tau$, the morphism $\pi$ maps $O_{\Sigma^*}(\tau)$ to the torus-fixed point $O_\Sigma(\bar\tau)=V_\Sigma(\bar\tau)$.
	Thus the whole curve $C$ maps to this point.

	The two rays outside $\tau$ are generated by $u_{j_1}$ and $u_{j_2}$, and the normalized wall relation of $\tau$ is
	\[
		u_{j_1}+u_{j_2}-v
		+\sum_{1\le l\le r,\;l\ne j_1,j_2}u_l=0,
	\]
	with coefficient zero on every remaining ray of $\tau$.
	The coefficient of $v$ is therefore $-1$, and the toric intersection formula gives $D_v\cdot C=-1$.
	This proves \cref{item:vertical}.
\end{proof}

We now work in \cref{set:coordinate}.
This is a special case of \cref{set:main}, so the notation and terminology introduced in \cref{ss:tower} apply to the blowup
\[
	\pi_Y\colon\tilde Y\longrightarrow\AA^d.
\]
Thus, for a $\pi_Y$-contracted curve $C\subseteq\tilde Y$, we write $C_i=F_i\cdot C$ and use the terms \emph{source} and \emph{elementary curve} in the sense of \cref{ss:tower}.

We first apply \cref{lem:wallrec} to describe the intersection vectors of wall curves.
We then obtain numerical bounds when the curve class is a positive multiple of a primitive relation.

\begin{lem}\label{lem:wallcurves}
	In \cref{set:coordinate}, let $C\subseteq\tilde Y$ be a wall curve with source $i_0$.
	\begin{enumerate}
		\item\label{item:source}
			$C_{i_0}=-1$.
		\item\label{item:wallentry}
			For every $i>i_0$ with $C_i\ne0$, there exists $j\in J_i$ such that
			\[
				C_i=-\sum_{h<i,\;h\in I_j}C_h.
			\]
	\end{enumerate}
\end{lem}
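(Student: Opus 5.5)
We will prove both assertions by tracking the wall curve $C=V_{\tilde\Sigma}(\tau)$ backwards through the star subdivisions $\tilde\Sigma=\Sigma^{(n)}\to\Sigma^{(n-1)}\to\dotsb\to\Sigma^{(0)}$, applying \cref{lem:wallrec} at each step. For $i\ge i_0$, let $C^{(i)}$ be the image of $C$ in $Y^{(i)}$.

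\textbf{Finding the source step.} Consider the $i$-th step $\Sigma^{(i)}\to\Sigma^{(i-1)}$ and a wall curve of $\Sigma^{(i)}$. By \cref{lem:wallrec}, either its image is again a wall curve of $\Sigma^{(i-1)}$ and the map is an isomorphism, or it is contracted and has $D_{v_i}$-degree $-1$. Starting from $C$ and descending, we therefore obtain a chain of wall curves $C^{(i)}$ for all $i\ge i_0'$, where $i_0'$ is the first step (going down) at which the curve is contracted. Each intersection number $F_i\cdot C$ equals $D_{v_i}\cdot C^{(i)}$ computed on $X_{\Sigma^{(i)}}$, by the projection formula, because $F_i$ is the pullback of $D_{v_i}$. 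For $i<i_0'$ the curve maps to a point in $Y^{(i)}$, so $C_i=0$. At $i=i_0'$, case \cref{item:vertical} gives $C_{i_0'}=-1$. Hence $i_0'=i_0$ is the source and $C_{i_0}=-1$, which proves \cref{item:source}.

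\textbf{Case analysis for $i>i_0$.} Fix $i>i_0$ with $C_i\ne0$. Then step $i$ falls under case \cref{item:curveimage} of \cref{lem:wallrec}, applied to $C^{(i)}\to C^{(i-1)}$. Since $C_i\ne0$, the second alternative holds: $C_i=D_{\rho}\cdot C^{(i-1)}$ for some ray $\rho\in\sigma_i(1)=\{\rho_{e_j}:j\in J_i\}$. Here we use \cref{lem:conesurvive}, which guarantees $\sigma_i\in\Sigma^{(i-1)}$. So $C_i=D_{e_j}^{(i-1)}\cdot C^{(i-1)}$ for some $j\in J_i$, where $D_{e_j}^{(i-1)}$ denotes the strict transform of $V(x_j)$ on $Y^{(i-1)}$.

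\textbf{Computing $D_{e_j}^{(i-1)}\cdot C^{(i-1)}$.} Apply \cref{lem:D_e_j} to the truncated sequence $Y^{(i-1)}$, i.e.\ with centers $W_1,\dotsc,W_{i-1}$. This gives $D_{e_j}^{(i-1)}\sim-\sum_{h<i,\,h\in I_j}F_h^{(i-1)}$ on $Y^{(i-1)}$. The curve $C^{(i-1)}$ is contracted to $0$, so linear equivalence may be used directly. Pulling back to $\tilde Y$ and using the projection formula with $C\to C^{(i-1)}$ an isomorphism (from case \cref{item:curveimage} at every step between $i_0$ and $i-1$), we get $F_h^{(i-1)}\cdot C^{(i-1)}=C_h$. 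This yields $C_i=-\sum_{h<i,\;h\in I_j}C_h$, which is \cref{item:wallentry}.

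\textbf{Main obstacle.} The delicate point is the bookkeeping of pullback versus strict transform. We must confirm that $F_h$ on $\tilde Y$ is the pullback of $F_h^{(i-1)}$, which holds by definition. We must also confirm that \cref{lem:D_e_j} applies verbatim at an intermediate stage, since its proof only uses the toric principal divisor formula on $\Sigma^{(i-1)}$. The remaining check is that the degree-one isomorphisms $C^{(i)}\to C^{(i-1)}$ make all intersection numbers computable on any intermediate model.
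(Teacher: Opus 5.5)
Your proposal is correct and follows essentially the same route as the paper. For part (1), the paper inducts on $n$, while you descend through \cref{lem:wallrec} to the first step at which the curve is contracted; this is the same argument unrolled. For part (2), both use the second alternative of \cref{lem:wallrec}~\cref{item:curveimage} together with \cref{lem:D_e_j} applied to the truncated sequence $Y^{(i-1)}\to\AA^d$, with the projection formula along the isomorphisms $C\to C^{(i)}$.
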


\begin{proof}
	For $0\le i\le n$, let $C^{(i)}\subseteq Y^{(i)}$ be the image of $C$.
	For $1\le h\le i\le n$, let $F_h^{(i)}$ denote the pullback to $Y^{(i)}$ of the exceptional divisor of the $h$-th blowup, and, when $i\ge i_0$, set $C_h^{(i)}\coloneq F_h^{(i)}\cdot C^{(i)}$.

	We first prove \cref{item:source} by induction on $n$.
	The fan $\tilde\Sigma=\Sigma^{(n)}$ is obtained from $\Sigma^{(n-1)}$ by the star subdivision along $\sigma_n$.
	If $i_0<n$, then $C$ is not contracted by $\tilde Y\to Y^{(n-1)}$.
	Hence, by \cref{lem:wallrec}, $C^{(n-1)}$ is a wall curve on $Y^{(n-1)}$, and the induced morphism $C\to C^{(n-1)}$ is an isomorphism.
	Its source is again $i_0$, and the induction hypothesis gives $C_{i_0}^{(n-1)}=-1$.
	The projection formula then yields
	\[
		C_{i_0}=C_{i_0}^{(n-1)}=-1.
	\]
	If $i_0=n$, then $C$ is contracted by $\tilde Y\to Y^{(n-1)}$, so
	\[
		C_n=D_{v_n}\cdot C=-1
	\]
	by \cref{lem:wallrec}~\cref{item:vertical}.
	This also establishes the initial case $n=1$.

	We now prove \cref{item:wallentry}.
	Fix $i>i_0$ with $C_i\ne0$.
	Repeated application of \cref{lem:wallrec} shows that
	$C^{(i)}$ and $C^{(i-1)}$ are wall curves on $Y^{(i)}$ and
	$Y^{(i-1)}$, respectively, and that the induced morphisms $C\to C^{(i)}\to C^{(i-1)}$ are isomorphisms.
	The projection formula gives $C_h^{(i-1)}=C_h$ for every $h<i$.
	Apply \cref{lem:wallrec} to the morphism $Y^{(i)}\to Y^{(i-1)}$.
	Since $C_i\ne0$, there exists a ray $\rho\in\sigma_i(1)$ such that
	\[
		C_i
		=F_i^{(i)}\cdot C^{(i)}
		=D_\rho\cdot C^{(i-1)}.
	\]
	As $\sigma_i(1)=\sset{\rho_{e_j}}{j\in J_i}$, write $\rho=\rho_{e_j}$ for some $j\in J_i$.
	Applying \cref{lem:D_e_j} to the truncated sequence of blowups $Y^{(i-1)}\to\AA^d$, we obtain
	\[
		C_i=D_{e_j}\cdot C^{(i-1)}
		=-\sum_{h<i,\;h\in I_j}C_h.
	\]
	This proves \cref{item:wallentry}.
\end{proof}

\begin{lem}\label{lem:vennlocal}
	In \cref{set:coordinate}, let $C$ be a wall curve on $\tilde Y$, and let $P$ be a primitive collection such that $r(P)\in\RRp[C]$.
	Then
	\begin{enumerate}
		\item\label{item:vlclass} $[C]=r(P)$,
		\item\label{item:vlmem}
			for every ray $\rho\in\tilde\Sigma(1)$, the inequality $D_\rho\cdot C\le1$ holds, and if $D_\rho\cdot C=1$, then $\rho\in P$, and
		\item\label{item:vlnorm} in particular, the following inequalities hold:
			\[
				C_i\le1\quad(1\le i\le n),\qquad
				\sum_{i\in I_j}C_i\ge-1\quad(1\le j\le d).
			\]
	\end{enumerate}
\end{lem}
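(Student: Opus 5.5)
The plan is to compare the two lattice relations that represent $[C]$ and $r(P)$, and use integrality together with the coefficient $1$ on the two rays outside the wall. Write $\tau$ for the wall with $C=V(\tau)$, and $\rho_\pm$ for the two rays with $\sigma_\pm=\tau+\rho_\pm$. By \eqref{eq:wallrelation}, $D_{\rho_\pm}\cdot C=1$, and every other $D_\rho\cdot C$ is $0$ unless $\rho\in\tau(1)$.

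Suppose $r(P)=\lambda[C]$ with $\lambda\ge0$. Since $\tilde Y$ is quasi-projective over $\AA^d$, the classes $[D_\rho]$ span $N^1(\tilde Y/\AA^d)$, so a class is determined by its intersection numbers with the $D_\rho$. The number $D_\rho\cdot r(P)=\mathbf 1_{\rho\in P}-c_\rho$ equals $1$ exactly for $\rho\in P\setminus\gamma(P)(1)$. The set $P\setminus\gamma(P)(1)$ is nonempty: otherwise $P\subseteq\gamma(P)(1)$, which contradicts the assumption that $P$ is a primitive collection. Hence $r(P)\neq0$ and $\lambda>0$.

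Pair with $D_{\rho_+}$ to get $\lambda=D_{\rho_+}\cdot r(P)$. This is an integer, and it is at most $1$, since all the values $\mathbf 1_{\rho\in P}-c_\rho$ are at most $1$. So $\lambda=1$, which proves \cref{item:vlclass}.

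For \cref{item:vlmem}, the first claim now gives $D_\rho\cdot C=\mathbf 1_{\rho\in P}-c_\rho\le1$. Equality forces $\rho\in P$ (and $\rho\notin\gamma(P)(1)$).

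For \cref{item:vlnorm}, apply \cref{item:vlmem} to the rays $\rho_{v_i}$, using \cref{lem:validstar}, which gives $D_{v_i}=F_i$; this yields $C_i\le1$. Next apply \cref{item:vlmem} to $\rho_{e_j}$ and use \cref{lem:D_e_j}, which gives $D_{e_j}\cdot C=-\sum_{i\in I_j}C_i$. This yields $-\sum_{i\in I_j}C_i\le1$, which is the stated inequality.

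The main point to be careful about is the integrality and upper bound on $\lambda$. This relies on $\rho_+$ being a ray on which the wall relation has coefficient exactly $1$, and on the primitive relation having all $D_\rho$-intersections at most $1$ with integer values. Both are recorded above, so no obstacle is expected beyond confirming that $N_1$ classes are detected by the $D_\rho$. That holds because $\tilde Y$ is a smooth quasi-projective toric variety, so the invariant divisors generate the Picard group.
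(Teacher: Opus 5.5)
Your proof is correct and follows the paper's argument. You pair $r(P)=\lambda[C]$ with $D_{\rho_+}$ to get $\lambda=\mathbf 1_{\rho_+\in P}-c_{\rho_+}\le 1$, use integrality to force $\lambda=1$, and then read off the bounds via $F_i=D_{v_i}$ and \cref{lem:D_e_j}. You also justify $\lambda>0$: $P\not\subseteq\gamma(P)(1)$ by primitivity, so $r(P)\ne0$. The paper simply asserts this step, so your version fills it in.
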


\begin{proof}
	Write $r(P)=\lambda[C]$ for some $\lambda>0$.
	Let $\tau$ be the wall such that $C=V(\tau)$, and let $\rho_+$ be one of the two rays adjacent to $\tau$ but not contained in it.
	Then $D_{\rho_+}\cdot C=1$.
	Consequently,
	\[
		0<\lambda
		=\lambda(D_{\rho_+}\cdot C)
		=D_{\rho_+}\cdot r(P)
		=\mathbf 1_{\rho_+\in P}-c_{\rho_+}
		\le1.
	\]
	Since $c_{\rho_+}$ is an integer, this gives $\lambda=1$ and proves \cref{item:vlclass}.

	It follows that, for every ray $\rho\in\tilde\Sigma(1)$,
	\[
		D_\rho\cdot C
		=\mathbf 1_{\rho\in P}-c_\rho
		\le1.
	\]
	Taking $\rho=\rho_{v_i}$ gives $C_i=D_{v_i}\cdot C\le1$.
	Similarly, taking $\rho=\rho_{e_j}$ and applying \cref{lem:D_e_j}, we obtain $-\sum_{i\in I_j}C_i=D_{e_j}\cdot C\le1$.
	This proves \cref{item:vlnorm}.

	Finally, assume that $D_\rho\cdot C=1$.
	Then
	\[
		1=\mathbf 1_{\rho\in P}-c_\rho
		\le\mathbf 1_{\rho\in P}
		\le1.
	\]
	Hence $\mathbf 1_{\rho\in P}=1$ and $c_\rho=0$, so $\rho\in P$. This proves \cref{item:vlmem}.
\end{proof}

We now prove the coordinate-blowup case of \cref{introthm:cone} by applying \cref{thm:toriccone} and the preceding lemmas to each extremal ray.

\begin{prop}\label{prop:coordinate}
	In \cref{set:coordinate}, the relative cone of curves $\NEb(\tilde Y/\AA^d)$ is generated by the classes of finitely many elementary curves, all contained in the fiber $\pi_Y^{-1}(0)$ over the origin.
\end{prop}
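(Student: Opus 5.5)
The plan is to apply the relative toric cone theorem, \cref{thm:toriccone}, to the smooth fan $\tilde\Sigma$. Its support is the full-dimensional strongly convex cone $\Cone(e_1,\dotsc,e_d)$, and $X_{\tilde\Sigma}=\tilde Y$ by \cref{lem:validstar}. Since $\tilde Y$ is a successive blowup of $\AA^d$, the morphism $\pi_Y$ is projective, so $\tilde Y$ is quasi-projective and both parts of \cref{thm:toriccone} apply.

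First I establish finiteness and the fiber condition. By \cref{thm:toriccone}~\cref{item:wallcurves}, $\NEb(\tilde Y/\AA^d)$ is generated by the classes of the wall curves. There are only finitely many walls, so the cone is rational polyhedral. Every wall curve maps to the torus-fixed point $V(\sigma_0)=\{0\}$, as noted before \cref{thm:toriccone}, so every wall curve lies in $\pi_Y^{-1}(0)$. Since $\pi_Y$ is projective, a $\pi_Y$-ample divisor is strictly positive on $\NEb(\tilde Y/\AA^d)\setminus\{0\}$, so the cone is pointed. A pointed polyhedral cone is generated by its finitely many extremal rays. It therefore suffices to show that each extremal ray $R$ is spanned by an elementary wall curve.

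Next I find a wall curve on each extremal ray that also carries a primitive relation. Since $R$ is extremal and the cone is generated by wall curve classes, $R=\RRp[C]$ for some wall curve $C$. By \cref{thm:toriccone}~\cref{item:primitive}, the cone is also generated by the classes $r(P)$, so extremality forces some $r(P)$ to lie in $R$. This $r(P)$ is nonzero: a zero class would contradict $D_{\rho_+}\cdot r(P)$ being a positive multiple of $D_{\rho_+}\cdot C=1$, and in any case we may discard zero generators. Hence $r(P)\in\RRp[C]$, and \cref{lem:vennlocal} applies to $C$. It gives $C_i\le1$ for all $i$.

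Finally I verify that $C$ is elementary. Let $i_0$ be its source. Then $C_{i_0}=-1$ by \cref{lem:wallcurves}~\cref{item:source}. Take $i>i_0$. The upper bound $C_i\le1$ holds by the previous step. For the lower bound, if $C_i=0$ there is nothing to prove. Otherwise \cref{lem:wallcurves}~\cref{item:wallentry} gives some $j\in J_i$ with
\[
	C_i=-\sum_{h<i,\;h\in I_j}C_h\;\ge\;-\sum_{h<i,\;C_h>0}C_h\;=\;-\bigcard{\sset{h<i}{C_h=1}}.
\]
The last equality holds because $C_h\le1$ makes every positive entry equal to $1$. So $C$ is an elementary curve in $\pi_Y^{-1}(0)$ spanning $R$, and the proposition follows.

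The main obstacle is the middle step: making sure a single extremal ray carries both a wall curve and a primitive relation, so that \cref{lem:vennlocal} (which needs $r(P)\in\RRp[C]$) becomes available. This relies on pointedness and polyhedrality of the cone. Everything after that is a direct combination of the two lemmas.
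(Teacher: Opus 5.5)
Your proposal is correct and follows essentially the same route as the paper. Both apply the two parts of \cref{thm:toriccone} to pick, on each extremal ray, a wall curve $C$ together with a primitive relation $r(P)$. Both then combine \cref{lem:wallcurves} and \cref{lem:vennlocal} to get $C_{i_0}=-1$, $C_i\le1$, and the lower bound from $C_i=-\sum_{h<i,\,h\in I_j}C_h$.

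The only cosmetic difference is how you place the wall curves in $\pi_Y^{-1}(0)$. You use the remark preceding \cref{thm:toriccone}, whereas the paper uses torus-equivariance of $\pi_Y$.
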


\begin{proof}
	By \cref{lem:validstar}, we have $\tilde Y=X_{\tilde\Sigma}$ and $F_i=D_{v_i}$.
	The fan $\tilde\Sigma$ is smooth because $\tilde Y$ is smooth by \cref{lem:inherit}.
	Moreover, $\abs{\tilde\Sigma}=\Cone(e_1,\dotsc,e_d)$, which is a full-dimensional strongly convex cone, since star subdivisions do not change the support.
	Finally, $\pi_Y\colon\tilde Y\to\AA^d$ is projective, being a composite of blowups.
	In particular, $\tilde Y$ is quasi-projective.

	We may therefore apply \cref{thm:toriccone}.
	The first part of the theorem shows that $\NEb(\tilde Y/\AA^d)$ is generated by finitely many wall classes.
	Moreover, since $\pi_Y$ is projective, this cone is strongly convex.
	It is therefore generated by its finitely many extremal rays.

	Let $R\subseteq\NEb(\tilde Y/\AA^d)$ be an extremal ray.
	Since the cone is generated both by wall classes and by primitive relations, choose a wall curve $C$ and a primitive collection $P$ of $\tilde\Sigma$ such that $[C]\in R$ and $r(P)\in R$.

	Because $C$ is torus-invariant and $\pi_Y$ is torus-equivariant, the point $\pi_Y(C)$ is fixed by the torus and hence is the origin.
	Thus $C\subseteq\pi_Y^{-1}(0)$.

	It remains to show that $C$ is elementary.
	Let $i_0$ be its source.
	By \cref{lem:wallcurves,lem:vennlocal}, we have $C_{i_0}=-1$ and $C_i\le1$ for every $i$.
	Fix $i>i_0$.
	If $C_i\ge0$, then the lower bound in \cref{def:elementary} is immediate.
	If $C_i<0$, then \cref{lem:wallcurves}~\cref{item:wallentry} gives a coordinate $j\in J_i$ such that
	\[
		C_i=-\sum_{h<i,\;h\in I_j}C_h
		\ge-\sum_{h<i,\;C_h>0}C_h
		=-\bigl\lvert\sset{h<i}{C_h=1}\bigr\rvert.
	\]
	Hence $C$ satisfies all the defining conditions of an elementary curve.

	We have therefore found on each extremal ray the class of an elementary curve contained in $\pi_Y^{-1}(0)$.
	Hence the result follows.
\end{proof}

We now return to \cref{set:main} and use the coordinate models of \cref{sec:model} to prove \cref{introthm:cone}.

\begin{lem}\label{lem:NEinj}
	In \cref{set:main}, let $p\in\bigcup_iZ_i$ be a closed point, and let $\pi_{Y_p}\colon\tilde Y_p\to\AA^d$ be a coordinate model of $\pi\colon\tilde X\to X$ at $p$.
	Then the injective linear map
	\[
	\begin{tikzcd}
		N_1(\tilde Y_p/\AA^d)\arrow[r,hook]
			& N_1(\tilde X/X)
	\end{tikzcd}
	\]
	induced by \cref{prop:localmodel}~\cref{item:NEcompare} restricts to an injection of cones, denoted by $\iota_p$, which fits into the following commutative diagram:
	\[
	\begin{tikzcd}
		\NEb(\pi^{-1}(p))\arrow[r,"\sim"]\arrow[d]
			& \NEb(\pi_{Y_p}^{-1}(0))\arrow[d,two heads]\\
		\NEb(\tilde X/X)
			& \NEb(\tilde Y_p/\AA^d)\arrow[l,hook',"\iota_p"]
	\end{tikzcd}
	\]
	The upper horizontal arrow is induced by the fiber isomorphism $\pi^{-1}(p)\isomarrow\pi_{Y_p}^{-1}(0)$, and the vertical arrows are the pushforwards of the inclusions of the fibers.
\end{lem}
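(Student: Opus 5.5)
The plan is to prove the three claims in turn: that the composite linear map is injective, that it sends $\NEb(\tilde Y_p/\AA^d)$ into $\NEb(\tilde X/X)$ injectively, and that the square commutes, with the right vertical arrow surjective.

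\emph{Injectivity.} By \cref{prop:localmodel}~\cref{item:NEcompare}, the map is the composite of $(\tilde\phi_p)_*^{-1}\colon N_1(\tilde Y_p/\AA^d)\isomarrow N_1(\tilde U_p/U_p)$ with the pushforward $N_1(\tilde U_p/U_p)\to N_1(\tilde X/X)$. In intersection-vector coordinates this composite is the inclusion of $\RR^{\{i: W_i\neq\emptyset\}}$ into $\RR^n$ by extension by zero. Indeed, \cref{prop:localmodel}~\cref{item:exceptionaldivisors} gives $E_i\cdot C=F_i\cdot\tilde\phi_p(C)$ for curves in $\tilde U_p$, and both sides vanish when $W_i=\emptyset$. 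Hence the composite is injective.

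\emph{Image of the cone.} By \cref{prop:coordinate}, $\NEb(\tilde Y_p/\AA^d)$ is generated by finitely many classes of curves $C_Y\subseteq\pi_{Y_p}^{-1}(0)$. Via the fiber isomorphism of \cref{prop:localmodel}~\cref{item:fiberproduct}, each $C_Y$ equals $\tilde\phi_p(C)$ for a curve $C\subseteq\pi^{-1}(p)$ with the same intersection vector. The class $[C]$ lies in $\NEb(\tilde X/X)$, so the linear map sends each generator into $\NEb(\tilde X/X)$. Since the cone is the nonnegative span of these generators, the whole cone lands in $\NEb(\tilde X/X)$, and we define $\iota_p$ as this restriction; it is injective because the linear map is.

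\emph{Commutativity and surjectivity.} Commutativity of the square holds on classes of curves in $\pi^{-1}(p)$, because intersection vectors are preserved, and it extends to $\NEb$ of the fibers by linearity and continuity. The right vertical arrow is surjective because the generators above lie in $\NEb(\pi_{Y_p}^{-1}(0))$, and its image is a closed cone, since $\NEb$ of the complete fiber maps onto a cone containing finitely many generators of $\NEb(\tilde Y_p/\AA^d)$.

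The only subtle point is that closures need not pass cleanly through linear maps. The argument above avoids this by using the finite polyhedral generation from \cref{prop:coordinate}, so no closure argument is needed on the $\tilde Y_p$ side.
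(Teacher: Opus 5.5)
Your proof is correct and follows essentially the same route as the paper. Both arguments rest on \cref{prop:coordinate}, applied after omitting the empty stages, which gives finitely many generators of $\NEb(\tilde Y_p/\AA^d)$ in the central fiber, transported via the fiber isomorphism with matching intersection vectors. The only difference is that the paper passes through $\NEb(\tilde U_p/U_p)$ and deduces that $(\tilde\phi_p)_*$ restricts to an isomorphism of cones, whereas you check directly that the generators land in $\NEb(\tilde X/X)$.
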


\begin{proof}
	The diagram in \cref{prop:localmodel}~\cref{item:NEcompare} restricts to the following commutative diagram of cones:
	\[
	\begin{tikzcd}
		\NEb(\pi^{-1}(p))\arrow[r,equal]\arrow[d]
			& \NEb(\pi^{-1}(p))\arrow[r,"\sim"]\arrow[d]
			& \NEb(\pi_{Y_p}^{-1}(0))\arrow[d]\\
		\NEb(\tilde X/X)
			& \NEb(\tilde U_p/U_p)\arrow[r,hook,"(\tilde\phi_p)_*"']\arrow[l,hook']
			& \NEb(\tilde Y_p/\AA^d)
	\end{tikzcd}
	\]
	After omitting the empty stages, \cref{prop:coordinate} shows that $\NEb(\tilde Y_p/\AA^d)$ is generated by classes of curves contained in $\pi_{Y_p}^{-1}(0)$.
	Hence the map $\NEb(\pi_{Y_p}^{-1}(0))\to\NEb(\tilde Y_p/\AA^d)$ is surjective.
	The commutative diagram then shows that the injection
	\[
		(\tilde\phi_p)_*\colon\NEb(\tilde U_p/U_p)\longrightarrow\NEb(\tilde Y_p/\AA^d)
	\]
	is also surjective, and hence an isomorphism.
	Thus the composite map
	\[
	\begin{tikzcd}[column sep=large]
		\iota_p\colon\NEb(\tilde Y_p/\AA^d)\arrow[r,"(\tilde\phi_p)_*^{-1}","\sim"']
			& \NEb(\tilde U_p/U_p)\arrow[r,hook]
			& \NEb(\tilde X/X)
	\end{tikzcd}
	\]
	is the required injection.
\end{proof}

\begin{thm}\label{thm:cone}
	In \cref{set:main}, the relative cone of curves $\NEb(\tilde X/X)$ is generated by the classes of finitely many elementary curves.
\end{thm}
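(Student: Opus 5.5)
The plan is to reduce the global statement to the coordinate models through \cref{lem:NEinj} and \cref{prop:coordinate}. Every $\pi$-contracted curve $C\subseteq\tilde X$ lies in a fiber $\pi^{-1}(p)$ over a closed point $p\in\bigcup_iZ_i$, since $\pi$ is an isomorphism over the complement. Hence $\NEb(\tilde X/X)$ is the closure of the convex cone spanned by the images of the cones $\NEb(\pi^{-1}(p))$ over all such $p$.

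For each $p$, choose a coordinate model $\pi_{Y_p}\colon\tilde Y_p\to\AA^d$ together with $\phi_p$. By \cref{lem:NEinj}, the image of $\NEb(\pi^{-1}(p))$ in $N_1(\tilde X/X)$ equals $\iota_p(\NEb(\tilde Y_p/\AA^d))$. By \cref{prop:coordinate}, applied after discarding the empty $W_i$, this cone is generated by classes of finitely many elementary curves $C_Y\subseteq\pi_{Y_p}^{-1}(0)$. Transporting such a curve through the fiber isomorphism $\pi^{-1}(p)\isom\pi_{Y_p}^{-1}(0)$ gives a curve $C\subseteq\pi^{-1}(p)$ with the same intersection vector. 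This uses \cref{prop:localmodel}\,\cref{item:exceptionaldivisors}, with the convention $F_i=0$ for the omitted indices. By \cref{prop:localmodel}\,\cref{item:elementaryequiv}, $C$ is elementary. One subtlety is that \cref{prop:coordinate} defines elementarity using only the nonempty stages. Inserting zero entries does not change the source, the value $-1$ there, the upper bounds, or the counts $\card{\sset{h<i}{C_h=1}}$, so this causes no problem.

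Thus $\NEb(\tilde X/X)$ is the closure of the cone generated by the set $S$ of classes of elementary curves of $\tilde X$. By \cref{def:elementary}, each such class is an integer vector with entries in $[-n,1]$, so $S$ is finite. The cone generated by a finite set is closed, so no closure is needed, and $\NEb(\tilde X/X)=\sum_{v\in S}\RRp v$. The final assertion of \cref{introthm:cone} follows: every extremal ray of a cone generated by finitely many vectors contains one of the generators.

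The main obstacle is already absorbed by \cref{lem:NEinj}: it identifies each fiber cone with the full coordinate cone, so the local models contribute generators and nothing more. The remaining care is the bookkeeping between $N_1(\tilde U_p/U_p)$ and $N_1(\tilde X/X)$. Both are identified with $\RR^n$ via intersection with $E_1,\dotsc,E_n$ (respectively their restrictions), and these identifications are compatible with the inclusion. The finiteness argument then goes through without needing finitely many points $p$, because the set of possible intersection vectors is bounded a priori.
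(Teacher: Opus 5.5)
Your proposal is correct and follows essentially the same route as the paper. The paper lets $K$ be the cone spanned by all elementary classes, which is finitely generated and hence closed because the possible intersection vectors are bounded. It then uses \cref{lem:NEinj} and \cref{prop:coordinate} to show that the image of each fiber cone $\NEb(\pi^{-1}(p))$ lies in $K$, and concludes $\NEb(\tilde X/X)=K$ by closedness; your explicit treatment of the omitted empty stages and of the entry bound is the same bookkeeping, spelled out in more detail.
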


\begin{proof}
	Let $K\subseteq N_1(\tilde X/X)$ be the cone generated by the classes of all elementary curves.
	By definition, elementary curves have only finitely many possible integral intersection vectors, each of which determines a numerical class in $N_1(\tilde X/X)$.
	Hence $K$ is finitely generated and therefore closed.

	For each closed point $p\in\bigcup_iZ_i$, choose a coordinate model $\pi_{Y_p}\colon\tilde Y_p\to\AA^d$.
	Consider the diagram of \cref{lem:NEinj}:
	\[
	\begin{tikzcd}
		\NEb(\pi^{-1}(p))\arrow[r,"\sim"]\arrow[d]
			& \NEb(\pi_{Y_p}^{-1}(0))\arrow[d,two heads]\\
		\NEb(\tilde X/X)
			& \NEb(\tilde Y_p/\AA^d)\arrow[l,hook',"\iota_p"]
	\end{tikzcd}
	\]
	After omitting the empty stages, \cref{prop:coordinate} shows that the cone $\NEb(\tilde Y_p/\AA^d)$ is generated by the classes of elementary curves contained in $\pi_{Y_p}^{-1}(0)$.
	It follows that the image of $\iota_p$ is contained in $K$.
	By commutativity, the image of $\NEb(\pi^{-1}(p))$ in $\NEb(\tilde X/X)$ is also contained in $K$.

	Every curve contracted by $\pi$ is contained in $\pi^{-1}(p)$ for some closed point $p\in\bigcup_iZ_i$.
	Thus every effective relative curve class belongs to $K$.
	Since $K$ is closed, it follows that $\NEb(\tilde X/X)\subseteq K$, and hence $\NEb(\tilde X/X)=K$.
\end{proof}

\subsection{Numerical constraints on extremal rays}\label{ss:numconstraints}

The elementary curves used in \cref{ss:conegeneration} are defined by conditions on their intersection vectors alone, but which such vectors occur on extremal rays depends on how the centers meet.
We use this intersection data, encoded by the coordinate sets $J_i$, to derive further necessary conditions.

In \cref{set:main}, for a $\pi$-contracted curve $C$, we define its positive and negative index sets by
\begin{align*}
	I^+(C)&\coloneq\sset{i\in\{1,\dotsc,n\}}{C_i>0},\\
	I^-(C)&\coloneq\sset{i\in\{1,\dotsc,n\}}{C_i<0}.
\end{align*}

For the coordinate arguments below, in \cref{set:coordinate} we use the same notation with $C_i=F_i\cdot C$ and define the positive and negative coordinate sets of a $\pi_Y$-contracted curve $C$ by
\begin{align*}
	J^+(C)&\coloneq\sset{j\in\{1,\dotsc,d\}}{D_{e_j}\cdot C>0},\\
	J^-(C)&\coloneq\sset{j\in\{1,\dotsc,d\}}{D_{e_j}\cdot C<0}.
\end{align*}
By \cref{lem:D_e_j}, we have $D_{e_j}\cdot C=-\sum_{i\in I_j}C_i$.

\begin{lem}\label{lem:sourcecoordinates}
	In \cref{set:coordinate}, let $C$ be a wall curve on $\tilde Y$ with source $i_0$, and let $P$ be a primitive collection such that $r(P)\in\RRp[C]$.
	Then the following hold.
	\begin{enumerate}
		\item\label{item:sourceprimitive}
			The primitive collection $P$ is given by
			\[
				P=\sset{\rho_{e_j}}{j\in J^+(C)}\cup\sset{\rho_{v_i}}{i\in I^+(C)}.
			\]
		\item\label{item:sourcecoordinates}
			The positive coordinate set is
			\[
				J^+(C)=J_{i_0}\setminus\bigcup_{i\in I^+(C)}J_i.
			\]
		\item\label{item:centeravoidance}
			For every $1\le i\le n$, one has $J_i\subseteq J^+(C)$ if and only if $I^+(C)=\emptyset$ and $i=i_0$.
	\end{enumerate}
\end{lem}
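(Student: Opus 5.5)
The plan is to read everything off the identity $[C]=r(P)$ from \cref{lem:vennlocal}, combined with the behaviour of $D_{e_j}\cdot C$ along the tower, which is tracked with \cref{lem:wallrec}. I prove the three parts in order.

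\emph{Part (1).} By \cref{lem:vennlocal}, $D_\rho\cdot C=\mathbf 1_{\rho\in P}-c_\rho\le1$, and $D_\rho\cdot C=1$ forces $\rho\in P$. Since this quantity is an integer at most $1$, the set $\{\rho : D_\rho\cdot C>0\}$ is contained in $P$. For the reverse inclusion I will show the standard fact $P\cap\gamma(P)(1)=\emptyset$. Suppose $\rho\in P\cap\gamma(P)(1)$. The proper subset $P\setminus\{\rho\}$ spans a cone $\sigma'\in\tilde\Sigma$, and $s_{P\setminus\{\rho\}}=s_P-u_\rho$ lies in $\relint\sigma'$. On the other hand, $s_P-u_\rho=\sum(c_{\rho'}-\mathbf 1_{\rho'=\rho})u_{\rho'}$ lies in $\gamma(P)$ because $c_\rho\ge1$. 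Hence $\sigma'$ is a face of $\gamma(P)$, so $P=\sigma'(1)\cup\{\rho\}\subseteq\gamma(P)(1)$, which contradicts non-primitivity of... i.e.\ contradicts that $P$ is not contained in a cone. So every $\rho\in P$ satisfies $D_\rho\cdot C=1>0$. Finally, $D_{v_i}\cdot C=C_i$, and $D_{e_j}\cdot C>0$ exactly when $j\in J^+(C)$, which gives the stated description of $P$.

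\emph{Part (2).} I track $D_{e_j}\cdot C^{(i)}$ on $Y^{(i)}$ for $i\ge i_0$, using \cref{lem:D_e_j} on the truncated towers. At stage $i_0$, the curve $C^{(i_0)}$ is a wall curve contracted by the $i_0$-th blowup. The normalized wall relation in \cref{lem:wallrec}\cref{item:vertical} gives $D_{e_j}\cdot C^{(i_0)}=1$ for $j\in J_{i_0}$ and $0$ otherwise, consistent with $C_{i_0}=-1$. Each later blowup changes $D_{e_j}\cdot C$ by $-C_i$ for $j\in J_i$. Hence
\[
	D_{e_j}\cdot C=\mathbf 1_{j\in J_{i_0}}-\sum_{i>i_0,\;i\in I_j}C_i .
\]
The inclusion $J^+(C)\subseteq J_{i_0}\setminus\bigcup_{i\in I^+}J_i$ and its converse must come from primitivity, not from this formula alone. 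The key combinatorial input is that after the star subdivision along $\sigma_i$, no cone contains all of $\{\rho_{e_j}\}_{j\in J_i}$. Moreover, I expect $\{\rho_{v_i}\}\cup\{\rho_{e_j}\}$ with $j\in J_i$ to lie in a common cone, unless a later center forbids it.

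I would argue as follows. If $j\in J_i$ with $i\in I^+$ and also $j\in J^+$, then both $\rho_{v_i}$ and $\rho_{e_j}$ lie in $P$. Replacing $e_j$ by $v_i$ in a cone containing the remaining rays should then put $P$ inside a cone, giving a contradiction. Similarly, if $j\notin J_{i_0}$ had positive value, the formula forces some $C_i<0$ with $j\in J_i$. Using \cref{lem:wallcurves}\cref{item:wallentry} I would bound $-C_i$ by the positive entries and derive the same kind of contradiction. For the reverse inclusion, take $j\in J_{i_0}$ outside every $J_i$ with $i\in I^+$. Only negative $C_i$ contribute to the sum, so $D_{e_j}\cdot C\ge1>0$.

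\emph{Part (3) and the main obstacle.} Part (3) then follows from part (2). If $J_i\subseteq J^+(C)$, then $\{\rho_{e_j}\}_{j\in J_i}\subseteq P$ is itself not contained in a cone when $i$ has been subdivided. By minimality of $P$, this forces $P=\{\rho_{e_j}\}_{j\in J_i}$, so $I^+=\emptyset$ and $J^+(C)=J_i$. Part (2) then gives $J_i=J_{i_0}$, hence $i=i_0$. The converse is immediate from part (2) with $I^+=\emptyset$. I expect the main obstacle to be the precise description of which ray sets form cones of $\tilde\Sigma$ that is used in part (2). Any exchange argument there has to respect the later star subdivisions.
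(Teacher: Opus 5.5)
Your part (1) is correct, and it takes a different route from the paper. The paper shows that the displayed rays lie in no cone of $\tilde\Sigma$ and then invokes minimality of $P$. You instead prove the general fact $P\cap\gamma(P)(1)=\emptyset$; your relative-interior argument for this is sound. It gives $D_\rho\cdot C=1$ for every $\rho\in P$, and together with \cref{lem:vennlocal} this yields $P=\sset{\rho}{D_\rho\cdot C>0}$. This is cleaner and does not depend on the star-subdivision structure. Your proof that $J_{i_0}\setminus\bigcup_{i\in I^+(C)}J_i\subseteq J^+(C)$ coincides with the paper's. Your part (3) is also the paper's argument, and it is valid once (2) is known.

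The gap is the other inclusion, $J^+(C)\subseteq J_{i_0}\setminus\bigcup_{i\in I^+(C)}J_i$. This is the real content of (2), and your proposal does not prove it. The exchange step cannot work as written. Swapping $\rho_{e_j}$ for $\rho_{v_i}$ produces a cone that no longer contains $\rho_{e_j}$, so it never places $P$ in a cone. Moreover, inside $\tilde\Sigma$ the rays $\rho_{v_i}$ and $\rho_{e_j}$ with $j\in J_i$ can genuinely share a cone, so this pair gives no contradiction there. The case $j\notin J_{i_0}$ is only asserted.

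The missing idea is to pass to the coarser fan $\Sigma^{(i_0)}$ and derive the contradiction from the source cone $\sigma_{i_0}$ rather than from $P$. Fix $j\in J^+(C)$.
\begin{itemize}
\item By (1) and primitivity, $P\setminus\{\rho_{e_j}\}$ lies in a cone of $\tilde\Sigma$, hence in a cone $\sigma\in\Sigma^{(i_0)}$.
\item Every $i\in I^+(C)$ has $i>i_0$, so $\sigma_i\in\Sigma^{(i_0)}$ by \cref{lem:conesurvive}.
\item Since $v_i\in\relint(\sigma_i)\cap\sigma$, the face $\sigma_i\cap\sigma$ of $\sigma_i$ equals $\sigma_i$.
\item Hence $\sigma$ contains $e_{j'}$ for every $j'\in(J^+(C)\setminus\{j\})\cup\bigcup_{i\in I^+(C)}J_i$.
\end{itemize}
Now suppose $j\notin J_{i_0}\setminus\bigcup_{i\in I^+(C)}J_i$. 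Then your covering $J_{i_0}\subseteq J^+(C)\cup\bigcup_{i\in I^+(C)}J_i$ still holds after removing $j$ from $J^+(C)$. So $\sigma\supseteq\sigma_{i_0}$, which contradicts \cref{lem:conesurvive}.

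Without this step, (3) is also incomplete, because it uses (2) to obtain $J_i=J_{i_0}$.
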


\begin{proof}
	We first prove \cref{item:sourceprimitive}.
	By \cref{lem:vennlocal}, every ray on the right-hand side of \cref{item:sourceprimitive} belongs to $P$.

	Let $j\in J_{i_0}\setminus\bigcup_{i\in I^+(C)}J_i$.
	Then $i_0\in I_j$ and $I_j\cap I^+(C)=\emptyset$.
	Since $C_{i_0}=-1$ by \cref{lem:wallcurves}~\cref{item:source}, we have
	\[
		D_{e_j}\cdot C=-\sum_{h\in I_j}C_h\ge-C_{i_0}=1
	\]
	by \cref{lem:D_e_j}.
	Thus $j\in J^+(C)$.
	In particular,
	\begin{equation}\label{eq:sourcecover}
		J_{i_0}\subseteq J^+(C)\cup\bigcup_{i\in I^+(C)}J_i.
	\end{equation}

	Next, let $J\subseteq J^+(C)$ be a subset, and assume that the rays $\rho_{e_j}$ for $j\in J$ and $\rho_{v_i}$ for $i\in I^+(C)$ are contained in a cone $\tilde\sigma\in\tilde\Sigma$.
	We show that
	\begin{equation}\label{eq:sourceconeconstraint}
		J_{i_0}\nsubseteq J\cup\bigcup_{i\in I^+(C)}J_i.
	\end{equation}
	Since $\tilde\Sigma$ refines $\Sigma^{(i_0)}$, there is a cone $\sigma\in\Sigma^{(i_0)}$ containing $\tilde\sigma$.
	For each $i\in I^+(C)$, we have $i>i_0$, so $\sigma_i\in\Sigma^{(i_0)}$ by \cref{lem:conesurvive}.
	The intersection $\sigma_i\cap\sigma\in\Sigma^{(i_0)}$ is a face of $\sigma_i$ containing $v_i\in\relint(\sigma_i)$, so it equals $\sigma_i$.
	Thus $\sigma$ contains $\sigma_i$ for every $i\in I^+(C)$, as well as $\rho_{e_j}$ for every $j\in J$.
	If $J_{i_0}\subseteq J\cup\bigcup_{i\in I^+(C)}J_i$, then $\sigma_{i_0}\subseteq\sigma$, contradicting \cref{lem:conesurvive}.
	This proves \eqref{eq:sourceconeconstraint}.

	By \eqref{eq:sourcecover} and \eqref{eq:sourceconeconstraint} with $J\coloneq J^+(C)$, the rays on the right-hand side of \cref{item:sourceprimitive} are not contained in any cone of $\tilde\Sigma$.
	Since they form a subset of the primitive collection $P$, this subset must equal $P$.
	This proves \cref{item:sourceprimitive}.

	To prove \cref{item:sourcecoordinates}, let $j\in J^+(C)$.
	By \cref{item:sourceprimitive} and the primitivity of $P$, the set $P\setminus\{\rho_{e_j}\}$ is contained in a cone of $\tilde\Sigma$.
	Applying \eqref{eq:sourceconeconstraint} with $J\coloneq J^+(C)\setminus\{j\}$ gives
	\[
		J_{i_0}\nsubseteq(J^+(C)\setminus\{j\})\cup\bigcup_{i\in I^+(C)}J_i.
	\]
	Together with \eqref{eq:sourcecover}, this implies $j\in J_{i_0}\setminus\bigcup_{i\in I^+(C)}J_i$.
	The reverse inclusion was proved above, so \cref{item:sourcecoordinates} follows.

	Finally, we prove \cref{item:centeravoidance}.
	Assume that $J_i\subseteq J^+(C)$.
	By \cref{item:sourceprimitive}, the set $\sset{\rho_{e_j}}{j\in J_i}$ is a subset of $P$.
	By \cref{lem:conesurvive}, this set is not contained in any cone of $\tilde\Sigma$.
	The primitivity of $P$ therefore gives $P=\sset{\rho_{e_j}}{j\in J_i}$.
	It follows from \cref{item:sourceprimitive,item:sourcecoordinates} that $I^+(C)=\emptyset$ and $J_i=J^+(C)=J_{i_0}$.
	Since the sets $J_1,\dotsc,J_n$ are distinct by \cref{set:coordinate}, we obtain $i=i_0$.
	Conversely, if $I^+(C)=\emptyset$ and $i=i_0$, then \cref{item:sourcecoordinates} gives $J_i=J^+(C)$.
	This proves \cref{item:centeravoidance}.
\end{proof}

\begin{prop}\label{prop:coordnum}
	In \cref{set:coordinate}, let $C\subseteq\pi_Y^{-1}(0)$ be a $\pi_Y$-contracted curve whose class generates an extremal ray of $\NEb(\tilde Y/\AA^d)$.
	Let $i_0$ be the source of $C$, and assume that $C_{i_0}=-1$.
	Then
	\begin{enumerate}
		\item\label{item:necelementary_coord} $C$ is elementary.
		\item\label{item:numvenn_coord}
			For every coordinate $j\in\{1,\dotsc,d\}$, one has
			\[
				\sum_{i\in I_j}C_i
				\begin{cases}
					=-1 & j\in J_{i_0}\setminus\bigcup_{i\in I^+(C)}J_i,\\
					\ge0 & \text{otherwise}.
				\end{cases}
			\]
			In particular, $\sum_{i\in I_j\setminus\{i_0\}}C_i\ge0$ for every $j$.
		\item\label{item:numtight_coord}
			Let $i\in\{1,\dotsc,n\}$, and assume $i\ne i_0$ or $I^+(C)\ne\emptyset$.
			Then there exists a coordinate $j\in J_i$ such that $\sum_{h\in I_j}C_h\ge0$.
	\end{enumerate}
\end{prop}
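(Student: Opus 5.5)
The plan is to reduce to a wall curve on the same ray and transfer everything numerically. Since $[C]$ spans an extremal ray $R$ of $\NEb(\tilde Y/\AA^d)$, \cref{thm:toriccone} lets us pick, as in the proof of \cref{prop:coordinate}, a wall curve $C'$ and a primitive collection $P$ with $[C'],r(P)\in R$. By \cref{lem:vennlocal}, $[C']=r(P)$. By \cref{lem:wallcurves}~\cref{item:source}, $C'$ has $C'_{i_0'}=-1$ at its source $i_0'$.

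We have $[C]=\lambda[C']$ for some $\lambda>0$. The source depends only on the class, since it is the first nonzero coordinate of the intersection vector. Hence $i_0'=i_0$, and comparing the $i_0$-th entries gives $-1=-\lambda$, so $\lambda=1$. Thus $C$ and $C'$ have the same intersection vector. Every assertion in the statement depends only on this vector, together with the sets $I^+(C)$ and $J^+(C)$ computed from it via \cref{lem:D_e_j}. So we may replace $C$ by $C'$ and assume $C$ is a wall curve.

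Part \cref{item:necelementary_coord} is then exactly what the proof of \cref{prop:coordinate} shows for the chosen wall curve: it combines \cref{lem:wallcurves} with the upper bound $C_i\le1$ from \cref{lem:vennlocal}.

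For \cref{item:numvenn_coord}, use $\sum_{i\in I_j}C_i=-D_{e_j}\cdot C$ together with \cref{lem:vennlocal}~\cref{item:vlnorm}, which gives $D_{e_j}\cdot C\le1$. If $D_{e_j}\cdot C>0$, then $j\in J^+(C)$, so $D_{e_j}\cdot C=1$, and \cref{lem:sourcecoordinates}~\cref{item:sourcecoordinates} identifies $J^+(C)=J_{i_0}\setminus\bigcup_{i\in I^+(C)}J_i$. Otherwise the sum is $\ge0$. This gives the displayed dichotomy.

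For the "in particular" statement, first take $j\notin J_{i_0}$: then $i_0\notin I_j$ and the claim is the second case. For $j\in J_{i_0}$, the entry $C_{i_0}=-1$ appears in the sum. In the first case the sum is $-1$, so removing $C_{i_0}$ leaves $0$. In the second case the sum is $\ge0$, so removing $C_{i_0}$ leaves something $\ge1$.

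For \cref{item:numtight_coord}, argue by contrapositive. If every $j\in J_i$ has $\sum_{h\in I_j}C_h<0$, then by \cref{item:numvenn_coord} the sum equals $-1$, so $D_{e_j}\cdot C=1$ and $j\in J^+(C)$. Hence $J_i\subseteq J^+(C)$. Then \cref{lem:sourcecoordinates}~\cref{item:centeravoidance} forces $I^+(C)=\emptyset$ and $i=i_0$, contradicting the hypothesis.

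The main obstacle is the reduction step: one must make sure the hypothesis $C_{i_0}=-1$ really pins the multiple $\lambda$ to $1$. This works because sources coincide for proportional classes with positive ratio, and the wall curve has source entry $-1$. Everything after that is bookkeeping with the earlier lemmas.
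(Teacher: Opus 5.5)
Your proposal is correct and follows the paper's proof. You replace $C$ by a wall curve $C'$ on $R$ with the same intersection vector, which is forced by the source entry $-1$, and then read off \cref{item:numvenn_coord} from \cref{lem:vennlocal} and \cref{lem:sourcecoordinates}~\cref{item:sourcecoordinates}, and \cref{item:numtight_coord} from \cref{lem:sourcecoordinates}~\cref{item:centeravoidance}. The only cosmetic difference is in \cref{item:necelementary_coord}: the paper cites the conclusion of \cref{prop:coordinate}, that $R$ contains an elementary class, whereas you rerun its argument on $C'$.
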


\begin{proof}
	Set $R\coloneq\RRp[C]$.
	By \cref{prop:coordinate}, the relative cone is generated by finitely many elementary curve classes.
	Since $R$ is extremal, it contains the class of an elementary curve $C_{\mathrm{el}}$.
	The classes $[C]$ and $[C_{\mathrm{el}}]$ are positive multiples of one another, so both curves have source $i_0$.
	Their source entries are $C_{i_0}=(C_{\mathrm{el}})_{i_0}=-1$, hence $[C]=[C_{\mathrm{el}}]$.
	Thus $C$ is elementary, proving \cref{item:necelementary_coord}.

	We now prove \cref{item:numvenn_coord,item:numtight_coord}.
	Since $\tilde Y$ is quasi-projective, \cref{thm:toriccone} shows that the relative cone is generated both by wall classes and by primitive relations.
	Since $R$ is extremal, we may choose a wall curve $C'$ and a primitive collection $P$ of $\tilde\Sigma$ such that $[C']\in R$ and $r(P)\in R$.
	The curve $C'$ has source $i_0$, and $C'_{i_0}=-1$ by \cref{lem:wallcurves}~\cref{item:source}.
	Thus $[C']=[C]$.
	In particular, $I^+(C')=I^+(C)$ and $J^+(C')=J^+(C)$.

	Applying \cref{lem:sourcecoordinates}~\cref{item:sourcecoordinates} to $C'$, we obtain
	\[
		J^+(C)=J_{i_0}\setminus\bigcup_{i\in I^+(C)}J_i.
	\]
	By \cref{lem:D_e_j}, for every coordinate $j$ we have
	\[
		\sum_{i\in I_j}C_i=-D_{e_j}\cdot C.
	\]
	If $j\in J^+(C)$, then $D_{e_j}\cdot C=D_{e_j}\cdot C'=1$ by \cref{lem:vennlocal}.
	Otherwise, $D_{e_j}\cdot C\le0$ by the definition of $J^+(C)$.
	This proves the displayed formula in \cref{item:numvenn_coord}.

	If $j\in J_{i_0}$, then removing the term $C_{i_0}=-1$ gives $\sum_{i\in I_j\setminus\{i_0\}}C_i=\sum_{i\in I_j}C_i+1\ge0$.
	If $j\notin J_{i_0}$, then $i_0\notin I_j$, and $\sum_{i\in I_j\setminus\{i_0\}}C_i\ge0$ by the second case of the formula.
	This proves \cref{item:numvenn_coord}.

	Finally, let $i$ be as in \cref{item:numtight_coord}.
	By \cref{lem:sourcecoordinates}~\cref{item:centeravoidance} applied to $C'$, the assumption on $i$ gives $J_i\nsubseteq J^+(C)$.
	Choose a coordinate $j\in J_i\setminus J^+(C)$.
	Then \cref{item:numvenn_coord} gives $\sum_{h\in I_j}C_h\ge0$.
	This proves \cref{item:numtight_coord}.
\end{proof}

We now transfer these constraints to the general setting.

\begin{thm}\label{thm:num}
	In \cref{set:main}, let $C\subseteq\tilde X$ be a $\pi$-contracted curve whose class spans an extremal ray of $\NEb(\tilde X/X)$.
	Let $i_0$ be the source of $C$, and assume that $C_{i_0}=-1$.
	Then
	\begin{enumerate}
		\item\label{item:necelementary} $C$ is elementary.
	\end{enumerate}
	Set $p\coloneq\pi(C)$ and $I_p\coloneq\sset{i\in\{1,\dotsc,n\}}{p\in Z_i}$.
	Choose a coordinate model at $p$ with coordinate sets $J_i$.
	For $1\le j\le d$, put $I_j\coloneq\sset{i\in I_p}{j\in J_i}$.
	Then
	\begin{enumerate}
		\setcounter{enumi}{1}
		\item\label{item:numvenn}
			For every coordinate $j\in\{1,\dotsc,d\}$, one has
			\[
				\sum_{i\in I_j}C_i
				\begin{cases}
					=-1 & j\in J_{i_0}\setminus\bigcup_{i\in I^+(C)}J_i,\\
					\ge0 & \text{otherwise}.
				\end{cases}
			\]
			In particular, $\sum_{i\in I_j\setminus\{i_0\}}C_i\ge0$ for every $j$.
		\item\label{item:numtight}
			Let $i\in I_p$, and assume $i\ne i_0$ or $I^+(C)\ne\emptyset$.
			Then there exists a coordinate $j\in J_i$ such that $\sum_{h\in I_j}C_h\ge0$.
	\end{enumerate}
\end{thm}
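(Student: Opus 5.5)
Part \cref{item:necelementary} needs no local model. By \cref{thm:cone}, the extremal ray $R\coloneq\RRp[C]$ is generated by classes of elementary curves, so it contains the class of some elementary curve $C_{\mathrm{el}}$. The classes $[C]$ and $[C_{\mathrm{el}}]$ are positive multiples of each other, so both have source $i_0$. Since $C_{i_0}=(C_{\mathrm{el}})_{i_0}=-1$, we get $[C]=[C_{\mathrm{el}}]$. Ellementarity depends only on the intersection vector, so $C$ is elementary.

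For \cref{item:numvenn,item:numtight}, I would transfer the problem to the coordinate model at $p=\pi(C)$ and apply \cref{prop:coordnum}. After omitting the empty stages, the nonempty $W_i$ with $i\in I_p$ form an instance of \cref{set:coordinate}, with centers indexed by $I_p$ in the inherited order. In that setting the sets $I_j$ of the coordinate model are exactly the sets $I_j=\sset{i\in I_p}{j\in J_i}$ of the statement.

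Since $C$ is $\pi$-contracted and $\pi(C)=p$, we have $C\subseteq\pi^{-1}(p)$. Set $C_Y\coloneq\tilde\phi_p(C)\subseteq\pi_{Y_p}^{-1}(0)$. By \cref{prop:localmodel}~\cref{item:exceptionaldivisors}, $F_i\cdot C_Y=E_i\cdot C$ for all $i$. For $i\notin I_p$ we have $F_i=0$, so $C_i=0$. Hence $i_0\in I_p$, the source of $C_Y$ is $i_0$, and $(C_Y)_{i_0}=-1$. Moreover $I^+(C_Y)=I^+(C)$, and the two curves have the same entries on $I_p$.

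The key step is to show that $[C_Y]$ spans an extremal ray of $\NEb(\tilde Y_p/\AA^d)$. I would use the injection of cones $\iota_p$ from \cref{lem:NEinj}. By commutativity of that diagram, $\iota_p([C_Y])=[C]$. Suppose $[C_Y]=\alpha_1+\alpha_2$ with $\alpha_1,\alpha_2\in\NEb(\tilde Y_p/\AA^d)$. Applying $\iota_p$, which is linear, gives $\iota_p(\alpha_1)+\iota_p(\alpha_2)=[C]\in R$, with both summands in $\NEb(\tilde X/X)$. Since $R$ is extremal, $\iota_p(\alpha_k)\in R$, so $\iota_p(\alpha_k)=\lambda_k[C]=\iota_p(\lambda_k[C_Y])$ for some $\lambda_k\ge0$. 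Injectivity of the underlying linear map then gives $\alpha_k=\lambda_k[C_Y]$. Also $[C_Y]\neq0$ because $(C_Y)_{i_0}=-1$. So $\RRp[C_Y]$ is an extremal ray.

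Now \cref{prop:coordnum}~\cref{item:numvenn_coord,item:numtight_coord} applies to $C_Y$. Its conclusions involve only the entries $C_i$ for $i\in I_p$, the set $I^+$, and the sets $J_i$ and $I_j$, and all of these agree with the data for $C$. This gives \cref{item:numvenn,item:numtight} verbatim, with $i$ ranging over $I_p$ in \cref{item:numtight}.

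The main obstacle is the bookkeeping of indices. Omitting the empty stages must preserve the hypotheses of \cref{set:coordinate}. It does, since the nonempty $W_i$ inherit the simple normal crossings and non-containment conditions, and $\card{J_i}=c_i\ge2$. One must also check that the union $\bigcup_{i\in I^+(C)}J_i$ only involves $i\in I_p$, which holds because $I^+(C)\subseteq I_p$.
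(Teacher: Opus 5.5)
Your proposal is correct and follows the paper's proof: pass to the coordinate model at $p$, use the injection $\iota_p$ of \cref{lem:NEinj} to see that $\RRp[C_Y]$ is extremal, and read off parts (2) and (3) from \cref{prop:coordnum} after omitting the empty stages. There are two minor differences. You prove part (1) directly from \cref{thm:cone}, which is the same argument the paper runs inside \cref{prop:coordnum} and then transfers back via \cref{prop:localmodel}. You also spell out why $\RRp[C_Y]$ is extremal, using injectivity of $\iota_p$ and the face property, whereas the paper only asserts this.
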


\begin{proof}
	Let $\pi_{Y_p}\colon\tilde Y_p\to\AA^d$ be the chosen coordinate model.
	Let $C_Y\subseteq\pi_{Y_p}^{-1}(0)$ be the curve corresponding to $C$ under the fiber isomorphism in \cref{prop:localmodel}~\cref{item:fiberproduct}.
	By \cref{lem:NEinj}, we have a commutative diagram
	\[
	\begin{tikzcd}
		\NEb(\pi^{-1}(p))\arrow[r,"\sim"]\arrow[d]
			& \NEb(\pi_{Y_p}^{-1}(0))\arrow[d,two heads]\\
		\NEb(\tilde X/X)
			& \NEb(\tilde Y_p/\AA^d)\arrow[l,hook',"\iota_p"]
	\end{tikzcd}
	\]
	In particular, $\iota_p([C_Y])=[C]$, and $\RRp[C_Y]$ is an extremal ray of $\NEb(\tilde Y_p/\AA^d)$.

	By \cref{prop:localmodel}, the curves $C$ and $C_Y$ have the same intersection vector.
	Each empty stage contributes a zero entry, so $C_i=0$ for $i\notin I_p$.
	In particular, $i_0\in I_p$ and $I^+(C)\subseteq I_p$.
	Omitting the empty stages preserves the order of the remaining indices, so $C_Y$ still has source entry $-1$ for the remaining sequence of blowups.
	We may therefore apply \cref{prop:coordnum} to $C_Y$.
	Part \cref{item:necelementary_coord}, together with \cref{prop:localmodel}, gives \cref{item:necelementary}.

	After restoring the original indices, $C_Y$ has source $i_0$ and positive index set $I^+(C)$.
	Its coordinate sets are the given $J_i$, so the sums in \cref{prop:coordnum}~\cref{item:numvenn_coord} are exactly those in \cref{item:numvenn}.
	This proves \cref{item:numvenn}.

	For each $i\in I_p$ satisfying the assumption in \cref{item:numtight}, part \cref{item:numtight_coord} of \cref{prop:coordnum} gives the required coordinate $j\in J_i$.
	This proves \cref{item:numtight}.
\end{proof}

\subsection{Realization of extremal rays}\label{sec:rays}

We show that every nonzero integer vector satisfying the numerical conditions in \cref{def:elementary} occurs as the intersection vector of an elementary curve spanning an extremal ray.
We then give an example showing that the simple normal crossings hypothesis in \cref{thm:cone} cannot be omitted.
For the coordinate construction below, we use the notation of \cref{set:main}, with $X=\AA^d$, $Z_i=W_i$, $\tilde X=\tilde Y$, $\pi=\pi_Y$, and $E_i=F_i$ as in \cref{set:coordinate}.

\begin{prop}\label{prop:realization}
	Let $n\ge1$ be an integer, and let $\alpha=(\alpha_1,\dotsc,\alpha_n)\in\ZZ^n\setminus\{0\}$ be a vector satisfying the numerical conditions in \cref{def:elementary}.
	Thus, setting $i_0\coloneq\min\sset{i}{\alpha_i\ne0}$, we have
	\[
		\alpha_{i_0}=-1,\qquad
		-\bigcard{\sset{h<i}{\alpha_h>0}}\le\alpha_i\le1\quad(i>i_0).
	\]
	Then there exist an integer $d\ge2$, coordinate subspaces $Z_1,\dotsc,Z_n\subseteq X=\AA^d$ satisfying the assumptions of \cref{set:main}, and an elementary curve $C\subseteq\tilde X$ in the successive blowup $\pi\colon\tilde X\to X$ along these centers such that
	\begin{itemize}
		\item $(C_1,\dotsc,C_n)=\alpha$, and
		\item $\RRp[C]$ is an extremal ray of $\NEb(\tilde X/X)$.
	\end{itemize}
\end{prop}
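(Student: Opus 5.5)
The plan is to build the centers as coordinate subspaces of a large affine space, one center per index, each with its own private coordinates. The desired curve will then appear as a wall curve whose intersection numbers can be tracked step by step with \cref{lem:wallrec}, \cref{lem:D_e_j} and \cref{lem:wallcurves}~\cref{item:wallentry}.

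\emph{The construction.} For $i<i_0$ take $J_i=\{a_i,a_i'\}$, two fresh coordinates. Then $C^{(i)}$ is a point for these $i$, so $C_i=0$ automatically. For the source take $J_{i_0}=\{a,b\}$, two fresh coordinates; the starting curve is the wall curve given by the exceptional line over the origin, whose source entry is $-1$ by \cref{lem:wallrec}~\cref{item:vertical}. For $i>i_0$ the choice depends on $\alpha_i$:
\begin{itemize}
	\item If $\alpha_i=0$, take two fresh coordinates not involving $a,b$. The strict transform of $W_i$ then stays away from the curve, and \cref{lem:wallrec}~\cref{item:curveimage} gives $C_i=0$.
	\item If $\alpha_i=1$, take $J_i=\{a,g_i\}$ with $g_i$ fresh. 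The strict transform of $W_i$ meets the curve transversally in one point, and $D_{e_a}\cdot C^{(i-1)}=1$ by \cref{lem:D_e_j}, so \cref{lem:wallrec} gives $C_i=1$.
	\item If $\alpha_i=-k<0$, choose $k$ earlier indices $h$ with $\alpha_h=1$, forming a set $S_i$. Introduce one new coordinate $t_i$ and put it into $J_h$ for $h\in S_i$ and into $J_i$. Complete $J_i$ by coordinates along which the curve lies (for example $b$ or the $g_h$), so that $C^{(i-1)}\subseteq W_i^{(i-1)}$.
\end{itemize}
In the negative case, \cref{lem:D_e_j} gives $D_{e_{t_i}}\cdot C^{(i-1)}=-\sum_{h\in S_i}C_h=-k$. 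I will then show that the ray of $\sigma_i$ selected in \cref{lem:wallrec} is $\rho_{e_{t_i}}$, so that $C_i=-k$. Because the coordinates are private, the hypotheses $J_h\not\subseteq J_i$ for $h<i$ and $\card{J_i}\ge2$ are easy to arrange. I will fix all the $J_i$ in advance and then verify the intersection vector inductively on $i$, running the wall-curve recursion of \cref{lem:wallcurves}.

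\emph{Extremality.} Next I will show that $\RRp[C]$ is an extremal ray of $\NEb(\tilde Y/\AA^d)$, which in the coordinate case is a rational polyhedral cone generated by wall classes (\cref{thm:toriccone}). The route is to exhibit a $\pi_Y$-nef torus-invariant divisor $H=-\sum_i a_iF_i+\sum_j b_jD_{e_j}$ whose only vanishing wall classes are multiples of $[C]$. Equivalently, I will identify the primitive collection $P$ predicted by \cref{lem:sourcecoordinates}, namely
\[
	P=\sset{\rho_{e_j}}{j\in J^+(C)}\cup\sset{\rho_{v_i}}{i\in I^+(C)},
\]
and show that $r(P)=[C]$ and that no other wall class is proportional to it.

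\emph{Main obstacle.} The hard step is this extremality verification. I would first try induction on $n$, adding the last center. If $\alpha_n\ge0$, the last blowup is along a center meeting the relevant curves at most at points, and pulling back a supporting divisor should still support the strict transform's ray. If $\alpha_n<0$, the curve is a section of $E_n$ over $C^{(n-1)}$, as in \cref{exa:forced}. Here I would adjust the supporting divisor by a small multiple of $-F_n$ plus a divisor ample on the fibres of the last blowup, and check, using \cref{lem:wallrec}, that every other wall class of the new fan remains strictly positive. If this becomes unwieldy, the fallback is to use the freedom in $d$: add enough private coordinates that the fan near the relevant maximal cones is a product, and compute the nef cone there directly.
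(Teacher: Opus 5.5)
\textbf{There is a genuine gap: your construction does not produce $\alpha$ in general, and extremality is not proved.}

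\emph{Positive entries.} Take $J_{i_0}=\{a,b\}$ and $J_i=\{a,g_i\}$ for every $i$ with $\alpha_i=1$. Then \cref{lem:D_e_j} gives $D_{e_a}\cdot C^{(i-1)}=-C_{i_0}-\card{\sset{h}{i_0<h<i,\ \alpha_h=1}}$, because $a$ lies in every earlier positive center. This is $0$, not $1$, from the second positive index on. For $\alpha=(-1,1,1)$ you obtain $(-1,1,0)$. Geometrically, both positive centers pass through the same point $[0:1]$ of the fibre $\PP^1_{[a:b]}$; after the first one is blown up, the second meets the new exceptional line, not $C$.

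This cannot be repaired while the source center has codimension two. If $\RRp[C]$ is extremal, it contains a wall curve with the same intersection vector, since both have source entry $-1$. For $\alpha$ without negative entries, \cref{lem:wallcurves}~\cref{item:wallentry} then forces each positive index $i_k$ to use a coordinate $j_k\in J_{i_0}\cap J_{i_k}$ lying in no earlier positive center. These $j_k$ are distinct, so $\card{J_{i_0}}\ge\card{I^+(\alpha)}$; for example, $(-1,1,1,1)$ is out of reach. The missing idea is to give the source center one private coordinate per positive index, shared only with that index's center. This is the paper's choice $J_1=\{x_1\}\cup\sset{x_i}{i\in I^+(\alpha)}$ with $x_i,y_i\in J_i$, which gives $D_{e_{x_i}}\cdot C^{(i-1)}=1$ for every $i\in I^+(\alpha)$.

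\emph{Negative entries and extremality.} Here there is a parallel defect.
\begin{itemize}
\item Completing $J_i$ by $b$ fails: the strict transform of $V(b)$ meets $C^{(i_0)}$ only at $[1:0]$, so $C^{(i-1)}\not\subseteq W_i^{(i-1)}$.
\item Completing by some $g_h$ (degree $-1$ on $C^{(i-1)}$) or by a fresh coordinate (degree $0$) makes the normal bundle of $W_i^{(i-1)}$ along $C^{(i-1)}$ equal to $\Ob{\PP^1}(-k)\oplus\Ob{\PP^1}(-1)$, resp.\ $\Ob{\PP^1}(-k)\oplus\Ob{\PP^1}$. The section of $E_i$ of $E_i$-degree $-k$ is then numerically the other invariant section plus $k-1$, resp.\ $k$, fibres.
\item The obstruction survives on $\tilde X$: \cref{thm:num}~\cref{item:numvenn} fails at the completing coordinate $j\notin J_{i_0}$. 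There $\sum_{h'\in I_j}C_{h'}$ is at most $1-k$ (resp.\ $-k$), which is negative for $k\ge2$ (resp.\ $k\ge1$).
\end{itemize}
Every coordinate of $J_i$ must have degree exactly $-k$ on $C^{(i-1)}$. The paper achieves this with $J_i=\{z_i,w_i\}$, adding both $z_i$ and $w_i$ to $J_h$ for $h\in A_i$, so the preimage of the curve is $C\times\PP^1$.

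The extremality step itself is only sketched. The route you call equivalent, exhibiting $P$ with $r(P)=[C]$, does not prove extremality, since primitive relations need not span extremal rays. What the paper supplies is a classification of extremal wall classes. Using \cref{lem:vennlocal,lem:conesurvive}, every such class other than the $\ell_i$ ($i\in I^-(\alpha)\setminus\{1\}$) satisfies $C'_i=-\sum_{h\in A_i}C'_h$ and has source in $\{1\}\cup I^+(\alpha)$. After that, the explicit divisor $H$ with weights $\lambda_i=2^{\card{\sset{h\in I^+(\alpha)}{h>i}}}$ is nef and vanishes only on $\RRp[C]$.

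A minor point: for $\alpha_i=0$ with fresh coordinates, $W_i^{(i-1)}$ contains $C^{(i-1)}$ rather than avoiding it, since the origin lies in every coordinate center. You must therefore pick a section of a trivial $\PP^1$-bundle. The paper handles zero entries by a product with $\Bl_0\AA^2$.
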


\begin{proof}
	We first explain how to insert a zero entry into a realization.
	Given a realization $\pi\colon\tilde X\to X=\AA^d$ with centers $Z_1,\dotsc,Z_n$ and curve $C$, choose an integer $0\le i\le n$ and consider the centers
	\[
		Z_1\times\AA^2,\dotsc,Z_i\times\AA^2,X\times\{0\},Z_{i+1}\times\AA^2,\dotsc,Z_n\times\AA^2
	\]
	in $X\times\AA^2$, in this order.
	The resulting successive blowup is the product morphism
	\[
		\tilde X\times\Bl_0\AA^2\longrightarrow X\times\AA^2.
	\]
	For any closed point $q\in\Bl_0\AA^2$, the curve $C\times\{q\}$ has intersection vector
	\[
		(C_1,\dotsc,C_i,0,C_{i+1},\dotsc,C_n).
	\]
	Its class still spans an extremal ray, since the relative cone is the direct sum of the relative cones of the two factors.

	By omitting zero entries and inserting them after the construction, we may therefore assume that $\alpha_i\ne0$ for every $i$.
	In particular, $\alpha_1=-1$.
	If $\alpha$ has no positive entry, then the numerical conditions give $\alpha=(-1)$, which is realized by the exceptional curve of $\Bl_0\AA^2\to\AA^2$.

	We may now assume that $\alpha$ has a positive entry.
	Set
	\[
		I^+(\alpha)\coloneq\sset{i}{\alpha_i=1},\qquad
		I^-(\alpha)\coloneq\sset{i}{\alpha_i<0}.
	\]
	By the numerical conditions, for each $i\in I^-(\alpha)\setminus\{1\}$ we may choose a subset
	\[
		A_i\subseteq\sset{h\in I^+(\alpha)}{h<i},\qquad \card{A_i}=-\alpha_i.
	\]
	Take $X=\AA^{2n-1}$ with coordinates
	\[
		x_1,\qquad
		x_i,y_i\quad(i\in I^+(\alpha)),\qquad
		z_i,w_i\quad(i\in I^-(\alpha)\setminus\{1\}).
	\]
	Let $Z_i$ be the coordinate subspace cut out by the coordinates in $J_i$, where
	\[
	\begin{aligned}
		J_1&=\{x_1\}\cup\sset{x_i}{i\in I^+(\alpha)},\\
		J_i&=\{x_i,y_i\}\cup\bigcup_{\substack{h\in I^-(\alpha)\setminus\{1\}\\i\in A_h}}\{z_h,w_h\}\qquad(i\in I^+(\alpha)),\\
		J_i&=\{z_i,w_i\}\qquad(i\in I^-(\alpha)\setminus\{1\}).
	\end{aligned}
	\]
	Every center has codimension at least two.
	The coordinate $x_1$ belongs only to $J_1$, and, for each $i\in I^+(\alpha)$, the coordinate $y_i$ belongs only to $J_i$.
	For each $i\in I^-(\alpha)\setminus\{1\}$ and $h>i$, no coordinate in $J_i$ belongs to $J_h$, since every index in $A_i$ is less than $i$.
	Thus $J_i\not\subseteq J_h$ for $i<h$.
	Since the centers are coordinate subspaces, they satisfy all the assumptions of \cref{set:main}.

	We first construct a wall curve with intersection vector $\alpha$.
	Define the lattice
	\[
		N\coloneq\ZZ e_{x_1}
		\oplus\bigoplus_{i\in I^+(\alpha)}(\ZZ e_{x_i}\oplus\ZZ e_{y_i})
		\oplus\bigoplus_{i\in I^-(\alpha)\setminus\{1\}}(\ZZ e_{z_i}\oplus\ZZ e_{w_i}).
	\]
	For each $i$, let $v_i\in N$ be the primitive ray generator introduced by the $i$-th blowup, as in \cref{set:coordinate}.
	Then
	\[
	\begin{aligned}
		v_1&=e_{x_1}+\sum_{i\in I^+(\alpha)}e_{x_i},\\
		v_i&=e_{x_i}+e_{y_i}+\sum_{\substack{h\in I^-(\alpha)\setminus\{1\}\\i\in A_h}}v_h\quad(i\in I^+(\alpha)),\\
		v_i&=e_{z_i}+e_{w_i}\quad(i\in I^-(\alpha)\setminus\{1\}).
	\end{aligned}
	\]
	Hence
	\begin{equation}\label{eq:realizationrelation}
		e_{x_1}+\sum_{i\in I^+(\alpha)}v_i
		=v_1+\sum_{i\in I^+(\alpha)}e_{y_i}-\sum_{i\in I^-(\alpha)\setminus\{1\}}\alpha_iv_i.
	\end{equation}
	To realize this relation as a wall relation, set $S_0\coloneq\{e_{x_1}\}\cup\sset{v_i}{i\in I^+(\alpha)}$.
	For each tuple $t=(t_i)_{i\in I^-(\alpha)\setminus\{1\}}$ with $t_i\in\{z_i,w_i\}$, set
	\[
		S_t\coloneq\{v_1\}\cup\sset{e_{y_i}}{i\in I^+(\alpha)}\cup\bigcup_{i\in I^-(\alpha)\setminus\{1\}}\{v_i,e_{t_i}\}.
	\]
	We claim that the final fan $\tilde\Sigma$ contains the maximal cones
	\begin{equation}\label{eq:realizationcones}
		\sigma_u^t\coloneq\Cone(S_t\cup(S_0\setminus\{u\}))
		\qquad(u\in S_0).
	\end{equation}
	To see this, start with the unique maximal cone of the fan of $X=\AA^{2n-1}$ in $N_\RR$, namely
	\[
		\sigma_0\coloneq\Cone(
			e_{x_1},\;
			e_{x_i},e_{y_i}\; (i\in I^+(\alpha)),\,
			e_{z_i},e_{w_i}\; (i\in I^-(\alpha)\setminus\{1\}))
	\]
	and choose a maximal cone at each successive star subdivision.
	At the first subdivision, replace $e_{x_1}$ with $v_1$ if $u=e_{x_1}$, and replace $e_{x_i}$ with $v_1$ if $u=v_i$.
	At each stage $i\in I^+(\alpha)$, replace $e_{x_i}$ with $v_i$ if $e_{x_i}$ is still present.
	Otherwise, the center cone is not contained in the current cone, which is unchanged at that stage.
	At each stage $i\in I^-(\alpha)\setminus\{1\}$, replace $e_{w_i}$ with $v_i$ if $t_i=z_i$, and replace $e_{z_i}$ with $v_i$ if $t_i=w_i$.
	These choices are possible because $h<i$ whenever $h\in A_i$.
	The resulting cone is $\sigma_u^t$, proving the claim.

	Fix a tuple $t$ and distinct elements $u,u'\in S_0$.
	The cones $\sigma_u^t$ and $\sigma_{u'}^t$ share a wall, and $u$ and $u'$ are the two generators outside this wall.
	Their coefficients in \eqref{eq:realizationrelation} are both $1$, so this is the normalized wall relation.
	Since $E_i=D_{v_i}$ by \cref{lem:validstar}, the corresponding wall curve $C$ has intersection vector $\alpha$ and is therefore elementary.
	Set $R\coloneq\RRp[C]$.

	It remains to prove that $R$ is extremal.
	We will construct a $\pi$-nef divisor whose degree vanishes precisely on $R$ within $\NEb(\tilde X/X)$.
	Let $C'$ be a wall curve spanning an extremal ray of $\NEb(\tilde X/X)$.
	Since $\tilde X$ is quasi-projective, \cref{thm:toriccone} gives a primitive collection $P$ with $r(P)\in\RRp[C']$.
	By \cref{lem:vennlocal}, we have $[C']=r(P)$, and every torus-invariant prime divisor has degree at most $1$ on $C'$.

	For each $i\in I^-(\alpha)\setminus\{1\}$, the two coordinate divisors have the same degree by \cref{lem:D_e_j}:
	\[
		\delta_i\coloneq D_{e_{z_i}}\cdot C'=D_{e_{w_i}}\cdot C'=-C'_i-\sum_{h\in A_i}C'_h.
	\]
	We show that $\delta_i\ge0$.
	Suppose that $\delta_i<0$.
	In the notation for primitive relations in \cref{ss:conegeneration}, the equality $[C']=r(P)$ gives $D_\rho\cdot C'=\mathbf 1_{\rho\in P}-c_\rho$.
	Thus $c_\rho>0$ for $\rho=\rho_{e_{z_i}}$ and $\rho=\rho_{e_{w_i}}$, so both rays belong to $\gamma(P)(1)$.
	Since $\gamma(P)\in\tilde\Sigma$, this contradicts \cref{lem:conesurvive}, which states that no cone of $\tilde\Sigma$ contains both rays.
	Therefore $\delta_i\ge0$.
	If $\delta_i>0$, then $\delta_i=1$ and both rays belong to $P$ by \cref{lem:vennlocal}.
	The pair is itself a primitive collection, so $P=\{\rho_{e_{z_i}},\rho_{e_{w_i}}\}$.
	Its relation is $e_{z_i}+e_{w_i}=v_i$, so $[C']=\ell_i$.
	Consequently, if $[C']\ne\ell_i$ for every $i\in I^-(\alpha)\setminus\{1\}$, then all the degrees $\delta_i$ vanish, so
	\begin{equation}\label{eq:realizationentries}
		C'_i=-\sum_{h\in A_i}C'_h\qquad(i\in I^-(\alpha)\setminus\{1\}).
	\end{equation}
	In this case, the source of $C'$ lies in $\{1\}\cup I^+(\alpha)$.
	Indeed, if it were $i\in I^-(\alpha)\setminus\{1\}$, then all entries on the right-hand side of \eqref{eq:realizationentries} would vanish, contradicting $C'_i\ne0$.
	If the source is $1$, then $C'_1=-1$ by \cref{lem:wallcurves}, so
	\[
		1-C'_i=D_{e_{x_i}}\cdot C'\le1\qquad(i\in I^+(\alpha)).
	\]
	Together with $C'_i\le1$, this implies $C'_i\in\{0,1\}$ for every $i\in I^+(\alpha)$.

	We now construct the required divisor.
	For each $i\in I^+(\alpha)$, set
	\[
	\begin{aligned}
		\lambda_i&\coloneq2^{\card{\sset{h\in I^+(\alpha)}{h>i}}},\\
		b_i&\coloneq\bigcard{\sset{h\in I^-(\alpha)\setminus\{1\}}{i\in A_h}}.
	\end{aligned}
	\]
	Consider the integral divisor
	\[
		H\coloneq-\biggl(\sum_{i\in I^+(\alpha)}\lambda_i\biggr)E_1
		-\sum_{i\in I^+(\alpha)}(\lambda_i+b_i)E_i-\sum_{i\in I^-(\alpha)\setminus\{1\}}E_i.
	\]
	By construction, $H\cdot C=0$ and $H\cdot\ell_i=1$ for every $i\in I^-(\alpha)\setminus\{1\}$.
	For every other wall curve $C'$ spanning an extremal ray, \eqref{eq:realizationentries} gives
	\[
		H\cdot C'=-\biggl(\sum_{i\in I^+(\alpha)}\lambda_i\biggr)C'_1-\sum_{i\in I^+(\alpha)}\lambda_i C'_i.
	\]
	If the source of $C'$ is $1$, then this equals
	\[
		\sum_{\substack{i\in I^+(\alpha)\\C'_i=0}}\lambda_i\ge0.
	\]
	Equality holds if and only if $C'_i=1$ for every $i\in I^+(\alpha)$, in which case \eqref{eq:realizationentries} gives $[C']=[C]$.
	If the source is $i\in I^+(\alpha)$, then $C'_i=-1$, all preceding entries vanish, and all later entries are at most $1$ by \cref{lem:wallcurves,lem:vennlocal}.
	Therefore
	\[
		H\cdot C'\ge\lambda_i-\sum_{\substack{h\in I^+(\alpha)\\h>i}}\lambda_h=1.
	\]
	Thus $H$ is nonnegative on every extremal ray, and any extremal ray on which it vanishes must equal $R$.
	Since $\NEb(\tilde X/X)$ is finite rational polyhedral by \cref{prop:coordinate}, it follows that $H$ is $\pi$-nef and
	\[
		H^\perp\cap\NEb(\tilde X/X)=\RRp[C].
	\]
	Hence $R$ is an extremal ray, completing the proof.
\end{proof}

For three centers, the numerical conditions give the following list.

\begin{prop}\label{prop:eight}
	In \cref{set:main}, assume that $n=3$.
	\begin{enumerate}
		\item\label{item:eightlist}
			The intersection vector of every elementary curve is one of the following eight vectors, grouped here by source:
			\begin{align*}
				\text{source }1:\quad&(-1,0,0),\quad(-1,1,0),\quad(-1,0,1),\quad(-1,1,1),\quad(-1,1,-1),\\
				\text{source }2:\quad&(0,-1,0),\quad(0,-1,1),\\
				\text{source }3:\quad&(0,0,-1).
			\end{align*}
		\item\label{item:eightgen}
			The cone $\NEb(\tilde X/X)$ is generated by those vectors in \cref{item:eightlist} that occur as intersection vectors of elementary curves on $\tilde X$.
	\end{enumerate}
\end{prop}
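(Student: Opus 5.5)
Part (1) is a direct enumeration from \cref{def:elementary}, organized by the source $i_0\in\{1,2,3\}$. If $i_0=3$, the only vector is $(0,0,-1)$. If $i_0=2$, then $C_1=0$ and $C_2=-1$. For $i=3$ the lower bound is $-\bigl\lvert\sset{h<3}{C_h=1}\bigr\rvert=0$, since no earlier entry equals $1$. Hence $C_3\in\{0,1\}$, which gives $(0,-1,0)$ and $(0,-1,1)$.

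If $i_0=1$, then $C_1=-1$. For $i=2$ the set $\sset{h<2}{C_h=1}$ is empty, so $C_2\in\{0,1\}$. For $i=3$ the lower bound is $-1$ if $C_2=1$ and $0$ if $C_2=0$. Thus $C_2=0$ forces $C_3\in\{0,1\}$, while $C_2=1$ allows $C_3\in\{-1,0,1\}$. This yields exactly the five listed vectors. The main care needed is to note that the counted set uses entries equal to $1$ with index $h<i$, and that $C_{i_0}=-1$ is never counted.

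For part (2), invoke \cref{thm:cone}: $\NEb(\tilde X/X)$ is the cone generated by the classes of elementary curves on $\tilde X$. Since a class in $N_1(\tilde X/X)$ is identified with its intersection vector, each such class is one of the vectors in \cref{item:eightlist} that actually occurs on $\tilde X$. Conversely, every occurring vector is the class of an effective $\pi$-contracted curve, so it lies in the cone. Together these give the stated equality.

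There is no real obstacle here. The only step requiring attention is the correct bookkeeping of the lower bound in \cref{def:elementary} during the case analysis of part (1).
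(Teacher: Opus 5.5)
Your proposal is correct and follows the paper's proof: part (1) is a case enumeration by source using \cref{def:elementary}, where the paper's bound $-C_2\le C_3\le 1$ for source $1$ is exactly your bookkeeping. Part (2) then follows directly from \cref{thm:cone} together with the identification of classes with intersection vectors.
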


\begin{proof}
	For source $1$, \cref{def:elementary} gives $C_2\in\{0,1\}$ and $-C_2\le C_3\le1$. For source $2$, it gives $C_3\in\{0,1\}$. For source $3$, the only possible vector is $(0,0,-1)$.
	This proves \cref{item:eightlist}, and \cref{item:eightgen} follows from \cref{thm:cone}.
\end{proof}

By \cref{prop:realization}, each vector in \cref{prop:eight} spans an extremal ray for a suitable choice of $X$ and centers $Z_1,Z_2,Z_3$, which may depend on the vector.

The following example shows that elementary curves need not generate the relative cone of curves without the simple normal crossings hypothesis.

\begin{exa}\label{exa:sharp}
	Let $X=\AA^3$ and $Z_1=\{0\}$.
	Choose five distinct lines $Z_2,\dotsc,Z_6$ through the origin such that no three of the corresponding points
	\[
		q_2,\dotsc,q_6\in E_1^{(1)}\isom\PP^2
	\]
	are collinear.
	Let
	\[
		\pi\colon\tilde X=X^{(6)}\longrightarrow X
	\]
	be the successive blowup along $Z_1,\dotsc,Z_6$.

	The first center has codimension three, and the remaining centers have codimension two.
	Every center $Z_i^{(i-1)}$ is smooth, and $Z_h\not\supseteq Z_i$ for $h<i$.
	Thus all the assumptions of \cref{set:main} other than simple normal crossings are satisfied.
	The arrangement does not have simple normal crossings at the origin.

	After the first blowup, the strict transforms $Z_2^{(1)},\dotsc,Z_6^{(1)}$ are pairwise disjoint and meet $E_1^{(1)}$ at $q_2,\dotsc,q_6$, respectively.
	The subsequent blowups therefore induce on $E_1^{(1)}$ the blowups at these five points.
	Hence
	\[
		E_1\isom\Bl_{q_2,\dotsc,q_6}\PP^2.
	\]

	There is a unique conic through $q_2,\dotsc,q_6$, which is irreducible and smooth.
	Let $C\subseteq E_1$ be its strict transform.
	Then $C$ is $\pi$-contracted and has intersection vector
	\[
		(E_1\cdot C,\dotsc,E_6\cdot C)=(-2,1,1,1,1,1).
	\]

	Let $C'$ be an elementary curve.
	If its source is $1$, then $C'$ is the strict transform of a line in $E_1^{(1)}$.
	Since no three of the points $q_i$ are collinear, its intersection vector is
	\[
		(-1,C'_2,\dotsc,C'_6),
		\qquad
		C'_i\in\{0,1\},
		\qquad
		\sum_{i=2}^6C'_i\le2.
	\]
	If its source is $i>1$, then $C'$ is the strict transform of a fiber of the $i$-th blowup.
	Since the later centers are disjoint from this fiber, the only nonzero entry of its intersection vector is $-1$ in position $i$.

	It follows that
	\[
		D\coloneq2E_1+E_2+\dotsb+E_6
	\]
	has nonpositive degree on every elementary curve class.
	On the other hand, $D\cdot C=1$.
	Therefore $[C]$ does not belong to the cone generated by the elementary classes, and these classes do not generate $\NEb(\tilde X/X)$.
\end{exa}

\section{Contractions}\label{sec:contractions}

In this section, we study the contractions of the faces of $\NEb(\tilde X/X)$: their existence (\cref{sec:existence}), the exceptional loci of extremal ray contractions (\cref{sec:types}), the targets of face contractions (\cref{sec:target}), and flips of small extremal ray contractions (\cref{sec:flips}).

\subsection{Relative base-point-freeness and contractions of faces}\label{sec:existence}

We prove \cref{introthm:contract}: every $\pi$-nef Cartier divisor $D$ on $\tilde X$ is $\pi$-base-point-free.
It follows that every face of $\NEb(\tilde X/X)$ admits a contraction over $X$.

The key local input is the following base-point-free statement for toric morphisms.

\begin{prop}[Fujino--Sato {\cite{FujinoSato}}]\label{prop:toricbpf}
	Let $\Sigma$ be a smooth fan in $N_\RR$ whose support $\sigma_0\coloneq\abs\Sigma$ is a full-dimensional strongly convex cone, and let $\pi\colon X_\Sigma\to X_{\sigma_0}$ be the induced proper toric morphism, as in \cref{ss:conegeneration}.
	Let $D$ be a Cartier divisor on $X_\Sigma$.
	If $D$ is $\pi$-nef, then $D$ is $\pi$-base-point-free.
\end{prop}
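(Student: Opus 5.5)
This is the relative toric base-point-free theorem of Fujino--Sato, and the plan is to reduce it to the classical support-function argument. Since $X_{\sigma_0}=U_{\sigma_0}$ is affine, $D$ is $\pi$-base-point-free if and only if $\Ob{X_\Sigma}(D)$ is globally generated on $X_\Sigma$. Because $\Sigma$ is smooth, every Weil divisor is Cartier and linearly equivalent to a torus-invariant one, so we may assume $D=\sum_{\rho}a_\rho D_\rho$. Such a divisor has a support function $\psi_D\colon\abs{\Sigma}\to\RR$, linear on each cone, given on each maximal cone $\sigma\in\Sigma(d)$ by a unique $m_\sigma\in M$ with $\innprod{m_\sigma}{u_\rho}=-a_\rho$ for $\rho\in\sigma(1)$. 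Global sections are spanned by the characters $\chi^m$ with $\innprod{m}{u_\rho}\ge-a_\rho$ for all $\rho$. The local generator of $\Ob{}(D)$ on $U_\sigma$ is $\chi^{m_\sigma}$. Hence global generation is equivalent to: for every maximal $\sigma$, the character $m_\sigma$ satisfies $\innprod{m_\sigma}{u_\rho}\ge-a_\rho$ for all rays $\rho$. In words, $\psi_D$ is convex (upper convex in the appropriate sign convention) on $\abs\Sigma=\sigma_0$.

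The key step is to translate $\pi$-nefness into this convexity. By \cref{thm:toriccone}~\cref{item:wallcurves}, $D$ is $\pi$-nef if and only if $D\cdot V(\tau)\ge0$ for every wall $\tau$. For a wall $\tau=\sigma_+\cap\sigma_-$, the toric intersection formula together with the wall relation \eqref{eq:wallrelation} gives
\[
	D\cdot V(\tau)=\innprod{m_{\sigma_+}-m_{\sigma_-}}{u_{\rho_-}}=\innprod{m_{\sigma_-}-m_{\sigma_+}}{u_{\rho_+}},
\]
up to the sign convention. So nefness says exactly that $\psi_D$ is convex across each wall. Note that the walls here are the interior codimension-one cones; codimension-one cones lying on the boundary of $\sigma_0$ impose no condition.

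Next, local convexity across walls must be promoted to global convexity on the convex set $\sigma_0$. Fix a maximal cone $\sigma$ and a ray $\rho$. Take a general point $x\in\relint\sigma$ and a general point $y$ near $u_\rho$, chosen so that the segment $[x,y]\subseteq\sigma_0$ avoids all cones of codimension at least two. This is possible because $\sigma_0$ is convex and full-dimensional, and those cones have codimension at least two. Along the segment, the piecewise-linear function $\psi_D-m_\sigma$ vanishes near $x$. Each wall crossing only increases its slope in the correct direction, so $\innprod{m_\sigma}{\cdot}$ lies on the correct side of $\psi_D$ at $y$. Letting $y\to u_\rho$ gives $\innprod{m_\sigma}{u_\rho}\ge-a_\rho$. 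This is the classical argument, as in Cox--Little--Schenck, Theorem~6.1.7 and its relative version. It uses only the convexity of the support, with no completeness assumption, and it is characteristic-free.

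The main obstacle I expect is this global step: one must ensure that generic segments stay inside the support and cross only walls, where each wall lies in exactly two maximal cones. Both points follow from full-dimensionality and convexity of $\sigma_0$, and from the fact that a codimension-one cone meeting $\relint\sigma_0$ is a wall. Alternatively, one can simply cite \cite{FujinoSato}, where this is proved for proper toric morphisms in arbitrary characteristic.
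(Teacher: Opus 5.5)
Your proposal is correct, but it takes a different route from the paper. The paper's proof reduces to a torus-invariant $D$ and then cites \cite[Proposition~4.6]{FujinoSato}. You instead reprove the statement directly by the classical support-function argument, essentially the Cox--Little--Schenck proof for fans with convex, full-dimensional support.

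Your chain of reasoning is sound:
\begin{enumerate}
\item Relative base-point-freeness over the affine base $U_{\sigma_0}$ is the same as global generation.
\item Global generation follows once every $m_\sigma$ with $\sigma$ maximal satisfies all the inequalities $\innprod{m_\sigma}{u_\rho}\ge-a_\rho$.
\item The formula $D\cdot V(\tau)=\innprod{m_{\sigma_+}-m_{\sigma_-}}{u_{\rho_-}}$ is right.
\item Generic segments promote local concavity across walls to the global inequality. The signs work out: if a segment with direction $v=\lambda u_{\rho_-}+w$, where $w\in\operatorname{span}\tau$ and $\lambda>0$, crosses $\tau$ from $\sigma_+$ into $\sigma_-$, then the slope of $\psi_D-m_\sigma$ changes by $-\lambda\,D\cdot V(\tau)\le0$. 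Since the slope starts at $0$ inside $\sigma$, the function stays $\le0$ along the segment.
\end{enumerate}

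Three small points.
\begin{itemize}
\item You only use the trivial implication that nef divisors are nonnegative on wall curves, since wall curves are $\pi$-contracted. So \cref{thm:toriccone} is not actually needed.
\item Take $y$ in the interior of $\sigma_0$, or simply note that the open segment from an interior point is interior. Then every crossing occurs at an interior point of $\sigma_0$, where a codimension-one cone lies in exactly two maximal cones.
\item You implicitly use that every maximal cone of $\Sigma$ is full-dimensional. This also follows from convexity and full-dimensionality of $\abs\Sigma$.
\end{itemize}

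What your route buys is a self-contained, visibly characteristic-free proof. It uses exactly the hypotheses at hand (affine base, convex full-dimensional support) and only standard toric facts. What the paper's citation buys is brevity and generality, since Fujino--Sato treat arbitrary proper toric morphisms.
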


\begin{proof}
	Replacing $D$ by a linearly equivalent divisor, we may assume that $D$ is torus-invariant.
	The assertion then follows from \cite[Proposition~4.6]{FujinoSato}.
\end{proof}

Relative nefness passes to the coordinate models.

\begin{lem}\label{lem:transport}
	In \cref{set:main}, let $D=-\sum_ia_iE_i$ be a $\pi$-nef $\RR$-divisor.
	Let $p\in\bigcup_iZ_i$ be a closed point, and let $\pi_{Y_p}\colon\tilde Y_p\to\AA^d$ be a coordinate model at $p$ with exceptional divisors $F_1,\dotsc,F_n$.
	Then the divisor $D_Y\coloneq-\sum_ia_iF_i$ on $\tilde Y_p$ is $\pi_{Y_p}$-nef.
\end{lem}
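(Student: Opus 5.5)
To prove that $D_Y$ is $\pi_{Y_p}$-nef, I will show that $D_Y\cdot\gamma\ge0$ for every class $\gamma\in\NEb(\tilde Y_p/\AA^d)$, and transfer this to $\tilde X$ through the injection $\iota_p$ of \cref{lem:NEinj}. By \cref{prop:coordinate}, after discarding the empty stages, $\NEb(\tilde Y_p/\AA^d)$ is generated by classes of finitely many curves $C_Y\subseteq\pi_{Y_p}^{-1}(0)$. It is therefore enough to check $D_Y\cdot C_Y\ge0$ for every curve $C_Y$ lying in the central fiber.

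Fix such a curve $C_Y$. The fiber isomorphism $\pi^{-1}(p)\isomarrow\pi_{Y_p}^{-1}(0)$ of \cref{prop:localmodel}~\cref{item:fiberproduct} gives a curve $C\subseteq\pi^{-1}(p)\subseteq\tilde X$ mapping isomorphically onto $C_Y$. This curve is $\pi$-contracted. By \cref{prop:localmodel}~\cref{item:exceptionaldivisors}, together with the projection formula for the closed embedding $C\subseteq\tilde U_p$, we get $E_i\cdot C=F_i\cdot C_Y$ for every $i$. This holds also for the empty stages, where both sides vanish: $F_i=0$, and $E_i|_{\tilde U_p}=0$. Hence
\[
	D_Y\cdot C_Y=-\sum_ia_iF_i\cdot C_Y=-\sum_ia_iE_i\cdot C=D\cdot C\ge0,
\]
by the $\pi$-nefness of $D$.

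Nonnegativity on the generators then extends to the whole cone by linearity and closedness. Equivalently, under the identification of $N_1(\tilde Y_p/\AA^d)$ with a subspace of $N_1(\tilde X/X)$ via intersection vectors, we have $D_Y\cdot\gamma=D\cdot\iota_p(\gamma)\ge0$ for all $\gamma\in\NEb(\tilde Y_p/\AA^d)$.

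The only point needing care is the surjectivity step: every class in $\NEb(\tilde Y_p/\AA^d)$ must come from curves in the central fiber. This is exactly where \cref{prop:coordinate} is used, since without it a curve on $\tilde Y_p$ over a point of $\AA^d$ outside $\phi_p(U_p)$ would not obviously correspond to any curve on $\tilde X$. With that result available, the rest is the bookkeeping of intersection vectors recorded in \cref{prop:localmodel}.
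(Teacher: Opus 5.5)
Your proof is correct and matches the paper's argument: you use \cref{prop:coordinate} to reduce to curves $C_Y$ in the central fiber $\pi_{Y_p}^{-1}(0)$. Via the fiber isomorphism of \cref{prop:localmodel}, each such $C_Y$ corresponds to a $\pi$-contracted curve $C\subseteq\tilde X$ with the same intersection vector, so $D_Y\cdot C_Y=D\cdot C\ge0$.
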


\begin{proof}
	By \cref{prop:coordinate}, the cone $\NEb(\tilde Y_p/\AA^d)$ is generated by classes of curves in the central fiber $\pi_{Y_p}^{-1}(0)$, so it suffices to prove $D_Y\cdot C_Y\ge0$ for every such curve $C_Y$.
	Under the isomorphism $\pi^{-1}(p)\isomarrow\pi_{Y_p}^{-1}(0)$ of \cref{prop:localmodel}, the curve $C_Y$ corresponds to a $\pi$-contracted curve $C\subseteq\tilde X$ with the same intersection vector, so
	\[
		D_Y\cdot C_Y=D\cdot C\ge0,
	\]
	since $D$ is $\pi$-nef.
\end{proof}

We now globalize the toric statement.

\begin{thm}\label{thm:bpf}
	In \cref{set:main}, let $D$ be a $\pi$-nef Cartier divisor on $\tilde X$.
	Then $D$ is $\pi$-base-point-free.
\end{thm}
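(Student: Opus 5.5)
The plan is to reduce to the exceptional part of $D$, check base-point-freeness locally over $X$, and import it from the toric case through the coordinate models.

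\emph{Reduction to exceptional divisors.} By \cref{lem:divisordecomposition}, write $D=\pi^*D_0-\sum_ia_iE_i$ with $D_0$ Cartier on $X$. Base-point-freeness relative to $\pi$ is local on $X$ and insensitive to twisting by pullbacks of Cartier divisors from $X$. So it suffices to show that $D'\coloneq-\sum_ia_iE_i$, which is $\pi$-nef since $\pi^*D_0$ is numerically trivial over $X$, is $\pi$-base-point-free. Concretely, we must show that the natural map $\pi^*\pi_*\Ob{\tilde X}(D')\to\Ob{\tilde X}(D')$ is surjective. Over $X\setminus\bigcup_iZ_i$ the map $\pi$ is an isomorphism and $D'$ is trivial there, so surjectivity holds. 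It remains to check surjectivity along $\pi^{-1}(p)$ for each closed point $p\in\bigcup_iZ_i$.

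\emph{Passage to the toric model.} Fix such a $p$ and a coordinate model $\phi_p\colon U_p\to\AA^d$, and set $D_Y\coloneq-\sum_ia_iF_i$. By \cref{lem:transport}, $D_Y$ is $\pi_{Y_p}$-nef. After omitting empty stages, $\tilde Y_p=X_{\tilde\Sigma}$ with $\tilde\Sigma$ smooth and supported on the positive orthant (\cref{lem:validstar}, \cref{prop:coordinate}), and $D_Y$ is Cartier. Hence \cref{prop:toricbpf} shows that $D_Y$ is $\pi_{Y_p}$-base-point-free, that is, $\pi_{Y_p}^*\pi_{Y_p*}\Ob{}(D_Y)\to\Ob{}(D_Y)$ is surjective.

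\emph{Flat base change.} By \cref{prop:localmodel}, the square formed by $\tilde\phi_p$ and $\phi_p$ is cartesian with $\phi_p$ étale, hence flat, and $\tilde\phi_p^*\Ob{}(D_Y)=\Ob{\tilde U_p}(D')$. Flat base change gives $\phi_p^*\pi_{Y_p*}\Ob{}(D_Y)\isom\pi_*\Ob{\tilde U_p}(D')$. Pulling back the surjection above along $\tilde\phi_p$ then identifies it with $\pi^*\pi_*\Ob{}(D')\to\Ob{}(D')$ on $\tilde U_p$, which is therefore surjective. Since the open sets $U_p$, together with $X\setminus\bigcup_iZ_i$, cover $X$, this proves the claim.

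\emph{Main obstacle.} The step needing the most care is the compatibility in the flat base change: the pullback of the evaluation map for $D_Y$ must agree with the evaluation map for $D'$ under the base change isomorphism. This is standard for quasi-coherent pushforward along a proper map and a flat base. A smaller point is to make sure the toric input applies even when the coordinate model has fewer than $n$ nontrivial stages; this is handled by dropping the empty stages, for which $F_i=0$.
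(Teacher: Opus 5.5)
Your proposal is correct and follows essentially the same route as the paper. You reduce to $-\sum_i a_iE_i$ via \cref{lem:divisordecomposition}, cover $X$ by $X\setminus\bigcup_iZ_i$ and the coordinate neighborhoods $U_p$, transport nefness with \cref{lem:transport}, apply \cref{prop:toricbpf}, and pull back along the cartesian square of \cref{prop:localmodel}. Your explicit flat base change step only spells out what the paper states as ``relative base-point-freeness is preserved under base change'', which in fact holds for arbitrary base change, so flatness is not even needed.
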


\begin{proof}
	By \cref{lem:divisordecomposition}, we may write $D=\pi^*D_0-\sum_ia_iE_i$, where $D_0$ is a Cartier divisor on $X$ and $a_i\in\ZZ$.
	Subtracting $\pi^*D_0$ preserves relative nefness and relative base-point-freeness.
	Thus we may assume that $D=-\sum_ia_iE_i$.

	Relative base-point-freeness is local on the base $X$.
	Cover $X$ by the open set $X_0\coloneq X\setminus\bigcup_iZ_i$ together with the coordinate neighborhoods $U_p$ for the points $p\in\bigcup_iZ_i$.

	Over $X_0$, the morphism $\pi$ is an isomorphism and every $E_i$ is disjoint from $\tilde X_0\coloneq\pi^{-1}(X_0)$, so $\restrdiv D{\tilde X_0}=0$.
	In particular, the restriction is base-point-free over $X_0$.

	Over a coordinate neighborhood $U_p$, \cref{prop:localmodel} gives $\restrdiv D{\tilde U_p}=\tilde\phi_p^*D_Y$, where $D_Y\coloneq-\sum_ia_iF_i$ and the $F_i$ are the exceptional divisors of $\pi_{Y_p}$.
	By \cref{lem:transport}, the divisor $D_Y$ is $\pi_{Y_p}$-nef, so it is $\pi_{Y_p}$-base-point-free by \cref{prop:toricbpf}.
	Relative base-point-freeness is preserved under base change, so $\restrdiv D{\tilde U_p}$ is base-point-free over $U_p$.
	Thus $D$ is $\pi$-base-point-free over each member of the cover, hence over $X$.
\end{proof}

\begin{cor}\label{cor:contract}
	In \cref{set:main}, let $F\subseteq\NEb(\tilde X/X)$ be a face.
	Then the contraction
	\[
		g_F\colon\tilde X\longrightarrow\tilde X_F
	\]
	of $F$ over $X$ exists. Moreover, the sequence
	\[
	\begin{tikzcd}
		0\arrow[r]
			& N^1(\tilde X_F/X)\arrow[r,"g_F^*"]
			& N^1(\tilde X/X)\arrow[r]
			& N^1(\tilde X/\tilde X_F)\arrow[r]
			& 0
	\end{tikzcd}
	\]
	is exact. In particular, $\rho(\tilde X/X)=\rho(\tilde X_F/X)+\dim F$.
\end{cor}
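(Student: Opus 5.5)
The plan is to build $g_F$ from a supporting divisor, using \cref{thm:bpf}, and then to read off the exact sequence from the Stein factorization.

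\textbf{Step 1: an integral supporting divisor.} By \cref{thm:cone}, $\NEb(\tilde X/X)$ is a rational polyhedral cone. It is generated by finitely many integral intersection vectors in $N_1(\tilde X/X)\isom\RR^n$, and the dual lattice is spanned by the classes of $E_1,\dotsc,E_n$. Hence every face $F$ has an integral supporting divisor $H=-\sum_i a_iE_i$ with $a_i\in\ZZ$. To get it, sum integral linear forms that are nonnegative on the cone and vanish on $F$, namely the facet normals containing $F$. Thus $H$ is a $\pi$-nef Cartier divisor with $H^\perp\cap\NEb(\tilde X/X)=F$.

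\textbf{Step 2: construct $g_F$.} By \cref{thm:bpf}, $H$ is $\pi$-base-point-free. So $\pi_*\Ob{\tilde X}(H)$ is coherent and the natural map
\[
	\pi^*\pi_*\Ob{\tilde X}(H)\to\Ob{\tilde X}(H)
\]
is surjective. This gives a morphism $\tilde X\to\PP_X(\pi_*\Ob{\tilde X}(H))$ over $X$. Let $g_F\colon\tilde X\to\tilde X_F$ be its Stein factorization. Then $(g_F)_*\Ob{\tilde X}=\Ob{\tilde X_F}$, $g_F$ is projective and surjective, and $\tilde X_F$ is a normal variety projective over $X$. Moreover $H=g_F^*A$ for an ample line bundle $A$ relative to $X$, namely the pullback of $\Ob{}(1)$ to the finite cover. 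Consequently a $\pi$-contracted curve $C$ is contracted by $g_F$ iff $H\cdot C=0$ iff $[C]\in F$. So $g_F$ is the contraction of $F$.

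\textbf{Step 3: the exact sequence.} The map $g_F^*$ is injective. Indeed, if $g_F^*L\equiv0$ over $X$, then for any curve $B\subseteq\tilde X_F$ contracted to a point of $X$ we lift $B$ to a curve $C\subseteq\tilde X$ with $g_F(C)=B$; this is possible since $g_F$ is surjective and proper, taking a component of a curve section of the preimage. Then $L\cdot B=\deg(C/B)^{-1}\,g_F^*L\cdot C=0$. Surjectivity of $N^1(\tilde X/X)\to N^1(\tilde X/\tilde X_F)$ holds because both are quotients by numerical equivalence of the same divisors, tested on the smaller class of $g_F$-contracted curves.

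For exactness in the middle, let $D$ be $\RR$-Cartier on $\tilde X$ with $D\cdot C=0$ for all $g_F$-contracted curves, i.e.\ $D$ vanishes on $F$; we must show $D\equiv_X g_F^*L$. Since $N_1$ has a rational structure given by the $E_i$, we may reduce to $D$ Cartier. For $m\gg0$, $mH\pm D$ is positive on $\NEb(\tilde X/X)\setminus F$ near the generators and zero on $F$, so by polyhedrality it is $\pi$-nef. By \cref{thm:bpf} it is $\pi$-base-point-free, and it is trivial on the fibers of $g_F$. The standard argument of \cite[Theorem~3.7(4)]{KollarMori} then gives $mH+D=g_F^*L_1$: a base-point-free divisor that is numerically trivial on the connected fibers of $g_F$ satisfying $(g_F)_*\Ob{}=\Ob{}$ descends. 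We then get $D=g_F^*(L_1-mA)$.

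The dimension count follows because $N^1(\tilde X/\tilde X_F)$ is dual to the span of $F$, which has dimension $\dim F$. This uses that $N_1(\tilde X/\tilde X_F)$ is spanned by classes in $F$ and that $F$ spans its own image.

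\textbf{Main obstacle.} The hardest step is the descent in Step 3. It requires $\pi$-freeness of $mH+D$, not just nefness, and this is exactly what \cref{thm:bpf} supplies in arbitrary characteristic. One must also check that numerically trivial on the $g_F$-fibers implies the descended line bundle exists. Here the approach is to use the base-point-free morphism of $mH+D$, which contracts every curve contracted by $g_F$ and hence factors through $g_F$ by rigidity, using $(g_F)_*\Ob{}=\Ob{}$.
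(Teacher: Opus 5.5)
Your proof is correct and follows the paper's argument. You take an integral supporting divisor from \cref{thm:cone} and the Stein factorization of the morphism it defines via \cref{thm:bpf}. For middle exactness you observe that $mH+D$ is again a supporting divisor of $F$, hence $\pi$-base-point-free, so both it and $mH$ descend to $\tilde X_F$. You also spell out details the paper leaves implicit, namely the curve-lifting argument for injectivity of $g_F^*$ and the rigidity argument for descent.
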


\begin{proof}
	By \cref{thm:cone}, the face $F$ has an integral supporting divisor $D_F$.
	The divisor $D_F$ is $\pi$-base-point-free by \cref{thm:bpf}.
	The Stein factorization of the morphism defined by $D_F$ is therefore the contraction $g_F$ of $F$.

	We prove the exactness.
	The pullback $g_F^*$ is injective, while the last arrow is surjective with kernel $F^\perp$.
	Since $F$ is rational polyhedral, it is enough to show that every integral class in $F^\perp$ lies in the image of $g_F^*$.
	Let $D\in\bigoplus_i\ZZ E_i$ be a divisor whose class lies in $F^\perp$.
	For $m\gg0$, the divisor $mD_F+D$ is again a supporting divisor of $F$, and hence is $\pi$-base-point-free by \cref{thm:bpf}.
	The morphisms defined by $D_F$ and $mD_F+D$ both have $g_F$ as their Stein factorization, so both divisors descend to $\tilde X_F$.
	Hence $D=(mD_F+D)-mD_F$ also descends, which proves the exactness.
	The equality of relative Picard numbers follows from $\codim F^\perp=\dim F$.
\end{proof}

Finally, we compare the contraction of a face with the toric contraction of the corresponding face of a local coordinate model.

\begin{lem}\label{lem:localcontract}
	In \cref{set:main}, let $F\subseteq\NEb(\tilde X/X)$ be a face with contraction $g_F\colon\tilde X\to\tilde X_F$ (\cref{cor:contract}).
	Let $p\in\bigcup_iZ_i$ be a closed point, let $\pi_{Y_p}\colon\tilde Y_p\to\AA^d$ together with $\phi_p\colon U_p\to\AA^d$ be a coordinate model at $p$, and let
	\[
		\iota_p\colon\NEb(\tilde Y_p/\AA^d)\longhookrightarrow\NEb(\tilde X/X)
	\]
	be the injection of \cref{lem:NEinj}.
	The contraction $g_{F_p}\colon\tilde Y_p\to\tilde Y_{p,F}$ of the face $F_p\coloneq\iota_p^{-1}(F)$ fits into the following commutative diagram, in which all four squares are cartesian:
	\[
	\begin{tikzcd}
		\tilde X\arrow[d,"g_F"']
			& \tilde U_p\arrow[r,"\tilde\phi_p"]\arrow[d]\arrow[l,hook']
			& \tilde Y_p\arrow[d,"g_{F_p}"]\\
		\tilde X_F\arrow[d]
			& \tilde U_{p,F}\arrow[r,"\tilde\phi_{p,F}"]\arrow[d]\arrow[l,hook']
			& \tilde Y_{p,F}\arrow[d]\\
		X
			& U_p\arrow[r,"\phi_p"']\arrow[l,hook']
			& \AA^d
	\end{tikzcd}
	\]
	Here all horizontal arrows are \'etale.
	Moreover,
	\[
		\Exc(g_F)\cap\tilde U_p=\tilde\phi_p^{-1}(\Exc(g_{F_p})).
	\]
	In particular, if $R\coloneq F$ is an extremal ray and $p\in\pi(\Exc(g_R))$, then $R_p\coloneq F_p$ is an extremal ray with $\iota_p(R_p)=R$.
\end{lem}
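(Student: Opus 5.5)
The plan is to realize both contractions as relative $\Proj$'s of section rings of matching supporting divisors. Then flat (étale) base change identifies them, and the statement about exceptional loci follows by comparing fibers.

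\emph{Step 1: matching supporting divisors.} By \cref{thm:cone} and \cref{lem:divisordecomposition}, the face $F$ has an integral supporting divisor of the form $D=-\sum_ia_iE_i$; subtracting $\pi^*D_0$ does not change the face it supports. Set $D_Y\coloneq-\sum_ia_iF_i$ on $\tilde Y_p$. By \cref{lem:transport}, $D_Y$ is $\pi_{Y_p}$-nef. By \cref{prop:localmodel}~\cref{item:exceptionaldivisors} and the diagram of \cref{lem:NEinj}, we have $D_Y\cdot\beta=D\cdot\iota_p(\beta)$ for every $\beta\in\NEb(\tilde Y_p/\AA^d)$. Hence $D_Y^\perp\cap\NEb(\tilde Y_p/\AA^d)=\iota_p^{-1}(F)=F_p$, so $F_p$ is a face with supporting divisor $D_Y$. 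Since $D$ is $\pi$-base-point-free (\cref{thm:bpf}) and $g_F$ is the Stein factorization of the morphism it defines, $D=g_F^*A$ for some Cartier divisor $A$ that is ample over $X$. Using $(g_F)_*\Ob{\tilde X}=\Ob{\tilde X_F}$ and the projection formula, this gives
\[
	\tilde X_F\isom\Proj_X\bigoplus_{m\ge0}\pi_*\Ob{\tilde X}(mD).
\]
The same argument, with \cref{prop:toricbpf}, gives $\tilde Y_{p,F}\isom\Proj_{\AA^d}\bigoplus_m(\pi_{Y_p})_*\Ob{\tilde Y_p}(mD_Y)$.

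\emph{Step 2: cartesian squares.} Put $\tilde U_{p,F}\coloneq\tilde X_F\times_XU_p$, an open subset of $\tilde X_F$. The two left squares are then cartesian, being preimages of an open set. For the lower right square, use flat base change along the étale map $\phi_p$ together with the cartesian square of \cref{prop:localmodel}~\cref{item:fiberproduct} and $\restrdiv{D}{\tilde U_p}=\tilde\phi_p^*D_Y$. These give
\[
	\restr{\pi_*\Ob{\tilde X}(mD)}{U_p}\isom\phi_p^*(\pi_{Y_p})_*\Ob{\tilde Y_p}(mD_Y)
\]
compatibly with multiplication. Since $\Proj$ commutes with base change, $\tilde U_{p,F}\isom\tilde Y_{p,F}\times_{\AA^d}U_p$, which defines $\tilde\phi_{p,F}$. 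The maps induced on $\Proj$ by the natural maps $\bigoplus_m\pi_*\Ob{}(mD)\to\bigoplus_m\Ob{}(mD)$ are exactly $g_F$ and $g_{F_p}$, so the upper right square commutes. That square is cartesian by the pasting lemma: the outer rectangle and the lower square are cartesian. All horizontal maps on the right are base changes of $\phi_p$, hence étale, and the left ones are open immersions.

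\emph{Step 3: exceptional loci and rays.} The locus $\Exc$ of a proper birational morphism is the set of points whose fiber has positive dimension. In a cartesian square the fiber of $\restr{g_F}{\tilde U_p}$ over a closed point $y$ is isomorphic to the fiber of $g_{F_p}$ over $\tilde\phi_{p,F}(y)$, since residue fields are $k$. This gives $\Exc(g_F)\cap\tilde U_p=\tilde\phi_p^{-1}(\Exc(g_{F_p}))$. If $F=R$ is an extremal ray and $p\in\pi(\Exc(g_R))$, then this locus is nonempty, so $g_{R_p}$ contracts some curve and $R_p\ne0$. Since $\iota_p$ is the restriction of an injective linear map, $\iota_p^{-1}(R)$ is a nonzero face contained in a ray, hence a ray mapping onto $R$.

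I expect the main obstacle to be Step 1's identification of $g_F$ with the relative $\Proj$. This needs the descent $D=g_F^*A$ with $A$ relatively ample, which is what makes $\Proj$ and base change usable. The rest is formal.
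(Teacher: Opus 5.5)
Your proof is correct, but it builds the cartesian squares by a different mechanism than the paper.

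The paper never writes down section algebras. It defines $\tilde U_{p,F}$ as $\tilde Y_{p,F}\times_{\AA^d}U_p$ and notes that the base change $g'_p$ of $g_{F_p}$ is again a contraction, by flat base change. It then uses the fiber isomorphisms over closed points together with $\iota_p[C_Y]=[C]$ to show that $g'_p$ and $\restr{g_F}{\tilde U_p}$ contract exactly the same curves. Uniqueness of contractions then gives $\tilde U_{p,F}\isom\tilde X_F\times_XU_p$ compatibly with the maps from $\tilde U_p$.

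You go the other way round. You fix a supporting divisor $D=-\sum_ia_iE_i$ and check that $D_Y=-\sum_ia_iF_i$ supports $F_p$. You then write both targets as relative $\Proj$'s of section algebras, using $D=g_F^*A$ with $A$ ample over $X$. Flat base change of $\pi_*\Ob{\tilde X}(mD)$ along $\phi_p$ and compatibility of $\Proj$ with base change transport one target to the other, and the upper right square follows by pasting.

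What each route buys:
\begin{itemize}
\item The paper's route is shorter. It uses only the curve-theoretic characterization of a contraction, so it does not depend on which supporting divisor was used to construct $g_F$.
\item Your route makes the isomorphism explicit and avoids appealing to the uniqueness statement. It is essentially the same flat-base-change computation the paper later performs on the flip algebra in the proof of \cref{thm:flip}.
\end{itemize}

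For the exceptional loci, your fiber-dimension criterion is equivalent to the paper's remark that the isomorphism locus of a proper birational morphism to a normal variety commutes with \'etale base change. You should say explicitly that it rests on Zariski's main theorem. That theorem applies because $\tilde X_F$ and $\tilde Y_{p,F}$ are normal, since $(g_F)_*\Ob{\tilde X}=\Ob{\tilde X_F}$ with $\tilde X$ normal, and likewise for $g_{F_p}$.
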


\begin{proof}
	After omitting the empty stages, the coordinate model satisfies \cref{set:coordinate}, so the contraction $g_{F_p}$ exists by \cref{cor:contract}.
	Set
	\[
		\tilde U_{p,F}\coloneq\tilde Y_{p,F}\times_{\AA^d}U_p.
	\]
	By \cref{prop:localmodel}, base change of $g_{F_p}$ along $\phi_p$ gives a morphism $g'_p\colon\tilde U_p\to\tilde U_{p,F}$ making both right-hand squares cartesian.
	The morphism $g'_p$ is a contraction by flat base change, and both horizontal arrows in these squares are \'etale.

	We show that $g'_p$ is the restriction of $g_F$ over $U_p$.
	Let $C\subseteq\tilde U_p$ be a curve contracted over $U_p$, and let $C_Y\coloneq\tilde\phi_p(C)$ be its image in $\tilde Y_p$.
	Since $\tilde\phi_p$ identifies the fibers over $\pi(C)$ and $\phi_p(\pi(C))$, it maps $C$ isomorphically onto $C_Y$.
	Since $g'_p$ is the base change of $g_{F_p}$, it contracts $C$ if and only if $[C_Y]\in F_p$.
	By \cref{lem:NEinj}, we have $\iota_p[C_Y]=[C]$, so this condition is equivalent to $[C]\in F$.
	Thus $g'_p$ and the restriction of $g_F$ contract exactly the same curves, and we obtain an isomorphism
	\[
		\tilde U_{p,F}\isom\tilde X_F\times_XU_p
	\]
	over $U_p$, compatible with the morphisms from $\tilde U_p$.
	This gives the two left-hand cartesian squares, whose horizontal arrows are open immersions, and completes the diagram.
	The equality of exceptional loci follows because the isomorphism locus of a proper birational morphism to a normal variety commutes with \'etale base change.

	Finally, suppose that $R\coloneq F$ is an extremal ray and $p\in\pi(\Exc(g_R))$.
	The equality of exceptional loci gives $\Exc(g_{R_p})\ne\emptyset$, so $R_p\ne0$.
	Since $\iota_p$ is injective and $R_p=\iota_p^{-1}(R)$ is a face, $R_p$ is an extremal ray with $\iota_p(R_p)=R$.
\end{proof}

\subsection{Exceptional loci and contraction types}\label{sec:types}

In this subsection, we describe the exceptional locus of each extremal ray contraction and determine whether it is small or divisorial.

The toric computation uses the following result of Sato \cite{Sato}.

\begin{thm}\label{thm:sato}
	Let $X_\Sigma$ be a $\QQ$-factorial toric variety, let $f\colon X_\Sigma\to S$ be a projective toric morphism to an affine toric variety, and let $R$ be an extremal ray of $\NEb(X_\Sigma/S)$.
	Assume that its contraction $g_R\colon X_\Sigma\to (X_\Sigma)_R$ exists and is birational.
	Put
	\[
		\mathcal N(R)\coloneq\sset{\rho\in\Sigma(1)}{D_\rho\cdot R<0},\qquad
		\tau_R\coloneq\Cone(u_\rho\mid\rho\in\mathcal N(R)).
	\]
	Then $\tau_R$ is a cone of $\Sigma$, and
	\[
		\Exc(g_R)=V(\tau_R),\qquad
		\codim_{X_\Sigma}\Exc(g_R)=\card{\mathcal N(R)}.
	\]
\end{thm}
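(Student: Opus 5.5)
The plan is to reduce to Reid's description of extremal rays in toric Mori theory, working with wall curves and their wall relations. Since $X_\Sigma$ is $\QQ$-factorial, $\Sigma$ is simplicial, and wall curves generate $\NEb(X_\Sigma/S)$ as in \cref{thm:toriccone}. So the extremal ray $R$ contains $[V(\tau)]$ for some wall $\tau$. Write its wall relation as
\[
	b_+u_{\rho_+}+b_-u_{\rho_-}+\sum_{\rho\in\tau(1)}b_\rho u_\rho=0 ,
\]
with $b_\pm>0$ and rational coefficients. Up to a positive multiple, this relation computes $D_\rho\cdot R$ for every ray $\rho$.

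\textbf{Step 1: $\tau_R$ is a cone of $\Sigma$.} The two rays outside $\tau$ have positive coefficients, so every ray in $\mathcal N(R)$ lies in $\tau(1)$. Because $\tau$ is simplicial, $\tau_R$ is a face of $\tau$ and hence a cone of $\Sigma$. Note that $\mathcal N(R)$ depends only on $R$, not on the choice of wall.

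\textbf{Step 2: $\mathcal N(R)\neq\emptyset$ and the upper bound on $\Exc(g_R)$.} If all coefficients were nonnegative, then $\Cone(u_{\rho_+},u_{\rho_-},\tau)$ would contain a line. It would then span a non-strongly-convex cone in the fibre direction, so $V(\tau)$ would move in a covering family and $g_R$ would be of fibre type. This contradicts birationality, so $\mathcal N(R)\ne\emptyset$.

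Next, $\Exc(g_R)$ is a closed torus-invariant set covered by torus-invariant curves $C$ with $[C]\in R$. For $\rho\in\mathcal N(R)$ we have $D_\rho\cdot C<0$, which forces $C\subseteq D_\rho$. Hence
\[
	\Exc(g_R)\subseteq\bigcap_{\rho\in\mathcal N(R)}D_\rho=V(\tau_R),
\]
where the last equality holds because $\Sigma$ is simplicial and $\tau_R\in\Sigma$.

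\textbf{Step 3: the reverse inclusion $V(\tau_R)\subseteq\Exc(g_R)$.} I expect this to be the main obstacle. I would follow Reid and consider the toric variety $V(\tau_R)$, with fan $\mathrm{Star}(\tau_R)$ in $N/\operatorname{span}\tau_R$, together with the restriction of $g_R$ to it. Projecting the wall relation modulo $\operatorname{span}(\tau_R)$ gives a relation with nonnegative coefficients among the images of the rays in $\mathcal P(R)\coloneq\sset{\rho}{D_\rho\cdot R>0}$ and the zero-coefficient rays.

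The first thing I would try is to show that every wall $\tau'$ of $\Sigma$ containing $\tau_R$ and adjacent to a maximal cone containing $\tau_R\cup\mathcal P(R)$-type rays has $[V(\tau')]\in R$. For this I would use the standard fact that the wall relation is "the same" across such walls, since all of them are contracted by $g_R$ when $\rho(X_\Sigma/(X_\Sigma)_R)=1$. Then the image fan of $V(\tau_R)$ under $g_R$ is obtained by coarsening, and it contains the non-strongly-convex cone generated by the images of $\mathcal P(R)$. So $g_R|_{V(\tau_R)}$ has positive-dimensional fibres through every point, of dimension $\card{\mathcal P(R)}-1\ge1$, which gives $V(\tau_R)\subseteq\Exc(g_R)$.

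An alternative for this step uses that $-D_\rho$ is $g_R$-ample for $\rho\in\mathcal N(R)$, which holds because the relative Picard number is one. One can then argue that through every torus-fixed point of $V(\tau_R)$ there passes an invariant curve of class in $R$.

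\textbf{Step 4: the codimension formula.} Combining Steps 2 and 3 gives $\Exc(g_R)=V(\tau_R)$. Since $\tau_R$ is simplicial with exactly $\card{\mathcal N(R)}$ rays, its dimension is $\card{\mathcal N(R)}$. Therefore $\codim_{X_\Sigma}\Exc(g_R)=\dim\tau_R=\card{\mathcal N(R)}$.
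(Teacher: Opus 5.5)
\textbf{Steps 1, 2 and 4 are essentially sound, but Step 3 has a genuine gap.}

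One correction in Step~2: $\Exc(g_R)$ is not covered by torus-invariant curves, because a point of an orbit of dimension at least two lies on none. The inclusion $\Exc(g_R)\subseteq V(\tau_R)$ only needs some $g_R$-contracted curve through each point of $\Exc(g_R)$, and that exists by Zariski's main theorem.

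Step~3 is the real content of the theorem. The paper does not prove it either; it quotes \cite[Theorem~3.4 and Proposition~2.7]{Sato}. Your argument for it does not work, for three reasons.
\begin{itemize}
\item Your main route is circular. To show that the walls $\tau'\supseteq\tau_R$ satisfy $[V(\tau')]\in R$, you invoke that they ``are all contracted by $g_R$'', which is the same assertion.
\item The claim that the image fan ``contains the non-strongly-convex cone generated by the images of $\mathcal P(R)$'' has no meaning as stated, since cones of a fan are strongly convex. The fibre dimension $\card{\mathcal P(R)}-1$ is Reid's finer result, which you do not prove.
\item The alternative also falls short. Even if every torus-fixed point of $V(\tau_R)$ lay on a curve of class in $R$, a closed torus-invariant set can contain all fixed points of $V(\tau_R)$ without containing $V(\tau_R)$ (e.g.\ the boundary of $\PP^1\times\PP^1$).
\end{itemize}

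\textbf{The missing idea.} No wall other than $\tau$ is needed; what you must use is the fan $\Sigma'$ of $(X_\Sigma)_R$. Since $g_R$ is toric and birational, it is induced by the identity of $N$, and $\Sigma'$ is a coarsening of $\Sigma$.
\begin{itemize}
\item As $g_R(V(\tau))$ is a point, the smallest cone $\sigma'\in\Sigma'$ containing $\tau$ is full-dimensional.
\item $\sigma'$ contains $\sigma_+\cup\sigma_-$, because any cone of $\Sigma'$ containing $\sigma_\pm$ has $\sigma'$ as a face of full dimension. In particular, every ray with nonzero coefficient in the wall relation lies in $\sigma'$.
\item By minimality, the smallest cone $\sigma_Y\in\Sigma'$ containing $\tau_R$ is a face of $\sigma'$.
\end{itemize}
Now write the wall relation as
\[
	\sum_{\rho\in\mathcal P(R)}b_\rho u_\rho=\sum_{\rho\in\mathcal N(R)}(-b_\rho)u_\rho\in\tau_R\subseteq\sigma_Y .
\]
The left-hand side is a positive combination of vectors of $\sigma'$ lying in the face $\sigma_Y$, so $u_{\rho_+}\in\sigma_Y$. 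Since $\sigma_+$ is simplicial, $u_{\rho_+}\notin\operatorname{span}(\tau)\supseteq\operatorname{span}(\tau_R)$. Hence $\dim\sigma_Y>\dim\tau_R$.

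It follows that the surjection $O(\tau_R)\to O(\sigma_Y)$ induced by $g_R$ has fibres of positive dimension $\dim\sigma_Y-\dim\tau_R$. Thus $O(\tau_R)\subseteq\Exc(g_R)$, and since $\Exc(g_R)$ is closed, $V(\tau_R)\subseteq\Exc(g_R)$.

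The same strong convexity of $\sigma'$ also makes your Step~2 claim $\mathcal N(R)\ne\emptyset$ rigorous, replacing the ``covering family'' heuristic. A relation among vectors of $\sigma'$ with $b_\pm>0$ and all other coefficients nonnegative would put the line $\RR u_{\rho_+}$ inside $\sigma'$.

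With this repair you have a self-contained Reid-style proof, in place of the paper's citation of Sato.
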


\begin{proof}
	This is the birational case of \cite[Theorem~3.4]{Sato}, as described in \cite[\S3.6]{Sato}.
	By \cite[Proposition~2.7]{Sato}, the cone $w^0$ and the integer $m$ in that description are precisely $\tau_R$ and $\card{\mathcal N(R)}$, respectively.
\end{proof}

In \cref{set:main}, let $R\subseteq\NEb(\tilde X/X)$ be an extremal ray.
The \emph{source} of $R$ is the minimal $i_0\in\{1,\dotsc,n\}$ such that $E_{i_0}\cdot R\ne0$.
The positive and negative index sets of $R$ are
\begin{align*}
	I^+(R)&\coloneq\sset{i\in\{1,\dotsc,n\}}{E_i\cdot R>0},\\
	I^-(R)&\coloneq\sset{i\in\{1,\dotsc,n\}}{E_i\cdot R<0}.
\end{align*}
Set
\begin{equation}\label{def:realization}
	Z_R\coloneq\bigcap_{E_i\cdot R\ne0}Z_i\subseteq X.
\end{equation}
For a coordinate model at $p$ with coordinate sets $J_i$ for the centers containing $p$, put $I_j\coloneq\sset{i}{p\in Z_i,\ j\in J_i}$ for $1\le j\le d$.
Define
\[
	J^-_p(R)\coloneq\biggl\{j\in\{1,\dotsc,d\} \biggm| -\biggl(\sum_{i\in I_j}E_i\biggr)\cdot R<0\biggr\}.
\]
These are the indices of the strict transforms of coordinate hyperplanes having negative degree on $R$, by \cref{lem:D_e_j,prop:localmodel}.
In \cref{set:coordinate}, we replace $E_i$ by $F_i$ in this definition and omit the subscript $p$, writing $J^-(R)$.

\begin{lem}\label{lem:coordlocus}
	In \cref{set:coordinate}, let $R$ be an extremal ray of $\NEb(\tilde Y/\AA^d)$, and let $g_R\colon\tilde Y\to\tilde Y_R$ be its contraction.
	Then
	\[
		\Exc(g_R)=\bigcap_{i\in I^-(R)}F_i\cap\bigcap_{j\in J^-(R)}D_{e_j},\qquad
		\pi_Y(\Exc(g_R))=\bigcap_{F_i\cdot R\ne0}W_i.
	\]
	The intersection on the right-hand side of the first equality is transverse.
\end{lem}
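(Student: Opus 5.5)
We apply Sato's description (\cref{thm:sato}) directly to the toric model. By \cref{lem:validstar}, $\tilde Y=X_{\tilde\Sigma}$ is a smooth toric variety, projective over the affine toric variety $\AA^d$, and $F_i=D_{v_i}$. By \cref{cor:contract}, the contraction $g_R$ exists, and it is birational because $\pi_Y$ is. The rays of $\tilde\Sigma$ are exactly the $\rho_{e_j}$ and the $\rho_{v_i}$. By \cref{lem:D_e_j}, $D_{e_j}\cdot R<0$ if and only if $j\in J^-(R)$, while $D_{v_i}\cdot R<0$ if and only if $i\in I^-(R)$. Hence
\[
	\mathcal N(R)=\sset{\rho_{v_i}}{i\in I^-(R)}\cup\sset{\rho_{e_j}}{j\in J^-(R)}.
\]
\Cref{thm:sato} then says that $\tau_R$ is a cone of $\tilde\Sigma$ and that $\Exc(g_R)=V(\tau_R)$. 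In a smooth toric variety, $V(\tau_R)$ is the scheme-theoretic intersection of the divisors $D_\rho$ for $\rho\in\tau_R(1)$, and this intersection is transverse with codimension $\card{\mathcal N(R)}$. This gives the first equality together with transversality.

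For the image, $\pi_Y(V(\tau_R))=V(\bar\tau)$, where $\bar\tau$ is the smallest face of $\Cone(e_1,\dotsc,e_d)$ containing $\tau_R$. That face is $\Cone(e_j\mid j\in K)$, with $K$ the union of $J^-(R)$ and the supports $J_i$ of the $v_i$ for $i\in I^-(R)$. The target statement $\pi_Y(\Exc(g_R))=\bigcap_{F_i\cdot R\ne0}W_i$ is the coordinate subspace $V(x_j\mid j\in K')$ with
\[
	K'\coloneq\bigcup_{i\in I^+(R)\cup I^-(R)}J_i .
\]
So it remains to prove $K=K'$.

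The inclusion $K\subseteq K'$ holds because $J^-(R)\subseteq\bigcup_{i\in I^+(R)}J_i$. Indeed, if $\sum_{i\in I_j}F_i\cdot R>0$, then some $i\in I_j$ has $F_i\cdot R>0$.

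The reverse inclusion is the main obstacle. We need every $J_i$ with $i\in I^+(R)$, and also $J_{i_0}$, to lie in $K$. Note that $i_0\in I^-(R)$, since $C_{i_0}=-1$ by \cref{lem:wallcurves}.

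For $i\in I^+(R)$, we use that $\tau_R$ is a cone of $\tilde\Sigma$. Take a wall curve $C$ in $R$ and a primitive collection $P$ with $[C]=r(P)$ (\cref{lem:vennlocal}). The relation $r(P)$ reads
\[
	\sum_{\rho\in P}u_\rho=\sum_{\rho\in\gamma(P)(1)} c_\rho u_\rho .
\]
The rays with negative coefficient, counted after cancellation, are exactly $\mathcal N(R)$. By \cref{lem:sourcecoordinates}, the positive side consists of the $e_j$ with $j\in J^+(C)$ and the $v_i$ with $i\in I^+(R)$. Write everything in the basis $e_1,\dotsc,e_d$. Every vector involved has nonnegative coordinates, so the $e$-support of the left-hand side, namely $J^+(C)\cup\bigcup_{i\in I^+(R)}J_i$, equals the $e$-support of the right-hand side. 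The right-hand side involves only rays of $\mathcal N(R)$, together with any rays of $P\cap\gamma(P)(1)$ with $c_\rho\ge2$. Those latter rays have degree $1-c_\rho<0$, so they also lie in $\mathcal N(R)$. Hence the $e$-support of the right-hand side is contained in $K$. This shows $\bigcup_{i\in I^+(R)}J_i\subseteq K$ and $J^+(C)\subseteq K$. Combined with \cref{lem:sourcecoordinates}\cref{item:sourcecoordinates}, it also gives $J_{i_0}\subseteq K$.

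The remaining indices $i\in I^-(R)$ are already in $K$ by definition. This establishes $K=K'$ and hence the image formula. I expect the careful bookkeeping of supports in this relation to be the delicate point.
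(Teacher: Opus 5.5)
Your proof is correct. The first half is the paper's argument: you apply Sato's theorem with $\mathcal N(R)$ read off from $F_i=D_{v_i}$ and \cref{lem:D_e_j}, use smoothness of $\tilde\Sigma$ for transversality, take the smallest face of the orthant for the image, and prove $K\subseteq K'$ the same way.

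You diverge on the reverse inclusion $K'\subseteq K$, which you call the main obstacle. You handle it with a wall curve $C$, the primitive relation $r(P)=[C]$, \cref{lem:sourcecoordinates}, and a comparison of $e$-supports. That argument works, but the paper needs none of it. Take $j\in J_i$ with $F_i\cdot R>0$. Either $j\in J_h$ for some $h\in I^-(R)$, so $j\in K$ by definition. Or every summand of $(\sum_{h\in I_j}F_h)\cdot R$ is nonnegative and the $i$-th one is positive, so $D_{e_j}\cdot R<0$ and $j\in J^-(R)\subseteq K$. This sign argument uses only the definitions of $K$ and $J^-(R)$. Your separate treatment of $J_{i_0}$ is also redundant: once $i_0\in I^-(R)$, $J_{i_0}\subseteq K$ by definition.

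What your route adds is some extra structure. It shows $\gamma(P)(1)=\mathcal N(R)$, so $\tau_R=\gamma(P)$, and it shows $J^+(C)\subseteq K$. Neither is needed for the lemma.

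One small imprecision. When you list the rays on the right-hand side of the relation, you omit rays of $P\cap\gamma(P)(1)$ with $c_\rho=1$. Such rays would have degree $0$, not lie in $\mathcal N(R)$, and break the support bound. The gap closes with a result you already cite: \cref{lem:sourcecoordinates}\cref{item:sourceprimitive} says every ray of $P$ has positive degree $1-c_\rho$ on $C$. This forces $c_\rho=0$, so $P\cap\gamma(P)(1)=\emptyset$, and your caveat about rays with $c_\rho\ge2$ is also unnecessary.
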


\begin{proof}
	We first verify that \cref{thm:sato} applies.
	By \cref{lem:inherit}, the variety $\tilde Y$ is smooth and hence $\QQ$-factorial.
	The morphism $\pi_Y\colon\tilde Y\to\AA^d$ is projective and toric.
	The contraction $g_R$ exists by \cref{cor:contract}.
	Since the birational morphism $\pi_Y$ factors through $g_R$, the contraction $g_R$ is birational as well.

	By the definition of intersection vectors and \cref{lem:D_e_j},
	\[
		D_{v_i}\cdot R=F_i\cdot R,\qquad
		D_{e_j}\cdot R=-\biggl(\sum_{i\in I_j}F_i\biggr)\cdot R .
	\]
	Hence the invariant prime divisors of negative degree on $R$ correspond precisely to $\rho_{v_i}$ ($i\in I^-(R)$) and $\rho_{e_j}$ ($j\in J^-(R)$).
	It follows from \cref{thm:sato} that
	\[
		\tau_R\coloneq\Cone(v_i\mid i\in I^-(R))+\Cone(e_j\mid j\in J^-(R))
	\]
	belongs to $\tilde\Sigma$ and that $\Exc(g_R)=V(\tau_R)$.

	Since $\tilde\Sigma$ is smooth, the invariant prime divisors corresponding to the rays of $\tau_R$ meet transversely along $V(\tau_R)$.
	Together with $F_i=D_{v_i}$, this proves the first equality and the transversality assertion.

	For the second equality, let $\sigma$ be the smallest face of $\Cone(e_1,\dotsc,e_d)$ containing $\tau_R$.
	Since $\pi_Y$ is induced by the identity map on $N=\ZZ^d$, its restriction $V(\tau_R)\to V(\sigma)$ is surjective, so $\pi_Y(V(\tau_R))=V(\sigma)$.

	Since $v_i=\sum_{j\in J_i}e_j$, the face $\sigma$ is given by
	\[
		\sigma=\Cone(e_j\mid j\in J),
		\qquad
		J\coloneq\bigcup_{i\in I^-(R)}J_i\cup J^-(R) .
	\]
	We claim that $J=\bigcup_{F_i\cdot R\ne0}J_i$.
	Indeed, let $j\in J$.
	If $j\in J_i$ for some $i\in I^-(R)$, then $F_i\cdot R\ne0$.
	If instead $j\in J^-(R)$, then $(\sum_{i\in I_j}F_i)\cdot R>0$.
	It follows that $F_i\cdot R>0$ for some $i\in I_j$.
	This proves $J\subseteq\bigcup_{F_i\cdot R\ne0}J_i$.

	For the reverse inclusion, let $j\in J_i$ with $F_i\cdot R\ne0$.
	If $F_i\cdot R<0$, then $i\in I^-(R)$ and hence $j\in J$.
	Assume that $F_i\cdot R>0$.
	If $j\in J_h$ for some $h\in I^-(R)$, then again $j\in J$.
	Otherwise, every $F_h\cdot R$ with $h\in I_j$ is nonnegative.
	Since $i\in I_j$ and $F_i\cdot R>0$, we have $j\in J^-(R)$ and therefore $j\in J$.
	Thus
	\[
		V(\sigma)=V(x_j\mid j\in J)=\bigcap_{F_i\cdot R\ne0}W_i. \qedhere
	\]
\end{proof}

\begin{prop}\label{prop:locus}
	In \cref{set:main}, let $R$ be an extremal ray of $\NEb(\tilde X/X)$, and let $g_R\colon\tilde X\to\tilde X_R$ be its contraction.
	Let $p\in\pi(\Exc(g_R))$ be a closed point, and let $\tilde Y_p\to\AA^d$ together with $\phi_p\colon U_p\to\AA^d$ be a coordinate model at $p$.
	For $1\le j\le d$, set $H_j\coloneq\phi_p^{-1}V(x_j)\subseteq U_p$ and $\tilde H_j\coloneq\pi_*^{-1}H_j\subseteq\tilde U_p$.
	Then
	\[
		\Exc(g_R)\cap\tilde U_p
		=\bigcap_{i\in I^-(R)}(E_i\cap\tilde U_p)\cap\bigcap_{j\in J^-_p(R)}\tilde H_j,
	\]
	and this intersection is transverse.
	In particular, it is smooth of codimension
	\[
		c_p(R)\coloneq\card{I^-(R)}+\card{J^-_p(R)}
	\]
	in $\tilde U_p$.
	Its image under $\pi$ is $\pi(\Exc(g_R)\cap\tilde U_p)=Z_R\cap U_p$.

	Consequently, $\Exc(g_R)$ is smooth and
	\[
		\Exc(g_R)\subseteq\bigcap_{i\in I^-(R)}E_i.
	\]
	The image $\pi(\Exc(g_R))$ is a union of connected components of $Z_R$.
\end{prop}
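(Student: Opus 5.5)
The plan is to reduce everything to the coordinate model at $p$ and then transport the toric description of \cref{lem:coordlocus} back along the \'etale map $\tilde\phi_p$.

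First, since $p\in\pi(\Exc(g_R))$, \cref{lem:localcontract} shows that $R_p\coloneq\iota_p^{-1}(R)$ is an extremal ray of $\NEb(\tilde Y_p/\AA^d)$ with $\iota_p(R_p)=R$. It also gives
\[
	\Exc(g_R)\cap\tilde U_p=\tilde\phi_p^{-1}(\Exc(g_{R_p})).
\]
Because $\iota_p$ preserves intersection numbers with the exceptional divisors (\cref{prop:localmodel}), we get $F_i\cdot R_p$ proportional to $E_i\cdot R$ with a common positive factor. Hence $I^-(R_p)=I^-(R)$ and $J^-(R_p)=J^-_p(R)$, once the empty stages are omitted. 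Those stages have $E_i\cdot R=0$ near $p$; more precisely, any $i$ with $E_i\cdot R\ne0$ has $F_i\cdot R_p\ne0$, so $i$ is a nonempty stage and $p\in Z_i$.

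Next, apply \cref{lem:coordlocus} to $R_p$. This shows that $\Exc(g_{R_p})$ is the transverse intersection of the $F_i$ for $i\in I^-(R)$ and the $D_{e_j}$ for $j\in J^-_p(R)$, and that its image is $\bigcap_{F_i\cdot R_p\ne0}W_i$. Now pull back along $\tilde\phi_p$. We have $\tilde\phi_p^*F_i=E_i|_{\tilde U_p}$ by \cref{prop:localmodel}, and $\tilde\phi_p^{-1}D_{e_j}=\tilde H_j$ because strict transforms commute with flat base change. Transversality is preserved by \'etale pullback, so the intersection is smooth of codimension $c_p(R)$. For the image, the cartesian square gives
\[
	\pi(\tilde\phi_p^{-1}(V))=\phi_p^{-1}(\pi_{Y_p}(V)),
\]
and therefore $\pi(\Exc(g_R)\cap\tilde U_p)=\phi_p^{-1}\bigl(\bigcap W_i\bigr)=Z_R\cap U_p$.

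For the global consequences, smoothness is local. It holds on the sets $\tilde U_p$ for $p\in\pi(\Exc(g_R))$, and these cover $\Exc(g_R)$. The inclusion into $\bigcap_{i\in I^-(R)}E_i$ is immediate from the local formula.

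For the last claim, set $T\coloneq\pi(\Exc(g_R))$. It is closed because $\pi$ is proper. By the local statement, $T\cap U_p=Z_R\cap U_p$ for every $p\in T$, so $T$ is open in $Z_R$. Being both open and closed in $Z_R$, $T$ is a union of connected components of $Z_R$.

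The main obstacle is the bookkeeping in the first step. One must check that the sign sets and $Z_R$ computed on $\tilde X$ agree with those computed for $R_p$ in the model, including the indices $i$ with $p\notin Z_i$. The argument above handles this: such $i$ give $F_i=0$, so $F_i\cdot R_p=0$ and they do not enter the relevant intersections; and every $i$ with $E_i\cdot R\ne0$ automatically satisfies $p\in Z_i$.
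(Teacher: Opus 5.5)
Your proposal is correct and follows the paper's proof essentially step for step. You reduce via \cref{lem:localcontract} to the extremal ray $R_p$ of the coordinate model, apply \cref{lem:coordlocus}, pull back along the \'etale map $\tilde\phi_p$, and conclude globally with the open-and-closed argument in $Z_R$. Your explicit check that every index with $p\notin Z_i$ has $E_i\cdot R=0$ spells out a bookkeeping point that the paper leaves implicit.
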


\begin{proof}
	Put
	\[
		R_p\coloneq\iota_p^{-1}(R)\subseteq\NEb(\tilde Y_p/\AA^d),
	\]
	where $\iota_p$ is the injection of \cref{lem:NEinj}.
	By \cref{lem:localcontract}, $R_p$ is an extremal ray with $\iota_p(R_p)=R$, and
	\[
		\Exc(g_R)\cap\tilde U_p=\tilde\phi_p^{-1}(\Exc(g_{R_p})).
	\]
	Since $\iota_p$ preserves intersection vectors, $I^-(R)=I^-(R_p)$ and $J^-_p(R)=J^-(R_p)$.
	Moreover, $\tilde H_j=\tilde\phi_p^{-1}D_{e_j}$, while
	$\restrdiv{E_i}{\tilde U_p}=\tilde\phi_p^{\,*}F_i$ by \cref{prop:localmodel}.
	Applying \cref{lem:coordlocus} to $R_p$ and pulling back along the \'etale morphism $\tilde\phi_p$ gives
	\[
		\Exc(g_R)\cap\tilde U_p
		=\bigcap_{i\in I^-(R)}(E_i\cap\tilde U_p)\cap\bigcap_{j\in J^-_p(R)}\tilde H_j,
	\]
	and shows that this intersection is transverse.
	Likewise, \cref{lem:coordlocus}, together with the cartesian diagram, gives
	\[
		\pi(\Exc(g_R)\cap\tilde U_p)
		=\phi_p^{-1}\biggl(\bigcap_{F_i\cdot R_p\ne0}W_i\biggr)
		=Z_R\cap U_p .
	\]

	As $p$ varies over $\pi(\Exc(g_R))$, these local descriptions cover $\Exc(g_R)$.
	It follows that $\Exc(g_R)$ is smooth and is contained in $E_i$ for every $i\in I^-(R)$.
	Finally, $\pi(\Exc(g_R))$ is closed because $\pi$ is proper.
	On the other hand, the local image formula gives
	\[
		\pi(\Exc(g_R))\cap U_p=\pi(\Exc(g_R)\cap\tilde U_p)=Z_R\cap U_p
	\]
	for every $p\in\pi(\Exc(g_R))$, so $\pi(\Exc(g_R))$ is open in $Z_R$.
	Thus it is both open and closed in $Z_R$, hence a union of connected components of $Z_R$.
\end{proof}

The following example shows that $\pi(\Exc(g_R))$ can be a proper subset of $Z_R$.

\begin{exa}\label{exa:locusproper}
	Let
	\[
		X=\AA^4_{x,y,z,w},\qquad Z_1=V(x,y),\qquad Z_2=V\bigl(x-z(z-1),y-z w\bigr).
	\]
	These smooth codimension-two centers have simple normal crossings, with
	\[
		Z_1\cap Z_2=V(x,y,z)\ \sqcup\ V(x,y,z-1,w).
	\]
	Over each point of the line $V(x,y,z)$, the fiber consists of two rational curves with vectors $(-1,1)$ and $(0,-1)$. Over the isolated transverse intersection $V(x,y,z-1,w)$, the fiber is $\PP^1\times\PP^1$, with ruling vectors $(-1,0)$ and $(0,-1)$.
	The remaining fibers contribute only $(-1,0)$ and $(0,-1)$, so
	\[
		\NEb(\tilde X/X)=\RRp[(-1,1),(0,-1)].
	\]
	Thus $R\coloneq\RRp(-1,1)$ is an extremal ray, and curves with class in $R$ occur precisely over the line component. Its contraction $g_R$ therefore satisfies
	\[
		\pi(\Exc(g_R))=V(x,y,z)\subsetneq Z_1\cap Z_2=Z_R.
	\]
\end{exa}

\begin{cor}\label{cor:typen}
	In \cref{set:main}, let $R$ be an extremal ray of $\NEb(\tilde X/X)$ with source $i_0$, and let $g_R\colon\tilde X\to\tilde X_R$ be the contraction of $R$.
	Then the following are equivalent:
	\begin{enumerate}
		\item\label{item:gRdivisorial} $g_R$ is divisorial,
		\item\label{item:divevery} $I^-(R)=\{i_0\}$ and $J^-_p(R)=\emptyset$ for every closed point $p\in\pi(\Exc(g_R))$ and every coordinate model at $p$, and
		\item\label{item:divsome} $I^-(R)=\{i_0\}$ and $J^-_p(R)=\emptyset$ for some closed point $p\in\pi(\Exc(g_R))$ and some coordinate model at $p$.
	\end{enumerate}
	When these conditions hold,
	\[
		\Exc(g_R)=E_{i_0},\qquad \pi(\Exc(g_R))=Z_{i_0}.
	\]
	If these equivalent conditions do not hold, then $g_R$ is small.
\end{cor}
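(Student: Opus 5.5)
The plan is to use the local description in \cref{prop:locus}. Near any closed point $p\in\pi(\Exc(g_R))$, the set $\Exc(g_R)\cap\tilde U_p$ is a transverse intersection of codimension $c_p(R)=\card{I^-(R)}+\card{J^-_p(R)}$. The first step is to observe that $I^-(R)\ne\emptyset$, since the source entry $E_{i_0}\cdot R$ is negative. Indeed, $R$ contains an elementary curve by \cref{thm:cone} (or a wall curve with source entry $-1$ by \cref{lem:wallcurves}), and a class in $R$ has source $i_0$ with negative source entry. Hence $i_0\in I^-(R)$ and $c_p(R)\ge1$ for every $p$. Moreover, $c_p(R)=1$ holds exactly when $I^-(R)=\{i_0\}$ and $J^-_p(R)=\emptyset$. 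The whole statement is therefore a statement about when the local codimension $c_p(R)$ equals $1$.

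For \cref{item:gRdivisorial}$\Rightarrow$\cref{item:divevery}, suppose $\Exc(g_R)$ is a prime divisor. Then near every $p\in\pi(\Exc(g_R))$ the locally closed set $\Exc(g_R)\cap\tilde U_p$ is nonempty, because $p$ lies in the image. Being an open subset of an irreducible divisor, it has codimension $1$, so $c_p(R)=1$. This holds for every $p$, and for every coordinate model at $p$, since \cref{prop:locus} applies to any such model. This gives \cref{item:divevery}. The implication \cref{item:divevery}$\Rightarrow$\cref{item:divsome} is trivial, because $\pi(\Exc(g_R))\neq\emptyset$: $g_R$ is birational and contracts the curves of $R$.

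For \cref{item:divsome}$\Rightarrow$\cref{item:gRdivisorial}, we have $c_p(R)=1$ at some $p$. Locally, $\Exc(g_R)\cap\tilde U_p=E_{i_0}\cap\tilde U_p$, so $\Exc(g_R)$ contains a nonempty open subset of the prime divisor $E_{i_0}$. Here we use that $E_{i_0}$ is irreducible, being the pullback of the exceptional divisor of the blowup of the smooth irreducible center $Z_{i_0}^{(i_0-1)}$; its pullback is irreducible because no later center contains it. By \cref{prop:locus}, $\Exc(g_R)$ is closed and smooth, hence a disjoint union of irreducible components, and it is contained in $E_{i_0}$ because $i_0\in I^-(R)$. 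Since $E_{i_0}$ is irreducible, the component meeting that open set equals $E_{i_0}$, so $E_{i_0}\subseteq\Exc(g_R)\subseteq E_{i_0}$. Thus $\Exc(g_R)=E_{i_0}$ is a prime divisor and $g_R$ is divisorial. Then $\pi(\Exc(g_R))=\pi(E_{i_0})=Z_{i_0}$, since $E_{i_0}$ maps onto $Z_{i_0}^{(i_0-1)}$, which maps onto $Z_{i_0}$.

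Finally, if the conditions fail, then by the negation of \cref{item:divsome} every $p\in\pi(\Exc(g_R))$ has $c_p(R)\ge2$. Since the open sets $\tilde U_p$ cover $\Exc(g_R)$, every component of $\Exc(g_R)$ has codimension at least two, so $g_R$ is small. The main point needing care is the irreducibility of $E_{i_0}$ together with the closed/smooth-components argument that upgrades the local equality to the global equality $\Exc(g_R)=E_{i_0}$.
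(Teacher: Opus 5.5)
Your proposal is correct and follows the paper's proof essentially step by step. Both read off the local codimension $c_p(R)=\card{I^-(R)}+\card{J^-_p(R)}$ from \cref{prop:locus}, use $i_0\in I^-(R)$, and upgrade $\Exc(g_R)\cap\tilde U_p=E_{i_0}\cap\tilde U_p$ to $\Exc(g_R)=E_{i_0}$ via closedness of $\Exc(g_R)$ and primality of $E_{i_0}$. One small slip: $E_{i_0}$ stays prime because no later center is contained in the strict transform of $E_{i_0}^{(i_0)}$ (since $Z_{i_0}\not\supseteq Z_h$ for $h>i_0$), not because no later center contains it.
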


\begin{proof}
	Assume first that $g_R$ is divisorial.
	Then $\Exc(g_R)$ is a prime divisor.
	Let $p\in\pi(\Exc(g_R))$ be a closed point, and choose a coordinate model at $p$.
	The intersection $\Exc(g_R)\cap\tilde U_p$ is a nonempty open subset of $\Exc(g_R)$, so it has codimension one in $\tilde U_p$.
	By \cref{prop:locus},
	\[
		1=\card{I^-(R)}+\card{J^-_p(R)}.
	\]
	Since the source $i_0$ belongs to $I^-(R)$, this equality gives $I^-(R)=\{i_0\}$ and $J^-_p(R)=\emptyset$.
	This proves \cref{item:divevery}.
	Since $\Exc(g_R)$ is nonempty, \cref{item:divevery} implies \cref{item:divsome}.

	Conversely, assume that \cref{item:divsome} holds for a closed point $p\in\pi(\Exc(g_R))$ and a coordinate model at $p$.
	By \cref{prop:locus}, $\Exc(g_R)\subseteq E_{i_0}$, and the local description in the chosen coordinate model gives
	\[
		\Exc(g_R)\cap\tilde U_p=E_{i_0}\cap\tilde U_p .
	\]
	Since $E_{i_0}$ is a prime divisor, this gives $\Exc(g_R)=E_{i_0}$.
	Consequently, $g_R$ is divisorial and $\pi(\Exc(g_R))=Z_{i_0}$.
	This proves that \cref{item:divsome} implies \cref{item:gRdivisorial}, together with the asserted descriptions of the exceptional locus and its image.

	Finally, if the equivalent conditions fail, then for every closed point $p\in\pi(\Exc(g_R))$ and every coordinate model at $p$, \cref{prop:locus} gives
	\[
		\codim_{\tilde U_p}(\Exc(g_R)\cap\tilde U_p)
		=\card{I^-(R)}+\card{J^-_p(R)}\ge2.
	\]
	These local pieces cover $\Exc(g_R)$, so $g_R$ is small.
\end{proof}

\subsection{Targets of the contractions}\label{sec:target}

In this subsection, we identify the target of the contraction of any face as the normalized blowup of an explicit intersection of powers of the center ideals.

In \cref{set:main}, for a vector $a=(a_1,\dotsc,a_n)\in\ZZ^n_{\ge0}$, set
\[
	\Ic^{(a)}\coloneq\Ic_{Z_1}^{a_1}\cap\dotsb\cap\Ic_{Z_n}^{a_n}.
\]
By \cref{thm:cone}, every face of $\NEb(\tilde X/X)$ has an integral supporting divisor $D=-\sum_i a_iE_i$.
The coefficients $a_i$ are nonnegative by \cref{lem:fiberclass}, so the ideal $\Ic^{(a)}$ is defined for every such divisor.

The next lemma computes the graded pieces that occur in the section algebra of $D$.

\begin{lem}\label{lem:valuation}
	In \cref{set:main}, let $a\in\ZZ^n_{\ge0}$.
	Then
	\[
		\pi_*\Ob{\tilde X}\biggl(-\sum_ia_iE_i\biggr)=\Ic^{(a)}.
	\]
\end{lem}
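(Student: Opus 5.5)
The plan is to reduce to a valuative statement and then compute that statement in the local coordinate models.

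First, both sides are subsheaves of $\Ob X$. Since $a_i\ge0$, the divisor $-\sum_ia_iE_i$ is anti-effective, so $\pi_*\Ob{\tilde X}(-\sum_ia_iE_i)\subseteq\pi_*\Ob{\tilde X}=\Ob X$. The last equality holds because $\pi$ is proper birational and $X$ is normal. The question is local on $X$. Away from $\bigcup_iZ_i$ both sheaves equal $\Ob X$, so it suffices to compare them on each coordinate neighborhood $U_p$.

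Concretely, a local function $f$ lies in the left-hand side exactly when $\operatorname{ord}_{E_i}$-type conditions hold: $\operatorname{div}(\pi^*f)\ge\sum_ia_iE_i$. Because the $E_i$ are not prime in general (they are pullbacks), I would rephrase this via the prime exceptional divisors. Let $E_i'$ be the strict transform of the $i$-th exceptional divisor, so that $E_i=\sum_h m_{ih}E'_h$.

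Second, I would pass to the coordinate model. Both sides are compatible with flat base change along the étale map $\phi_p$:
\begin{itemize}
\item pushforward commutes with flat base change, and $E_i|_{\tilde U_p}=\tilde\phi_p^*F_i$ by \cref{prop:localmodel};
\item ideal powers and intersections commute with flat base change, and $\Ic_{Z_i}|_{U_p}=\phi_p^{-1}\Ic_{W_i}\cdot\Ob{U_p}$.
\end{itemize}
Hence it suffices to prove $\pi_{Y*}\Ob{\tilde Y}(-\sum_ia_iF_i)=\bigcap_i\Ic_{W_i}^{a_i}$ on $\AA^d$, in \cref{set:coordinate}, where empty stages are dropped.

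Third, I would carry out the toric computation. Both sides are torus-invariant ideals of $k[x_1,\dotsc,x_d]$, so both are spanned by monomials $x^m$ with $m\in\ZZ^d_{\ge0}$. By \cref{lem:D_e_j} and the principal divisor formula,
\[
\Div(\chi^m)=\sum_jm_jD_{e_j}+\sum_i\innprod m{v_i}F_i .
\]
The divisor $F_i=D_{v_i}$ is prime by \cref{lem:validstar}. So $x^m$ lies in the left-hand side if and only if $\innprod m{v_i}=\sum_{j\in J_i}m_j\ge a_i$ for all $i$. On the other side, $\Ic_{W_i}^{a_i}$ is a monomial ideal whose monomials are exactly those with $\sum_{j\in J_i}m_j\ge a_i$. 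An intersection of monomial ideals is spanned by the common monomials. The two sets of monomials therefore coincide, which proves equality.

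The main obstacle is the second step. I need to make sure that $\pi_*$ of the pulled-back divisorial sheaf commutes with the étale base change, including working with $\Ob{\tilde U_p}(-\sum a_iE_i)=\tilde\phi_p^*\Ob{\tilde Y_p}(-\sum a_iF_i)$ as invertible sheaves. I also need the monomial description of the left-hand side, which means checking that torus-invariance forces a monomial basis. Both should follow from flat base change for quasi-coherent pushforwards and from the standard torus-weight decomposition.
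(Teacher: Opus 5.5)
Your proof is correct, but it takes a different route from the paper.

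\textbf{How the two routes differ.} The paper argues valuatively on $X$ itself. Since $Z_h\not\supseteq Z_i$ for $h<i$, the generic point of $Z_i$ avoids all earlier centers, so the prime divisor $E_i$ induces the $\Ic_{Z_i}$-adic order valuation. At a point $p\in Z_i$ this is the monomial valuation with weight one on the parameters $x_j$, $j\in J_i$. Its valuation ideals are the ordinary powers $\Ic_{Z_i,p}^{a_i}$, and the paper then intersects over $i$.

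You instead use flat base change along $\phi_p$, both for the pushforward and for powers and intersections of ideals, to pull the identity back from the coordinate model. There, \cref{lem:validstar} and the principal divisor formula reduce both sides to the same monomial condition $\innprod{m}{v_i}=\sum_{j\in J_i}m_j\ge a_i$.

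\textbf{What each route buys.} Your route reuses the \'etale-local toric machinery of \cref{prop:localmodel,lem:validstar}, in the same spirit as the paper's later proofs of base-point-freeness and flips. It never has to discuss valuations on $\Ob{X,p}$ or symbolic versus ordinary powers. The paper's route is shorter and intrinsic, and its identification of $v_{E_i}$ with the $\Ic_{Z_i}$-adic valuation is reused in the proof of \cref{thm:reorderflip}.

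The two points you flag as obstacles are standard. Flat base change gives $\pi_*\tilde\phi_p^*\mathcal F\isom\phi_p^*(\pi_{Y_p})_*\mathcal F$ compatibly with the inclusions into the structure sheaves. Sections of a torus-invariant divisor on a toric variety have the usual basis of characters.

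\textbf{One correction.} Your remark that the $E_i$ are not prime in general is false in this setting. For $h>i$, the generic point of $Z_h^{(h-1)}$ lies over the generic point of $Z_h$, and $Z_h\not\subseteq Z_i$. So no later center is contained in the strict transform of $E_i^{(i)}$, and the pullback $E_i$ equals this prime strict transform. This is the argument at the end of the proof of \cref{lem:validstar}, and it is why the paper can speak of $v_{E_i}$. You never use the decomposition $E_i=\sum_hm_{ih}E'_h$, and your toric step correctly uses that $F_i=D_{v_i}$ is prime, so the slip does not affect the argument.
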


\begin{proof}
	Put $D\coloneq-\sum_ia_iE_i$.
	For each $i$, let $v_{E_i}$ denote the divisorial valuation of $k(X)$ associated with the prime divisor $E_i$ on $\tilde X$.
	Since $D\le0$, we have $\Ob{\tilde X}(D)\subseteq\Ob{\tilde X}$.
	Since $\pi_*\Ob{\tilde X}=\Ob X$, we may therefore regard $\pi_*\Ob{\tilde X}(D)$ as an ideal sheaf on $X$.
	It is enough to compare stalks at closed points.
	Let $p\in X$ be a closed point.
	The definition of $\Ob{\tilde X}(D)$ gives
	\[
		(\pi_*\Ob{\tilde X}(D))_p
		=\sset{f\in\Ob{X,p}}{v_{E_i}(f)\ge a_i\text{ for every $i$ such that }p\in Z_i}.
	\]

	Since $Z_h\not\supseteq Z_i$ for $h<i$ (\cref{set:main}), the generic point $\eta_i$ of $Z_i$ lies on none of the earlier centers.
	Thus, near $\eta_i$, the $i$-th blowup is identified with the blowup of $X$ along $Z_i$.
	The corresponding exceptional valuation is therefore the $\Ic_{Z_i}$-adic order.
	The later blowups preserve the prime strict transform of $E_i^{(i)}$ and hence leave this valuation unchanged.

	If $p\notin Z_i$, then the $i$-th valuation condition is absent and $\Ic_{Z_i,p}^{a_i}=\Ob{X,p}$.
	If $p\in Z_i$, choose a regular system of parameters $x_1,\dotsc,x_d$ of $\Ob{X,p}$ such that $\Ic_{Z_i,p}=(x_j\mid j\in J_i)$.
	Then $v_{E_i}$ is the monomial valuation that assigns weight one to the parameters $x_j$ with $j\in J_i$ and weight zero to the remaining parameters.
	Consequently,
	\[
		\sset{f\in\Ob{X,p}}{v_{E_i}(f)\ge a_i}=\Ic_{Z_i,p}^{a_i}.
	\]
	Intersecting these valuation ideals over all $i$ gives
	\[
		\pi_*\Ob{\tilde X}(D)
		=\bigcap_i\Ic_{Z_i}^{a_i}=\Ic^{(a)}. \qedhere
	\]
\end{proof}

We can now identify the target.

\begin{thm}\label{thm:targetn}
	In \cref{set:main}, let $F$ be a face of $\NEb(\tilde X/X)$, and write
	$g_F\colon\tilde X\to\tilde X_F$ for its contraction.
	Let $a=(a_1,\dotsc,a_n)\in\ZZ^n_{\ge0}$ be such that $-\sum_i a_iE_i$ is a supporting divisor of $F$.
	Then
	\[
		\tilde X_F
		\isom\overline{\Bl}_{\Ic^{(a)}}X,
	\]
	where $\overline{\Bl}_{\Ic^{(a)}}X$ denotes the normalization of $\Bl_{\Ic^{(a)}}X$.
	In particular, this normalized blowup is independent of the choice of $a$, up to isomorphism over $X$.
\end{thm}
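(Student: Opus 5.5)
Set $D\coloneq-\sum_ia_iE_i$. The plan is to realize $\tilde X_F$ as the relative Proj of the section algebra of $D$, and to compare that algebra with the Rees algebra of $\Ic^{(a)}$.

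First, by \cref{thm:bpf}, $D$ is $\pi$-base-point-free. Since $D$ is a supporting divisor of $F$, the contraction $g_F$ is the Stein factorization of the morphism over $X$ defined by $D$, as in the proof of \cref{cor:contract}. Hence $D=g_F^*A$ for some Cartier divisor $A$ on $\tilde X_F$ that is ample over $X$. Because $(g_F)_*\Ob{\tilde X}=\Ob{\tilde X_F}$, we obtain
\[
	\tilde X_F\isom\Proj_X\bigoplus_{m\ge0}\pi_*\Ob{\tilde X}(mD).
\]
By \cref{lem:valuation} applied to $ma$, the $m$-th graded piece is $\Ic^{(ma)}=\bigcap_i\Ic_{Z_i}^{ma_i}$. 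So $\tilde X_F\isom\Proj_X\mathcal S$, where $\mathcal S\coloneq\bigoplus_m\Ic^{(ma)}$, a finitely generated $\Ob X$-algebra. Finite generation holds because $A$ is relatively ample and $\tilde X_F\to X$ is projective.

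Next, I compare $\mathcal S$ with the Rees algebra $\mathcal R\coloneq\bigoplus_m(\Ic^{(a)})^m$. Since $(\Ic^{(a)})^m\subseteq\Ic^{(ma)}$, we have $\mathcal R\subseteq\mathcal S$, and these agree in degrees $0$ and $1$. I then want to show that $\mathcal S$ is integral over $\mathcal R$ and that $\mathcal S$ is integrally closed in its fraction field. Integral closedness holds because $\tilde X$ is normal (\cref{lem:inherit}), so $\mathcal S$ is the section ring of a Cartier divisor on a normal variety; alternatively, $\tilde X_F$ is normal because it is the target of a contraction from a normal variety.

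For integrality, I work locally. Each stalk $\Ic^{(ma)}_p$ is defined by monomial valuation conditions in the regular parameters of \cref{lem:valuation}. In the coordinate model, these become monomial ideals in $k[x_1,\dotsc,x_d]$, and $\Ic^{(ma)}$ is integral over $(\Ic^{(a)})^m$ by the Newton polyhedron description: the exponent set of $\Ic^{(ma)}$ is the lattice points of $m$ times a rational polyhedron, while the exponent set of $\Ic^{(a)}$ is the lattice points of that polyhedron itself. More simply, for $f\in\Ic^{(ma)}$ we have $f^N\in\Ic^{(Nma)}$, and since $\mathcal S$ is finitely generated, $\mathcal S^{(r)}$ is generated in degree one for some $r$. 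With that, the integral dependence follows by the standard Veronese argument. Flat base change along the étale map $\phi_p$ transports this from the model to $X$.

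Given integrality, $\Proj\mathcal S\to\Proj\mathcal R=\Bl_{\Ic^{(a)}}X$ is finite and birational with normal source, so it is the normalization. To avoid issues with $\Proj$ of a non-standard-graded algebra, I would pass to a Veronese subalgebra $\mathcal S^{(r)}$ generated in degree one and compare it with $\mathcal R^{(r)}$, which gives the blowup of $(\Ic^{(a)})^r$ and hence the same $\Bl_{\Ic^{(a)}}X$. Independence of the choice of $a$ then follows, since $\tilde X_F$ depends only on $F$.

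The main obstacle is the integrality step: checking carefully that $\mathcal S$ lies in the integral closure of $\mathcal R$, in particular that the Rees-valuation description of $\overline{(\Ic^{(a)})^m}$ contains $\Ic^{(ma)}$. I would first try to settle this in the monomial coordinate model and then descend along the étale map $\phi_p$.
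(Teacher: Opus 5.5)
\textbf{Gap: the integrality step.} Your reduction $\tilde X_F\isom\Proj_X\bigoplus_{m\ge0}\Ic^{(ma)}$ is correct. If $\mathcal S$ were known to be integral over $\mathcal R$, the rest of your argument would go through. The gap is exactly the step you flag, the inclusion $\Ic^{(ma)}\subseteq\overline{(\Ic^{(a)})^m}$, and neither of your arguments proves it.

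In the monomial model, put $P=\{u\in\RR^d_{\ge0}:\sum_{j\in J_i}u_j\ge a_i\text{ for all }i\}$. The exponents of $\overline{(\Ic^{(a)})^m}$ are the lattice points of $m\cdot\mathrm{conv}(P\cap\ZZ^d)$, whereas those of $\Ic^{(ma)}$ are the lattice points of $mP$. These agree for all $m$ only when $P$ has lattice vertices, and nothing in your argument ensures this.

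The Veronese argument shows only that $\mathcal S$ is integral over $\mathcal S^{(r)}$, which is the Rees algebra of $\Ic^{(ra)}$. It says nothing about $(\Ic^{(a)})^r$.

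In fact the inclusion fails when $-\sum_ia_iE_i$ is not nef. Take $X=\AA^3$, $Z_1=V(x,y)$, $Z_2=V(y,z)$, $Z_3=V(x,z)$ and $a=(1,1,1)$. Then $\Ic^{(a)}=(xy,yz,xz)$, and $xyz\in\Ic^{(2a)}$ has order $3$. Every element integral over $(\Ic^{(a)})^2$ has order at least $4$, so $xyz\notin\overline{(\Ic^{(a)})^2}$. Here $-E_1-E_2-E_3$ has degree $-1$ on the curve of class $(-1,1,1)$ over the origin. So the nefness of $D$ must enter the integrality step, and in your proposal it does not.

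\textbf{The missing idea.} Use base-point-freeness as the paper does. By \cref{thm:bpf} and \cref{lem:valuation}, the evaluation map $\pi^*\Ic^{(a)}=\pi^*\pi_*\Ob{\tilde X}(D)\to\Ob{\tilde X}(D)$ is surjective. That is, $\Ic^{(a)}\Ob{\tilde X}=\Ob{\tilde X}(D)$ is invertible. Then $\Ic^{(ma)}=\pi_*\bigl((\Ic^{(a)})^m\Ob{\tilde X}\bigr)\subseteq\overline{(\Ic^{(a)})^m}$ by the valuative criterion for integral closure, since $\pi$ is proper and birational. This closes your gap. In the coordinate model, it amounts to saying that $P$ is the polyhedron of the nef torus-invariant Cartier divisor $D_Y$, so its vertices are lattice points.

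Once you know $\Ic^{(a)}\Ob{\tilde X}$ is invertible, the paper skips the Rees-algebra comparison entirely. The universal property of blowing up and the normality of $\tilde X$ give $g\colon\tilde X\to\bar X\coloneq\overline{\Bl}_{\Ic^{(a)}}X$ with $g_*\Ob{\tilde X}=\Ob{\bar X}$. Moreover $g^*(\Ic^{(a)}\Ob{\bar X})=\Ob{\tilde X}(D)$, where $\Ic^{(a)}\Ob{\bar X}$ is ample over $X$. Hence $g$ contracts exactly the curves with $D\cdot C=0$, i.e.\ those with class in $F$.
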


\begin{proof}
	Set $D\coloneq-\sum_i a_iE_i$ and $\mathfrak a\coloneq\Ic^{(a)}$.
	The divisor $D$ is $\pi$-nef because it is a supporting divisor of $F$.
	By \cref{lem:valuation,thm:bpf}, the evaluation map
	\[
		\pi^*\mathfrak a
		=\pi^*\pi_*\Ob{\tilde X}(D)
		\longrightarrow\Ob{\tilde X}(D)
	\]
	is surjective.
	Since $D\le0$, the invertible sheaf $\Ob{\tilde X}(D)$ is an ideal subsheaf of $\Ob{\tilde X}$, and the image of the evaluation map is $\mathfrak a\Ob{\tilde X}$.
	Hence $\mathfrak a\Ob{\tilde X}=\Ob{\tilde X}(D)$.

	By the universal property of the blowup, $\pi$ factors through $\Bl_{\mathfrak a}X$.
	Since $\tilde X$ is normal, the morphism $\tilde X\to\Bl_{\mathfrak a}X$ factors uniquely through the normalization $\bar X\coloneq\overline{\Bl}_{\mathfrak a}X$.
	We therefore obtain a factorization
	\[
	\begin{tikzcd}
		\pi\colon\tilde X\arrow[r,"g"]
			& \bar X=\overline{\Bl}_{\mathfrak a}X\arrow[r,"h"]
			& X,
	\end{tikzcd}
	\]
	where $g$ is projective and birational.
	Since $\bar X$ is normal, $g_*\Ob{\tilde X}=\Ob{\bar X}$.

	The tautological ideal $\mathfrak a\Ob{\Bl_{\mathfrak a}X}$ is invertible and ample over $X$.
	Its pullback $L\coloneq\mathfrak a\Ob{\bar X}$ is therefore invertible and $h$-ample.
	Moreover,
	\[
		g^*L=\mathfrak a\Ob{\tilde X}=\Ob{\tilde X}(D).
	\]
	Let $C\subseteq\tilde X$ be a $\pi$-contracted curve.
	Since $L$ is $h$-ample, the curve $C$ is contracted by $g$ if and only if $D\cdot C=(g^*L)\cdot C=0$.
	Since $D$ is a supporting divisor of $F$, this is equivalent to $[C]\in F$.
	Thus $g$ is a contraction of $F$, and $\tilde X_F\isom\overline{\Bl}_{\mathfrak a}X$.
\end{proof}

\subsection{Flips and reordered blowups}\label{sec:flips}

We prove that every small extremal ray contraction admits a $D$-flip for every $\RR$-Cartier divisor $D$ that is negative on the ray.
If $X$ is $\QQ$-factorial, then so is $\tilde X$ by \cref{lem:inherit}.
In this case, the blowup formula at each stage gives
\[
	K_{\tilde X}=\pi^*K_X+\sum_{i=1}^n(c_i-1)E_i.
\]
Thus the construction gives a flip, a flop, or an anti-flip according to whether $\sum_i(c_i-1)E_i$ is negative, numerically trivial, or positive on the ray.
We then characterize when a different ordering of the same centers gives the flipped model.

Following \cite[\S2.1]{BirkarFlips}, we define $D$-flips as follows.

\begin{defi}\label{def:flip}
	Let $g\colon X\to Y$ be a projective small birational morphism of normal varieties, and let $D$ be an $\RR$-Cartier divisor on $X$ such that $-D$ is $g$-ample.
	A \emph{$D$-flip} of $g$ is a projective small birational morphism $g^+\colon X^+\to Y$ such that $X^+$ is normal and the strict transform $D^+$ of $D$ to $X^+$ is $\RR$-Cartier and $g^+$-ample.
\end{defi}

\begin{rem}\label{rem:flipcriterion}
	Assume that $D$ is integral.
	Then by \cite[Lemma~6.2]{KollarMori}, the $D$-flip exists if and only if the graded $\Ob Y$-algebra $\bigoplus_{m\ge0}g_*\Ob X(mD)$ is finitely generated.
	Whenever it exists, it is unique over $Y$ and is given by
	\[
		X^+=\Proj_Y\bigoplus_{m\ge0}g_*\Ob X(mD).
	\]
\end{rem}

\begin{lem}\label{lem:flipindependence}
	In \cref{set:main}, let $R$ be an extremal ray of $\NEb(\tilde X/X)$ whose contraction $g_R\colon\tilde X\to\tilde X_R$ is small.
	Let $D$ be an $\RR$-Cartier divisor on $\tilde X$ with $D\cdot R<0$.
	Assume that the $D$-flip $g_R^+\colon X^+\to\tilde X_R$ exists.
	Then it is also a $D'$-flip for any $\RR$-Cartier divisor $D'$ with $D'\cdot R<0$.
\end{lem}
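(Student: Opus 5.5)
The plan is to show that the strict transform $D'^+$ of $D'$ on $X^+$ is $\RR$-Cartier and $g_R^+$-ample. Everything should reduce to the relative Picard number of $\tilde X$ over $\tilde X_R$ being one. By \cref{cor:contract} applied to the face $R$, the sequence
\[
0\to N^1(\tilde X_R/X)\to N^1(\tilde X/X)\to N^1(\tilde X/\tilde X_R)\to 0
\]
is exact, and $\dim N^1(\tilde X/\tilde X_R)=1$. Since $D\cdot R<0$, the class of $D$ spans $N^1(\tilde X/\tilde X_R)$. Hence there is a real number $\lambda>0$ with $D'\cdot R=\lambda D\cdot R$, and exactness shows that $D'-\lambda D$ is numerically equivalent over $X$ to $g_R^*B$ for some $\RR$-Cartier class $B$ on $\tilde X_R$.

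I would upgrade this numerical statement to an equality of divisors. By \cref{lem:divisordecomposition}, extended $\RR$-linearly, we can write $D'-\lambda D=\pi^*G-\sum_i b_iE_i$, where $G$ is an $\RR$-Cartier divisor on $X$. The $E_i$-part $-\sum_i b_iE_i$ has class in $F^\perp$ for $F=R$. The proof of \cref{cor:contract} shows that each integral divisor $\sum b_iE_i$ with class in $R^\perp$ descends to a Cartier divisor on $\tilde X_R$. Rational polyhedrality lets us write the real vector $b$ as a real combination of integral vectors in $R^\perp$. It follows that $-\sum b_iE_i=g_R^*B_0$ as divisors, for some $\RR$-Cartier $B_0$ on $\tilde X_R$. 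We then set $B\coloneq h^*G+B_0$, where $h\colon\tilde X_R\to X$ is the structure map, and obtain
\[
D'=\lambda D+g_R^*B
\]
as $\RR$-divisors on $\tilde X$.

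Next I would take strict transforms on $X^+$. Since $g_R$ and $g_R^+$ are both small, strict transform commutes with the linear combination. The strict transform of $g_R^*B$ is $(g_R^+)^*B$, because both sides agree outside a codimension-two subset of a normal variety. This gives $D'^+=\lambda D^++(g_R^+)^*B$. The right-hand side is $\RR$-Cartier, since $D^+$ is $\RR$-Cartier by the assumption that $g_R^+$ is a $D$-flip. It is also $g_R^+$-ample, being a positive multiple of a $g_R^+$-ample divisor plus a pullback from the base. Finally, $-D'$ is $g_R$-ample because $D'\cdot R<0$ and $\rho(\tilde X/\tilde X_R)=1$. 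So $g_R^+$ satisfies \cref{def:flip} for $D'$.

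The main obstacle is the passage from numerical equivalence over $X$ to an actual equality of divisors modulo pullbacks. The key point is that the $\pi^*$-part is harmless and the $E_i$-part descends exactly by the argument of \cref{cor:contract}; the real coefficients are handled by the rational polyhedral structure of $R^\perp\cap\bigoplus_i\QQ E_i$.
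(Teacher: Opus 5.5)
Your proposal is correct and follows the paper's proof: both produce $t>0$ and an $\RR$-Cartier divisor $L$ on $\tilde X_R$ with $D'=tD+g_R^*L$ as an actual equality of $\RR$-divisors, and then take strict transforms on $X^+$. The only difference is how the numerical statement is upgraded to this equality. The paper takes $L$ from the exact sequence of \cref{cor:contract} and shows the residual is a pullback from $X$ using \cref{lem:divisordecomposition} and the linear independence of the $[E_i]$ in $N^1(\tilde X/X)$. You instead descend the $E_i$-part directly, using the divisor-level descent argument inside the proof of \cref{cor:contract} together with the rationality of $R^\perp$; with this route your first-paragraph numerical step is not actually needed.
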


\begin{proof}
	Let $D'$ be an $\RR$-Cartier divisor on $\tilde X$ with $D'\cdot R<0$.
	We can choose a real number $t>0$ such that $(D'-tD)\cdot R=0$.
	By \cref{cor:contract}, there is an $\RR$-Cartier divisor $L$ on $\tilde X_R$ such that $D'-tD-g_R^*L$ is numerically trivial over $X$.
	Applying \cref{lem:divisordecomposition} by $\RR$-linearity, we can write
	\[
		D'-tD-g_R^*L=\pi^*D_0+\sum_i a_iE_i,
	\]
	where $D_0$ is an $\RR$-Cartier divisor on $X$ and $a_i\in\RR$.
	Since $D'-tD-g_R^*L$ is numerically trivial over $X$, we have $\sum_i a_i[E_i]=0$ in $N^1(\tilde X/X)$, so $a_i=0$ for every $i$.
	Let $h\colon\tilde X_R\to X$ be the structure morphism.
	Replacing $L$ by $L+h^*D_0$, we obtain
	\[
		D'=tD+g_R^*L.
	\]
	Since $g_R$ and $g_R^+$ are small, taking strict transforms on $X^+$ gives
	\[
		(D')^+=tD^++(g_R^+)^*L.
	\]
	The divisor $D^+$ is $\RR$-Cartier and $g_R^+$-ample, and $t>0$.
	Hence $(D')^+$ is also $\RR$-Cartier and $g_R^+$-ample, so $g_R^+$ is a $D'$-flip.
\end{proof}

We next prove the existence of flips for coordinate blowups.

\begin{lem}\label{lem:localflipfan}
	In \cref{set:coordinate}, let $R$ be an extremal ray of $\NEb(\tilde Y/\AA^d)$ whose contraction $g_R\colon\tilde Y\to\tilde Y_R$ is small.
	Let $D=\sum_{i=1}^n a_iF_i$ be an integral divisor such that $D\cdot R<0$.
	Then the $D$-flip $g_R^+\colon\tilde Y^+\to\tilde Y_R$ of $g_R$ exists.
\end{lem}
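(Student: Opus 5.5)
By \cref{rem:flipcriterion}, it suffices to show that the graded $\Ob{\tilde Y_R}$-algebra
\[
	\mathcal A\coloneq\bigoplus_{m\ge0}(g_R)_*\Ob{\tilde Y}(mD)
\]
is finitely generated. The flipped variety is then $\Proj_{\tilde Y_R}\mathcal A$.

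The plan is to realize everything in toric geometry. By \cref{lem:validstar}, $\tilde Y=X_{\tilde\Sigma}$ is a smooth toric variety and $D=\sum_ia_iD_{v_i}$ is torus-invariant. The contraction $g_R$ exists by \cref{cor:contract}. Its supporting divisor is $\pi_Y$-base-point-free by \cref{thm:bpf}, and it can be chosen torus-invariant (a combination of the $F_i$). Hence $g_R$ is the Stein factorization of a toric morphism, and is itself a toric morphism $X_{\tilde\Sigma}\to X_{\Sigma_R}$ for a fan $\Sigma_R$ coarsening $\tilde\Sigma$. Concretely, $\Sigma_R$ is obtained by merging the cones of $\tilde\Sigma$ on which the supporting divisor's piecewise-linear function is linear, as in \cite{FujinoSato}. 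Over each affine toric chart $U_\mu$, $\mu\in\Sigma_R$, the algebra $\mathcal A$ is then a $T$-graded algebra. It equals the semigroup algebra of
\[
	S_\mu\coloneq\sset{(m,u)\in\ZZ_{\ge0}\times M}{\innprod{u}{u_\rho}\ge-m\,d_\rho\text{ for all }\rho\in\tilde\Sigma(1),\ \rho\subseteq\mu},
\]
where $d_\rho$ is the coefficient of $D_\rho$ in $D$. To justify this, note that $\Gamma(g_R^{-1}U_\mu,\Ob{}(mD))$ is spanned by characters $\chi^u$ satisfying these inequalities over the rays of $\tilde\Sigma$ contained in $\mu$. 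These rays are exactly those of the cones of $\tilde\Sigma$ lying in $\mu$, because $g_R$ is proper and $\tilde\Sigma$ subdivides $\mu$.

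The monoid $S_\mu$ is the set of lattice points of a rational polyhedral cone in $\RR\times M_\RR$, cut out by finitely many rational linear inequalities. By Gordan's lemma it is finitely generated, and hence so is $k[S_\mu]$. Since $\Sigma_R$ has finitely many cones and the affine charts cover $\tilde Y_R$, $\mathcal A$ is finitely generated. \cref{rem:flipcriterion} then gives the $D$-flip. Alternatively, one can simply invoke the toric flip existence theorem of Fujino--Sato \cite{FujinoSato} (see also Reid's description). It applies because $\tilde Y$ is $\QQ$-factorial, $g_R$ is a small toric extremal contraction, and $D$ is torus-invariant with $-D$ being $g_R$-ample. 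The relative Picard number is one: this follows from \cref{cor:contract}, which gives $\rho(\tilde Y/\tilde Y_R)=1$. Since $\rho(\tilde Y/\tilde Y_R)=1$ and $D\cdot R<0$, the divisor $-D$ is $g_R$-ample.

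The main obstacle is the identification of $g_R$ with a toric morphism. The abstract contraction of \cref{cor:contract} must be torus-equivariant with a toric target, so that the semigroup description of $\mathcal A$ is valid. I would handle this by taking the supporting divisor $H=-\sum b_iF_i$ torus-invariant and noting that $\bigoplus_m(\pi_Y)_*\Ob{}(mH)$ is $M$-graded. Then $\tilde Y_R=\Proj_{\AA^d}$ of this graded algebra, normalized, is a normal toric variety. By \cref{thm:targetn} it is the normalized blowup of the monomial ideal $\Ic^{(b)}$, which is visibly toric. Once this is in place, the remaining steps are routine applications of Gordan's lemma.
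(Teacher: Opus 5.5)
Your proposal is correct, and your ``alternatively'' sentence is exactly the paper's proof. The paper only notes that $g_R$ is toric and $D$ is torus-invariant, and then cites the toric elementary transformation theorem \cite[Theorem~4.8]{FujinoSato}.

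Your primary route is different. You first identify $g_R$ with a proper toric morphism $X_{\tilde\Sigma}\to X_{\Sigma_R}$. You then compute $\bigoplus_{m\ge0}(g_R)_*\Ob{\tilde Y}(mD)$ over each chart $U_\mu$ as the semigroup algebra of the lattice points of a rational polyhedral cone. Gordan's lemma and \cref{rem:flipcriterion} then finish the argument.

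This route has several advantages.
\begin{itemize}
\item It is more elementary and self-contained: it uses only Gordan's lemma and \cite[Lemma~6.2]{KollarMori}.
\item It proves more: every torus-invariant divisorial algebra over a proper toric morphism is finitely generated. Smallness and extremality enter only through the hypotheses of the flip criterion. There you correctly get the $g_R$-ampleness of $-D$ from $\rho(\tilde Y/\tilde Y_R)=1$ in \cref{cor:contract} together with relative Kleiman.
\item It gives directly the finite generation of the local algebra $\mathcal A_p$ that the proof of \cref{thm:flip} actually uses. The paper instead recovers this from the existence of the toric flip via \cref{rem:flipcriterion}.
\item You justify the toricity of $g_R$, which the paper simply asserts. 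You do this either via \cref{thm:targetn} and the monomial ideal $\Ic^{(b)}$, or via a torus-invariant base-point-free supporting divisor.
\end{itemize}
The paper's citation gains brevity and constructs the flip directly in the toric category.
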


\begin{proof}
	The contraction $g_R$ is toric, and $D$ is a torus-invariant Cartier divisor.
	Thus the toric elementary transformation theorem \cite[Theorem~4.8]{FujinoSato} gives the $D$-flip.
\end{proof}

We now pass from the coordinate statement to the general case.

\begin{thm}\label{thm:flip}
	In \cref{set:main}, let $R$ be an extremal ray of $\NEb(\tilde X/X)$ whose contraction $g_R\colon\tilde X\to\tilde X_R$ is small.
	Let $D$ be an $\RR$-Cartier divisor on $\tilde X$ such that $D\cdot R<0$.
	Then the $D$-flip
	\[
		g_R^+\colon\tilde X^+\to\tilde X_R
	\]
	of $g_R$ exists.
\end{thm}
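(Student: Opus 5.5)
By \cref{lem:flipindependence}, we may replace $D$ by any convenient $\RR$-Cartier divisor negative on $R$. We take an integral one. Since $\ell_{i_0}$ and the classes $[E_i]$ span, some integral combination $D=\sum_i a_iE_i$ satisfies $D\cdot R<0$. For instance, $D=E_{i_0}$ works, because $E_{i_0}\cdot R<0$ by \cref{thm:num} applied to an elementary generator of $R$. Once the $D$-flip exists for this $D$, \cref{lem:flipindependence} transfers it to the given $D$. By \cref{rem:flipcriterion}, it therefore suffices to show that the graded $\Ob{\tilde X_R}$-algebra
\[
	\mathcal A\coloneq\bigoplus_{m\ge0}(g_R)_*\Ob{\tilde X}(mD)
\]
is finitely generated.

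Finite generation is local on $\tilde X_R$, so we cover $\tilde X_R$ by two kinds of open sets. The first is the complement of $g_R(\Exc(g_R))$. There $g_R$ is an isomorphism, so $\mathcal A$ is a polynomial-type algebra of an invertible sheaf and is finitely generated. The second kind consists of the preimages $\tilde U_{p,R}$ of the coordinate neighborhoods $U_p$, for $p\in\pi(\Exc(g_R))$. On $\tilde U_{p,R}$ we use \cref{lem:localcontract}. It gives cartesian squares in which $\tilde U_p\to\tilde U_{p,R}$ is the base change of the toric contraction $g_{R_p}\colon\tilde Y_p\to\tilde Y_{p,R}$ along the étale morphism $\tilde\phi_{p,R}$, and $R_p$ is an extremal ray. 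Moreover, $g_{R_p}$ is small, since its exceptional locus pulls back to $\Exc(g_R)\cap\tilde U_p$ and étale maps preserve codimension.

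Set $D_Y\coloneq\sum_ia_iF_i$. Then $D|_{\tilde U_p}=\tilde\phi_p^*D_Y$ by \cref{prop:localmodel}. Because $\iota_p$ preserves intersection vectors and $\iota_p(R_p)=R$, we get $D_Y\cdot R_p<0$. \Cref{lem:localflipfan} then gives the $D_Y$-flip of $g_{R_p}$, so by \cref{rem:flipcriterion} the algebra $\bigoplus_m(g_{R_p})_*\Ob{\tilde Y_p}(mD_Y)$ is finitely generated over $\Ob{\tilde Y_{p,R}}$. Flat base change along $\tilde\phi_{p,R}$ identifies
\[
	\tilde\phi_{p,R}^*(g_{R_p})_*\Ob{\tilde Y_p}(mD_Y)\isom(g_R)_*\Ob{\tilde X}(mD)|_{\tilde U_{p,R}}
\]
degreewise and compatibly with multiplication. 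Since finite generation is preserved by base change, $\mathcal A|_{\tilde U_{p,R}}$ is finitely generated.

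The sets of both kinds cover $\tilde X_R$, so $\mathcal A$ is finitely generated. Hence the $D$-flip exists, and it is given by $\tilde X^+=\Proj_{\tilde X_R}\mathcal A$.

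The main obstacle is the compatibility step. We must make sure that the pushforward sheaves commute with the étale (flat) base change. This holds because $g_{R_p}$ is quasi-compact and separated and $\Ob{\tilde Y_p}(mD_Y)$ is quasi-coherent. We also need the small contraction and the negativity condition to pass exactly to the toric model, which uses \cref{lem:localcontract} and the equality of intersection vectors. A further point is the reduction to integral $D$, which is needed to invoke \cref{rem:flipcriterion} at all.
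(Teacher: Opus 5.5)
Your proposal is essentially the paper's proof. It makes the same reduction to an integral $D=\sum_ia_iE_i$ via \cref{lem:flipindependence}, covers $\tilde X_R$ in the same way (an open set over which $g_R$ is an isomorphism, together with the sets $\tilde U_{p,R}$), and transfers finite generation from the toric flip of \cref{lem:localflipfan} by flat base change along $\tilde\phi_{p,R}$.

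\textbf{Smallness of $g_{R_p}$.} This is the one step you should tighten. Since $\tilde\phi_p$ is not surjective, the equality $\Exc(g_R)\cap\tilde U_p=\tilde\phi_p^{-1}(\Exc(g_{R_p}))$ together with ``\'etale maps preserve codimension'' only bounds the codimension of those components of $\Exc(g_{R_p})$ that meet the image of $\tilde\phi_p$. The paper closes this by invoking \cref{thm:sato}, which shows that $\Exc(g_{R_p})=V(\tau_{R_p})$ is irreducible. Alternatively, every component of $\Exc(g_{R_p})$ is torus-invariant, hence meets $\pi_{Y_p}^{-1}(0)\subseteq\tilde\phi_p(\tilde U_p)$.

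\textbf{Citation for $E_{i_0}\cdot R<0$.} This inequality follows from \cref{thm:cone} and \cref{def:elementary}, not from \cref{thm:num}. In any case the paper only needs that the classes $[E_i]$ span $N^1(\tilde X/X)$.
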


\begin{proof}
	By \cref{lem:flipindependence}, we may replace $D$ by an integral divisor $D=\sum_i a_iE_i$ with $D\cdot R<0$.
	Such a divisor exists because the classes $[E_i]$ span $N^1(\tilde X/X)$.

	Set
	\[
		\mathcal A\coloneq
		\bigoplus_{m\ge0}(g_R)_*\Ob{\tilde X}(mD).
	\]
	By \cref{rem:flipcriterion}, it is enough to show that $\mathcal A$ is a finitely generated $\Ob{\tilde X_R}$-algebra.
	Finite generation is local on $\tilde X_R$.
	Over $X_0\coloneq X\setminus\pi(\Exc(g_R))$, the morphism $g_R$ is an isomorphism.
	Thus, for $\tilde X_{R,0}\coloneq\tilde X_R\times_XX_0$, the restriction $\restr{\mathcal A}{\tilde X_{R,0}}$ is the symmetric algebra of an invertible sheaf and is therefore finitely generated.

	Let $p\in\pi(\Exc(g_R))$ be a closed point, and choose a coordinate model $\pi_{Y_p}\colon\tilde Y_p\to\AA^d$ together with $\phi_p\colon U_p\to\AA^d$.
	Let $\iota_p$ be the injection in \cref{lem:NEinj}, and set
	\[
		R_p\coloneq\iota_p^{-1}(R)\subseteq\NEb(\tilde Y_p/\AA^d),
		\qquad
		D_p\coloneq\sum_{i=1}^n a_iF_i\quad\text{on }\tilde Y_p.
	\]
	By \cref{lem:localcontract}, $R_p$ is an extremal ray satisfying $\iota_p(R_p)=R$.
	Let $g_{R_p}\colon\tilde Y_p\to\tilde Y_{p,R}$ be its contraction.
	Since $\iota_p$ preserves intersection vectors, we have $D_p\cdot R_p<0$.
	We show that $g_{R_p}$ is small, so that \cref{lem:localflipfan} gives its $D_p$-flip.
	Indeed, \cref{lem:localcontract} gives
	\[
		\Exc(g_R)\cap\tilde U_p=\tilde\phi_p^{-1}(\Exc(g_{R_p})).
	\]
	The left-hand side is nonempty since $p\in\pi(\Exc(g_R))$, and has codimension at least two in $\tilde U_p$ since $g_R$ is small.
	By \cref{thm:sato}, $\Exc(g_{R_p})$ is irreducible.
	Its nonempty inverse image under the \'etale morphism $\tilde\phi_p$ therefore has the same codimension, so $\Exc(g_{R_p})$ has codimension at least two in $\tilde Y_p$.
	Thus $g_{R_p}$ is small, and \cref{lem:localflipfan} yields its $D_p$-flip.
	Equivalently, by \cref{rem:flipcriterion}, the graded algebra
	\[
		\mathcal A_p\coloneq
		\bigoplus_{m\ge0}(g_{R_p})_*\Ob{\tilde Y_p}(mD_p)
	\]
	is finitely generated.

	Set $\tilde U_{p,R}\coloneq\tilde X_R\times_XU_p$.
	By \cref{prop:localmodel}, $\restrdiv D{\tilde U_p}=\tilde\phi_p^*D_p$.
	By the cartesian diagram in \cref{lem:localcontract} and since $\tilde\phi_{p,R}$ is flat, flat base change in each degree gives an isomorphism of graded algebras
	\[
		\restr{\mathcal A}{\tilde U_{p,R}}
		\isom\tilde\phi_{p,R}^*\mathcal A_p.
	\]
	Thus $\restr{\mathcal A}{\tilde U_{p,R}}$ is finitely generated.
	As $p$ varies over the closed points of $\pi(\Exc(g_R))$, the open subsets $\tilde X_{R,0}$ and $\tilde U_{p,R}$ cover $\tilde X_R$.
	Hence $\mathcal A$ is finitely generated, and \cref{rem:flipcriterion} gives the $D$-flip $\tilde X^+\coloneq\Proj_{\tilde X_R}\mathcal A$.
\end{proof}

We next determine when this flipped model can be obtained by changing the order of the blowups.

\begin{thm}\label{thm:reorderflip}
	In \cref{set:main}, let $R$ be an extremal ray of $\NEb(\tilde X/X)$ whose contraction $g_R\colon\tilde X\to\tilde X_R$ is small.
	Let $\sigma\in\mathfrak S_n$ be a permutation for which the reordered collection
	\[
		Z_{\sigma(1)},\dotsc,Z_{\sigma(n)}
	\]
	satisfies the assumptions of \cref{set:main}; equivalently, $Z_{\sigma(h)}\not\supseteq Z_{\sigma(i)}$ whenever $h<i$.
	Let $\pi^\sigma\colon \tilde X^\sigma\to X$ be the successive blowup in this order.
	For each $i$, let $E_i^\sigma\subseteq \tilde X^\sigma$ denote the exceptional divisor corresponding to $Z_i$, and let
	\[
		\varphi\colon\tilde X\dashrightarrow \tilde X^\sigma
	\]
	be the induced birational map over $X$.
	Let $D$ and $H$ be $\RR$-Cartier divisors on $\tilde X$ such that $D\cdot R<0$ and $H$ supports $R$, and let $D^\sigma$ and $H^\sigma$ denote their strict transforms on $\tilde X^\sigma$, respectively.
	Then $D^\sigma$ and $H^\sigma$ are $\RR$-Cartier, and the following are equivalent.
	\begin{enumerate}
		\item\label{item:reordernef}
			The divisor $H^\sigma$ is $\pi^\sigma$-nef, and $\varphi$ is not an isomorphism.
		\item\label{item:reorderflip}
			The rational map $g_R\circ\varphi^{-1}\colon \tilde X^\sigma\dashrightarrow\tilde X_R$ extends to a morphism
			\[
				g_R^\sigma\colon \tilde X^\sigma\longrightarrow\tilde X_R,
			\]
			and $g_R^\sigma$ is the $D$-flip of $g_R$.
	\end{enumerate}
\end{thm}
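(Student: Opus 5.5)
Our approach is to compare $\tilde X$ and $\tilde X^\sigma$ through the common base $X$: both are normal and projective over $X$, and $\varphi$ is an isomorphism over $X\setminus\bigcup_iZ_i$. The $\RR$-Cartier assertion follows from \cref{lem:divisordecomposition} applied to $\tilde X^\sigma$. Indeed, by $\RR$-linearity we write $D=\pi^*D_0+\sum_ia_iE_i$ with $D_0$ $\RR$-Cartier on $X$. The strict transform of $E_i$ is $E_i^\sigma$, since both are the unique exceptional prime divisor over the generic point of $Z_i$ (compare the valuation argument in \cref{lem:valuation}). Hence $D^\sigma=(\pi^\sigma)^*D_0+\sum_ia_iE_i^\sigma$, which is $\RR$-Cartier, and the same holds for $H^\sigma$. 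Since both $\tilde X$ and $\tilde X^\sigma$ are isomorphic in codimension one over $X$ (their exceptional divisors correspond), $\varphi$ is a small birational map. By \cref{lem:flipindependence} we may also replace $D$ by any divisor negative on $R$, so only $H$ really matters.

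For (2)$\Rightarrow$(1), $g_R^\sigma$ is a $D$-flip, so $\varphi$ is not an isomorphism: otherwise $D^\sigma$ would be both $g_R$-anti-ample and $g_R^\sigma$-ample on the same ray. Write $H=g_R^*L$ numerically over $X$ for some $L$ on $\tilde X_R$ that is ample over $X$; this uses \cref{cor:contract}, and $L$ can be chosen with $H\equiv g_R^*L$ because $H$ is a supporting divisor. Then $H^\sigma\equiv(g_R^\sigma)^*L$, because strict transforms commute with pullback when the maps are small and $H-g_R^*L$ is a combination of $\pi^*$ of something and is numerically trivial. Thus $H^\sigma$ is $\pi^\sigma$-nef.

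For (1)$\Rightarrow$(2), we apply \cref{thm:cone,thm:bpf} to the reordered tower. Since $H^\sigma$ is $\pi^\sigma$-nef, after scaling to an integral representative $-\sum a_iE_i^\sigma$ it is $\pi^\sigma$-base-point-free. By \cref{thm:targetn} applied to both towers, the contraction it defines is $\overline{\Bl}_{\Ic^{(a)}}X$, with the same vector $a$ as for $H$ on $\tilde X$. That target is $\tilde X_R$, giving the morphism $g_R^\sigma$ extending $g_R\circ\varphi^{-1}$. The morphism $g_R^\sigma$ is birational and small, since its exceptional divisors would be among the $E_i^\sigma$, yet all of these survive on $\tilde X_R$ because $g_R$ is small.

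It remains to show that $D^\sigma$ is $g_R^\sigma$-ample. The relative cone $\NEb(\tilde X^\sigma/\tilde X_R)$ is a face of $\NEb(\tilde X^\sigma/X)$, cut out by $H^\sigma$, so it is a rational polyhedral face. We must show that $D^\sigma$ is positive on it. This is the main obstacle, and we handle it locally. Working over a coordinate model at $p\in\pi(\Exc(g_R))$ (\cref{lem:localcontract}), both towers become toric, and the toric flip of \cref{lem:localflipfan} is the unique small modification $Y^+$ over $\tilde Y_{p,R}$ with relative Picard number one on which $D_p$ is positive. From the relative Picard number count of \cref{cor:contract}, $\rho(\tilde X^\sigma/\tilde X_R)=\rho(\tilde X^\sigma/X)-\rho(\tilde X_R/X)=1$. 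So the face is a single ray $R^\sigma$, and $D^\sigma$ has a constant sign on it. We rule out $D^\sigma\cdot R^\sigma\le0$ as follows. If $D^\sigma\cdot R^\sigma<0$, then $\tilde X^\sigma$ and $\tilde X$ would both be $\operatorname{Proj}$ of the algebra of $-D$ over $\tilde X_R$ (\cref{rem:flipcriterion}), forcing $\varphi$ to be an isomorphism, a contradiction. The case $D^\sigma\cdot R^\sigma=0$ is impossible since the ray is spanned by a contracted curve and $D^\sigma,H^\sigma$ together span $N^1(\tilde X^\sigma/\tilde X_R)$. Hence $D^\sigma$ is $g_R^\sigma$-ample, and $g_R^\sigma$ is the $D$-flip.
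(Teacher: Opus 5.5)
Your overall route is the paper's: $\RR$-Cartierness via \cref{lem:divisordecomposition} and $E_i\leftrightarrow E_i^\sigma$; extending $g_R\circ\varphi^{-1}$ by applying \cref{thm:targetn} to both orderings; $\rho(\tilde X^\sigma/\tilde X_R)=1$ from \cref{cor:contract}; ruling out that $-D^\sigma$ is $g_R^\sigma$-ample because both models would then be the same $\Proj$; and the converse by writing $H$ as a pullback from $\tilde X_R$. The local toric digression in your last paragraph is never used. However, two steps do not work as written.

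\textbf{Integrality of $H$.} In (1)$\Rightarrow$(2) you make $H^\sigma$ integral ``after scaling''. Since $H$ is only $\RR$-Cartier, the coefficients in $H=\pi^*H_0-\sum_ia_iE_i$ may be irrational, and then no scaling makes them integral. Yet \cref{thm:bpf,thm:targetn} require an integral vector $a$. You must first replace $H$ by a rational divisor that still supports $R$ and whose strict transform is still $\pi^\sigma$-nef; this is harmless, since (2) does not involve $H$. The paper does this as follows. It regards the finitely many integral generators of $\NEb(\tilde X/X)$ and of $\NEb(\tilde X^\sigma/X)$ given by \cref{thm:cone} as integral linear forms in $a$. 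It then chooses a rational point near $a$ in the rational subspace cut out by the forms that vanish at $a$. This preserves every vanishing and every sign, after which one clears denominators; nonnegativity of the new $a_i$ comes from \cref{lem:fiberclass}.

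\textbf{Nonvanishing of $D^\sigma$ on $R^\sigma$.} Your exclusion of $D^\sigma\cdot R^\sigma=0$ is circular. $H^\sigma$ is pulled back from $\tilde X_R$, so it is zero in $N^1(\tilde X^\sigma/\tilde X_R)$, and the assertion that $[D^\sigma]$ spans this space is exactly what needs proof. The missing argument is the one you already sketch in (2)$\Rightarrow$(1). First write any divisor numerically pulled back from $\tilde X_R$ over $X$ as an honest pullback $g_R^*L+\pi^*D_0$, using \cref{lem:divisordecomposition} and the linear independence of the $[E_i]$, as in \cref{lem:flipindependence}. Then use smallness of $g_R$ and $g_R^\sigma$: strict transform identifies $g_R^*N^1(\tilde X_R/X)$ with $(g_R^\sigma)^*N^1(\tilde X_R/X)$ inside $N^1(\tilde X/X)\isom N^1(\tilde X^\sigma/X)$. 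Hence it induces $N^1(\tilde X/\tilde X_R)\isom N^1(\tilde X^\sigma/\tilde X_R)$ with $[D]\mapsto[D^\sigma]$, and $[D^\sigma]\ne0$ because $D\cdot R<0$.
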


\begin{proof}
	For each $i$, the divisors $E_i$ and $E_i^\sigma$ define the same $\Ic_{Z_i}$-adic valuation of $k(X)$.
	All other prime divisors on either variety are strict transforms of prime divisors on $X$.
	Thus $\varphi$ is an isomorphism in codimension one and sends each $E_i$ to $E_i^\sigma$.
	By \cref{lem:divisordecomposition}, every $\RR$-Cartier divisor on $\tilde X$ is the sum of the pullback of an $\RR$-Cartier divisor on $X$ and an $\RR$-linear combination of the $E_i$.
	Taking strict transforms carries the pullback to the corresponding pullback on $\tilde X^\sigma$ and replaces each $E_i$ by the Cartier divisor $E_i^\sigma$.
	Hence $D^\sigma$ and $H^\sigma$ are $\RR$-Cartier.

	Since the classes $[E_i]$ span $N^1(\tilde X/X)$, we can choose an index $h$ such that $E_h\cdot R\ne0$.
	By \cref{lem:flipindependence}, we may therefore replace $D$ by $E_h$ or $-E_h$, choosing the sign so that $D\cdot R<0$.
	Then both $D$ and $D^\sigma$ are Cartier.

	After subtracting the pullback term in the decomposition of $H$, we may assume that $H=-\sum_i a_iE_i$ with $a_i\in\RR$, so that $H^\sigma=-\sum_i a_iE_i^\sigma$.
	By \cref{thm:cone}, the cones $\NEb(\tilde X/X)$ and $\NEb(\tilde X^\sigma/X)$ have finite sets of integral generators, say $\alpha_1,\dotsc,\alpha_r$ and $\alpha_{r+1},\dotsc,\alpha_{r+s}$, respectively.
	We regard these generators as integral linear forms via the intersection pairing:
	\begin{align*}
		\alpha_j&\colon \RR^n\longrightarrow\RR,\qquad
			a'=(a'_1,\dotsc,a'_n)\longmapsto-\sum_ia'_iE_i\cdot\alpha_j
			&& (1\le j\le r),\\
		\alpha_j&\colon \RR^n\longrightarrow\RR,\qquad
			a'=(a'_1,\dotsc,a'_n)\longmapsto-\sum_ia'_iE^\sigma_i\cdot\alpha_j
			&&(r+1\le j\le r+s).
	\end{align*}
	Set $a\coloneq(a_i)_i$.
	The linear subspace $\bigcap_{\alpha_j(a)=0}\Ker\alpha_j$ contains $a$ and is defined over $\QQ$, so its rational points are dense.
	Choosing a rational point sufficiently close to $a$ in this subspace preserves the vanishing of each $\alpha_j$ that vanishes at $a$ and the signs of all remaining $\alpha_j$.
	We may therefore replace $a$ by this point and then multiply by a positive integer to clear denominators without changing either condition in the statement.
	The resulting coefficients are nonnegative by \cref{lem:fiberclass} and the nefness of $H$.

	Assume \cref{item:reordernef}.
	The nef divisor $H^\sigma$ supports the face
	\[
		F^\sigma\coloneq(H^\sigma)^\perp\cap\NEb(\tilde X^\sigma/X).
	\]
	Let $\tilde X^\sigma\to\tilde X^\sigma_F$ be the contraction of $F^\sigma$.
	Applying \cref{thm:targetn} to the two orderings gives isomorphisms over $X$
	\[
		\tilde X_R
		\isom\overline{\Bl}_{\Ic^{(a)}}X
		\isom \tilde X^\sigma_F,
	\]
	where $\Ic^{(a)}=\bigcap_i\Ic_{Z_i}^{a_i}$.
	Thus the rational map $\tilde X^\sigma\dashrightarrow\tilde X_R$ extends to the contraction $g_R^\sigma\colon\tilde X^\sigma\to\tilde X_R$.

	Since $g_R$ is small and $\varphi$ is an isomorphism in codimension one, $g_R^\sigma$ is small.
	Since $\rho(\tilde X^\sigma/X)=n$ and $\rho(\tilde X_R/X)=n-1$, \cref{cor:contract} gives $\rho(\tilde X^\sigma/\tilde X_R)=1$.

	It remains to show that $D^\sigma$ is $g_R^\sigma$-ample.
	Taking strict transforms under $\varphi$ identifies $N^1(\tilde X/X)$ with $N^1(\tilde X^\sigma/X)$ by sending $[E_i]$ to $[E_i^\sigma]$.
	Since both contractions are small, this isomorphism identifies the subspaces pulled back from $N^1(\tilde X_R/X)$ and hence induces an isomorphism
	\[
		N^1(\tilde X/\tilde X_R)
		\isomarrow
		N^1(\tilde X^\sigma/\tilde X_R)
	\]
	sending $[D]$ to $[D^\sigma]$.
	Since $D\cdot R<0$, the class $[D^\sigma]$ is nonzero, so $\rho(\tilde X^\sigma/\tilde X_R)=1$ implies that either $D^\sigma$ or $-D^\sigma$ is $g_R^\sigma$-ample.
	If $-D^\sigma$ were $g_R^\sigma$-ample, then $\varphi$ would be an isomorphism because $-D$ is also $g_R$-ample, contrary to \cref{item:reordernef}.
	Hence $D^\sigma$ is $g_R^\sigma$-ample, proving \cref{item:reorderflip}.

	Conversely, assume \cref{item:reorderflip}.
	Since $-D$ is $g_R$-ample and $D^\sigma$ is $g_R^\sigma$-ample, the map $\varphi$ cannot be an isomorphism.
	By \cref{thm:targetn}, there is a Cartier divisor $A$ on $\tilde X_R$ that is ample over $X$ and satisfies $H=g_R^*A$.
	Since both contractions are small and $H^\sigma$ is the strict transform of $H$, it follows that $H^\sigma=(g_R^\sigma)^*A$.
	In particular, $H^\sigma$ is $\pi^\sigma$-nef, and \cref{item:reordernef} follows.
\end{proof}

\subsection{Examples of contractions and flops}\label{sec:examples}

We first recover the two-center construction of \cite{MasamuraYoshida} from the preceding results, and then describe two examples with three centers.

\subsubsection*{The two-center construction}

The following corollary recovers \cite[Theorem~3.7 and Corollary~3.8]{MasamuraYoshida}, including the description of the exceptional locus and the target.

\begin{cor}\label{cor:MY}\label{rem:MYlocus}
	Let $X$ be a smooth quasi-projective variety over $\mathbb C$ of dimension at least three, and let $Z_1,Z_2\subseteq X$ be smooth subvarieties of codimensions $c_1,c_2\ge2$.
	Assume that $Z_1\not\supseteq Z_2$, that the scheme-theoretic intersection $Z_1\cap Z_2$ is nonempty and smooth, and that every irreducible component $Z\subseteq Z_1\cap Z_2$ satisfies
	\[
		\codim_X Z<c_1+c_2.
	\]
	Let $\pi\colon\tilde X=X^{(2)}\xrightarrow{\pi_2}X^{(1)}\xrightarrow{\pi_1}X$ be the successive blowup along $Z_1$ and $Z_2^{(1)}$, and set $\tilde X_0\coloneq\Bl_{Z_1\cup Z_2}X$.
	Then the following hold.
	\begin{enumerate}
		\item\label{item:MYcontraction}
			The divisor $-E_1-E_2$ defines a birational contraction $g_R\colon\tilde X\to\tilde X_0$ over $X$ with $\rho(\tilde X/\tilde X_0)=1$.
			It contracts the ray $R=\RRp(-1,1)$, and its exceptional locus is
			\[
				\Exc(g_R)=(\pi_2)_*^{-1}\pi_1^{-1}(Z_1\cap Z_2).
			\]
		\item\label{item:MYtype}
			The contraction $g_R$ is small if and only if $Z_1\not\subseteq Z_2$, and $-K_{\tilde X}$ is $g_R$-ample if and only if $c_1>c_2$.
		\item\label{item:MYflip}
			Assume further that $Z_1\not\subseteq Z_2$, and let $\tilde X^\sigma$ be the successive blowup in the reverse order $Z_2,Z_1$.
			If $c_1>c_2$, then the induced map $\tilde X\dashrightarrow\tilde X^\sigma$ is the flip of $g_R$.
			If $c_1=c_2$, then it is a flop.
	\end{enumerate}
\end{cor}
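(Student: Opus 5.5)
The plan is to verify the hypotheses of \cref{set:main} and then read off each assertion from the general results. Since $Z_1,Z_2$ are smooth with smooth scheme-theoretic intersection, at each point of $Z_1\cap Z_2$ the tangent spaces satisfy $T(Z_1\cap Z_2)=TZ_1\cap TZ_2$, so regular parameters can be chosen with $\Ic_{Z_1}=(x_j\mid j\in J_1)$ and $\Ic_{Z_2}=(x_j\mid j\in J_2)$. Hence the centers have simple normal crossings, and $Z_1\not\supseteq Z_2$ gives \cref{set:main}. At $p\in Z_1\cap Z_2$, a component $Z$ through $p$ has $\codim Z=\card{J_1\cup J_2}$, so the hypothesis $\codim_X Z<c_1+c_2$ means $J_1\cap J_2\ne\emptyset$. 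By \cref{prop:eight} in its $n=2$ form, the only possible generators are $(-1,0)$, $(-1,1)$ and $(0,-1)$. Since $(-1,0)=(-1,1)+(0,-1)$, the class $(-1,1)$ must occur for the ray $R=\RRp(-1,1)$ to be extremal. I would realize it in a coordinate model: take a line in $\pi_1^{-1}(p)\isom\PP^{c_1-1}$ meeting the linear subspace $Z_2^{(1)}\cap\pi_1^{-1}(p)=V(y_j\mid j\in J_1\cap J_2)$, which is nonempty and proper because $J_1\cap J_2\neq\emptyset$ and $J_1\not\subseteq J_2$ fails only when $Z_2\supseteq Z_1$. Then $\NEb(\tilde X/X)=\RRp(-1,1)+\RRp(0,-1)$, and $H=-E_1-E_2$ is nef, vanishes exactly on $R$, and has value $1$ on $(0,-1)$.

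By \cref{cor:contract}, $g_R$ exists with $\rho(\tilde X/\tilde X_R)=1$. By \cref{thm:targetn} with $a=(1,1)$, the target is $\overline{\Bl}_{\Ic_{Z_1}\cap\Ic_{Z_2}}X=\overline{\Bl}_{Z_1\cup Z_2}X$. To identify it with $\tilde X_0$, I would prove normality of $\Bl_{Z_1\cup Z_2}X$ in the coordinate model, using \cref{lem:localcontract} together with étale base change: the blowup of a union of two coordinate subspaces is a toric variety whose fan is the normal fan of the Newton polyhedron of the monomial ideal, so it is normal. This is the step I expect to be the main obstacle. The Rees algebra of $(x_{J_1})\cap(x_{J_2})=(x_ix_j\mid i\in J_1\setminus J_2,\ j\in J_2\setminus J_1)+(x_l\mid l\in J_1\cap J_2)$ must be checked to be integrally closed. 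The fallback is to cite \cite[Theorem~3.7]{MasamuraYoshida}, as the introduction does.

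For the exceptional locus I would use \cref{prop:locus}. Here $I^-(R)=\{1\}$, and $J^-_p(R)$ consists of the $j$ with $-\sum_{i\in I_j}E_i\cdot R<0$, namely $j\in J_2\setminus J_1$. So locally $\Exc(g_R)=E_1\cap\bigcap_{j\in J_2\setminus J_1}\tilde H_j$. I would identify this with the strict transform of $\pi_1^{-1}(Z_1\cap Z_2)$, which is the locus in $E_1$ over $Z_1\cap Z_2$ cut out by $x_j=0$ for $j\in J_2\setminus J_1$. For \cref{item:MYtype}, \cref{cor:typen} says $g_R$ is divisorial if and only if $J_2\setminus J_1=\emptyset$, that is, if and only if $Z_1\subseteq Z_2$. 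Also $K_{\tilde X}\cdot R=(c_1-1)(-1)+(c_2-1)=c_2-c_1$, which is negative exactly when $c_1>c_2$.

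For \cref{item:MYflip} I would apply \cref{thm:reorderflip} with $\sigma$ the transposition, which is admissible since $Z_2\not\supseteq Z_1$. By symmetry of the argument above, $H^\sigma=-E_1^\sigma-E_2^\sigma$ is $\pi^\sigma$-nef. Moreover, $\varphi$ is not an isomorphism, because the class $(-1,1)$ on $\tilde X$ would have to be effective on $\tilde X^\sigma$, where the cone is $\RRp(1,-1)+\RRp(-1,0)$. Taking $D=-K_{\tilde X}$ when $c_1>c_2$ gives the flip. When $c_1=c_2$, taking any $D$ negative on $R$ shows that $\varphi$ is a $K$-trivial flip, hence a flop.
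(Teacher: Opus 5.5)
Your outline follows the paper's proof nearly step for step. Clean intersection gives \cref{set:main}. The conditions $J_1\cap J_2\ne\emptyset$ and $J_1\setminus J_2\ne\emptyset$ produce a line of class $(-1,1)$ over each $p\in Z_1\cap Z_2$, so $\NEb(\tilde X/X)=\RRp[(-1,1),(0,-1)]$ (cite \cref{lem:fiberclass} for $(0,-1)$). Then \cref{thm:targetn}, \cref{prop:locus}, \cref{cor:typen} and \cref{thm:reorderflip} give the rest, with normality of $\Bl_{Z_1\cup Z_2}X$ taken from \cite{MasamuraYoshida}.

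One step fails as written. For $c_1>c_2$ you take $D=-K_{\tilde X}$, which has $D\cdot R=c_1-c_2>0$, so \cref{thm:reorderflip} does not apply to it. Take $D=K_{\tilde X}$ instead. It is Cartier with $D\cdot R=c_2-c_1<0$. Since $\varphi$ is an isomorphism in codimension one, its strict transform is $K_{\tilde X^\sigma}$, and the theorem makes this $g_R^\sigma$-ample. That is exactly the statement that $\tilde X^\sigma$ is the flip.

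Your proposed replacement for the citation is also flawed as stated. The toric variety of the normal fan of the Newton polyhedron of a monomial ideal $I$ is the \emph{normalized} blowup, which \cref{thm:targetn} already gives you. The blowup itself is normal when all powers $I^m$ are integrally closed; you flag this check but do not carry it out. It does go through here. Set $A=J_1\cap J_2$, $B=J_1\setminus J_2$, $C=J_2\setminus J_1$ and $u(S)\coloneq\sum_{j\in S}u_j$. The Newton polyhedron of $I=(x_A)+(x_B)(x_C)$ is $\{u\ge0\mid u(A)+\min(u(B),u(C))\ge1\}$. A lattice point $u$ of its $m$-th dilate satisfies $x^u\in(x_A)^k\bigl((x_B)(x_C)\bigr)^{m-k}\subseteq I^m$ for $k=\max\{0,m-\min(u(B),u(C))\}$. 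Normality then transfers to $\Bl_{Z_1\cup Z_2}X$, because blowups commute with flat base change and a variety \'etale over a normal one is normal. \cref{lem:localcontract} is not needed for this.

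Two smaller slips. First, $J_1\subseteq J_2$ means $Z_1\supseteq Z_2$, not $Z_2\supseteq Z_1$; it is $Z_1\supseteq Z_2$ that the hypothesis excludes. Second, for the exceptional locus, the conditions $x_j=0$ must be read as the strict transforms $\tilde H_j$, since $\pi^*H_j=\tilde H_j+E_2$ on $\tilde U_p$ for $j\in J_2\setminus J_1$. The identification with $(\pi_2)_*^{-1}\pi_1^{-1}(Z_1\cap Z_2)$ then uses, as in the paper, that $W_p=\bigcap_{j\in J_2\setminus J_1}H_j$ meets $Z_1$ transversally along $Z_1\cap Z_2$. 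This gives $E_1^{(1)}\cap W_p^{(1)}=\pi_1^{-1}(Z_1\cap Z_2)$ near $p$. The global equality also needs $\pi(\Exc(g_R))=Z_1\cap Z_2$, which your curve of class $(-1,1)$ over every point of $Z_1\cap Z_2$ supplies.
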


\begin{proof}
	Since $X$, $Z_1$, $Z_2$, and $Z_1\cap Z_2$ are smooth, we can choose local coordinates in which both center ideals are coordinate ideals.
	Thus the pair satisfies \cref{set:main}.
	Let $p\in Z_1\cap Z_2$ be a closed point.
	The codimension assumption gives $J_1\cap J_2\ne\emptyset$, while $Z_2\not\subseteq Z_1$ gives $J_1\setminus J_2\ne\emptyset$.
	Hence $Z_2^{(1)}$ meets $\pi_1^{-1}(p)\isom\PP^{c_1-1}$ in a nonempty proper linear subspace of dimension $\card{J_1\setminus J_2}-1$.
	The strict transform of a line meeting this subspace at one point has intersection vector $(-1,1)$.
	The only possible elementary vectors are $(-1,0)$, $(-1,1)$, and $(0,-1)$.
	Since $(-1,0)=(-1,1)+(0,-1)$, \cref{thm:cone,lem:fiberclass} give
	\[
		\NEb(\tilde X/X)=\RRp[(-1,1),(0,-1)].
	\]
	Thus the divisor $H=-E_1-E_2$ supports $R$, and \cref{thm:bpf,cor:contract} give a contraction of $R$ with relative Picard number one.

	By \cref{thm:targetn} with $a=(1,1)$, the target is the normalization of $\tilde X_0=\Bl_{Z_1\cup Z_2}X$.
	The variety $\tilde X_0$ is normal by the proof of \cite[Theorem~3.7]{MasamuraYoshida}, so it is the target of $g_R$.

	To identify the exceptional locus, choose a coordinate neighborhood $U_p$ of a closed point $p\in Z_1\cap Z_2$ and set $W_p=\bigcap_{j\in J_2\setminus J_1}H_j$.
	The subvariety $W_p$ meets $Z_1\cap U_p$ transversely along $Z_1\cap Z_2\cap U_p$, so
	\[
		E_1^{(1)}\cap W_p^{(1)}=\pi_1^{-1}(Z_1\cap Z_2\cap U_p).
	\]
	Taking strict transforms under $\pi_2$ and applying \cref{prop:locus} with $I^-(R)=\{1\}$ and $J^-_p(R)=J_2\setminus J_1$, we obtain
	\begin{align*}
		\Exc(g_R)\cap\tilde U_p
		&=(E_1\cap\tilde U_p)\cap\bigcap_{j\in J_2\setminus J_1}\tilde H_j\\
		&=(\pi_2)_*^{-1}\pi_1^{-1}(Z_1\cap Z_2)\cap\tilde U_p.
	\end{align*}
	There are no curves with class in $R$ over $X\setminus(Z_1\cap Z_2)$.
	These local equalities therefore prove \cref{item:MYcontraction}.

	By \cref{cor:typen}, the contraction is divisorial if and only if $J_2\setminus J_1=\emptyset$ at some point of $Z_1\cap Z_2$, which is equivalent to $Z_1\subseteq Z_2$.
	Moreover, the blowup formula gives
	\[
		K_{\tilde X}\cdot(-1,1)=-(c_1-1)+(c_2-1)=c_2-c_1.
	\]
	Since $\rho(\tilde X/\tilde X_0)=1$, this proves \cref{item:MYtype}.

	Finally, assume that $Z_1\not\subseteq Z_2$.
	Both orders satisfy \cref{set:main}.
	Keeping the original indexing of the exceptional divisors, the same computation gives
	\[
		\NEb(\tilde X^\sigma/X)=\RRp[(1,-1),(-1,0)].
	\]
	Thus $H^\sigma=-E_1^\sigma-E_2^\sigma$ is nef over $X$.
	The induced map $\tilde X\dashrightarrow\tilde X^\sigma$ is not an isomorphism because the two cones differ under the identification given by the exceptional divisors.
	By \cref{thm:reorderflip}, this map is the $D$-flip of $g_R$ for every $\RR$-Cartier divisor $D$ with $D\cdot R<0$.
	If $c_1>c_2$, then taking $D=K_{\tilde X}$ gives the flip of $g_R$.
	If $c_1=c_2$, then $K_{\tilde X}$ is numerically trivial over $\tilde X_0$, and taking $D=E_1$ gives a flop.
	This proves \cref{item:MYflip}.
\end{proof}

\subsubsection*{Three-center examples}

We now compute contraction targets, exceptional loci, and flops in two examples with three centers.
The first flop cannot be obtained by reordering the centers, whereas the second can.

\begin{exa}\label{exa:branch}
	In \cref{set:main}, let $X=\AA^5_{x_1,\dotsc,x_5}$ and
	\[
		Z_1=V(x_1,x_2,x_3),\qquad
		Z_2=V(x_2,x_4),\qquad
		Z_3=V(x_3,x_5).
	\]
	By the wall relations and \cref{thm:toriccone},
	\[
		\NEb(\tilde X/X)=\Cone((-1,1,1),(0,-1,0),(0,0,-1)).
	\]
	The remaining wall classes are $(-1,1,0)$ and $(-1,0,1)$, each a sum of two of these generators.
	Thus the ray $R\coloneq\RRp(-1,1,1)$ is extremal.
	The divisor $H\coloneq-2E_1-E_2-E_3$ supports $R$, so \cref{cor:contract,thm:targetn} give the contraction
	\[
		g_R\colon\tilde X\longrightarrow\tilde X_R
		\isom\overline{\Bl}_{\Ic_{Z_1}^{2}\cap\Ic_{Z_2}\cap\Ic_{Z_3}}X.
	\]

	For this ray, $I^-(R)=\{1\}$ and $J^-_0(R)=\{4,5\}$.
	Hence \cref{prop:locus} gives
	\[
		\Exc(g_R)=\PP(N_{Z_1/X,0}^{\vee})\isom\PP^2,
		\qquad
		\pi(\Exc(g_R))=Z_1\cap Z_2\cap Z_3=\{0\}.
	\]
	To see this geometrically, each later center meets the strict transform of the first exceptional fiber in a line.
	Blowing up this line leaves the fiber unchanged, since the line is a Cartier divisor on it.
	Every line in the resulting $\PP^2$ has intersection vector $(-1,1,1)$, so $g_R$ contracts this $\PP^2$ to a point.
	The exceptional locus has codimension $3$ in $\tilde X$, so $g_R$ is small.

	Since $(c_1,c_2,c_3)=(3,2,2)$, the blowup formula gives
	\[
		K_{\tilde X}\cdot(-1,1,1)=-2+1+1=0.
	\]
	Thus $g_R$ is a flopping contraction.
	Since $E_1\cdot R<0$, its flop exists by \cref{thm:flip} with $D=E_1$.

	This flop cannot be obtained by reordering the centers.
	For a permutation $\sigma$, index the exceptional divisors by the original centers and put $H^\sigma=-2E_1^\sigma-E_2^\sigma-E_3^\sigma$.
	Suppose that $Z_2$ precedes $Z_1$.
	Choose a point $p$ in the nonempty set $(Z_1\cap Z_2)\setminus Z_3$.
	Near $p$, only the blowups along $Z_2$ and $Z_1$ occur.
	The first exceptional fiber is a line meeting the second center once, so its strict transform has intersection vector $(1,-1,0)$ in the original indexing.
	This curve has $H^\sigma$-degree $-1$.
	Similarly, if $Z_3$ precedes $Z_1$, then a point of the nonempty set $(Z_1\cap Z_3)\setminus Z_2$ gives a curve with intersection vector $(1,0,-1)$ and $H^\sigma$-degree $-1$.
	Thus $H^\sigma$ can be nef over $X$ only if $Z_1$ is first.

	After blowing up $Z_1$, the strict transforms of $Z_2$ and $Z_3$ meet transversely.
	Indeed, on the $x_1$-chart they are defined by $(x_2/x_1,x_4)$ and $(x_3/x_1,x_5)$, respectively; on each of the other two charts, one of them is empty.
	Their blowups therefore commute, so both orders with $Z_1$ first give a model isomorphic to $\tilde X$ over $X$.
	No ordering satisfies \cref{thm:reorderflip}~\cref{item:reordernef}, so none gives the flop.
\end{exa}

\begin{exa}\label{exa:contain}
	Consider the successive blowup $\tilde X\to X$ of \cref{exa:forced}.
	By the wall relations and \cref{thm:toriccone},
	\[
		\NEb(\tilde X/X)=\Cone((-1,1,-1),(0,-1,1),(0,0,-1)).
	\]
	The only remaining wall class is $(0,-1,0)=(0,-1,1)+(0,0,-1)$.
	Thus the ray $R\coloneq\RRp(-1,1,-1)$ is extremal.
	The divisor $H\coloneq-E_1-2E_2-E_3$ supports $R$, so \cref{cor:contract,thm:targetn} give the contraction
	\[
		g_R\colon\tilde X\longrightarrow\tilde X_R
		\isom\overline{\Bl}_{\Ic_{Z_1}\cap\Ic_{Z_2}^{2}\cap\Ic_{Z_3}}X.
	\]

	For this ray, $I^-(R)=\{1,3\}$ and $J^-_0(R)=\emptyset$.
	Hence \cref{prop:locus} gives
	\[
		\Exc(g_R)=E_1\cap E_3,\qquad
		\pi(\Exc(g_R))=Z_1\cap Z_2\cap Z_3=\{0\}.
	\]
	Let $C^{(2)}$ be the strict transform in $X^{(2)}$ of the fiber $\pi_1^{-1}(0)$.
	The normal bundle computation in \cref{exa:forced} gives
	\[
		\Exc(g_R)
		=\PP_{C^{(2)}}\Bigl(N_{Z_3^{(2)}/X^{(2)}}^{\vee}\Bigr\rvert_{C^{(2)}}\Bigr)
		\isom\PP(N_{Z_1/X,0}^{\vee})\times\PP(N_{Z_3/X,0}^{\vee})
		\isom\PP^1\times\PP^1.
	\]
	Lines in the first and second factors have intersection vectors $(-1,1,-1)$ and $(0,0,-1)$, respectively.
	Thus $g_R$ restricts to the projection onto the second factor $\PP(N_{Z_3/X,0}^{\vee})\isom\PP^1$.
	The exceptional locus has codimension $2$ in $\tilde X$, so $g_R$ is small.
	Moreover, $(c_1,c_2,c_3)=(2,3,2)$, and the blowup formula gives
	\[
		K_{\tilde X}\cdot(-1,1,-1)=-1+2-1=0.
	\]
	Thus $g_R$ is a flopping contraction.

	We obtain the flop by reordering the centers.
	Let $\pi^\sigma\colon\tilde X^\sigma\to X$ be the successive blowup in the order
	\[
		Z_2,\quad Z_1,\quad Z_3,
	\]
	and index its exceptional divisors by the original centers.
	The wall relations give
	\[
		\NEb(\tilde X^\sigma/X)
		=\Cone((-1,0,0),(0,0,-1),(1,-1,1)).
	\]
	The strict transform
	\[
		H^\sigma=-E_1^\sigma-2E_2^\sigma-E_3^\sigma
	\]
	is $\pi^\sigma$-nef and vanishes on the relative cone precisely along $\RRp(1,-1,1)$.
	Under the identification given by the exceptional divisors, the relative cones of $\tilde X$ and $\tilde X^\sigma$ are different.
	Hence the induced birational map is not an isomorphism over $X$.
	Since $E_1\cdot R<0$, \cref{thm:reorderflip} with $D=E_1$ gives a commutative diagram
	\[
	\begin{tikzcd}[column sep=small]
		\tilde X\arrow[rr,dashed]\arrow[dr,"g_R"'] && \tilde X^\sigma\arrow[dl,"g_R^\sigma"]\\
		& \tilde X_R &
	\end{tikzcd}
	\]
	in which $g_R^\sigma$ is the $E_1$-flip, and hence the flop, of $g_R$.
\end{exa}

\printbibliography

\end{document}